    \newcommand{\BC}{{\mathbb {C}}}
     \newcommand{\bW}{{\bf {W}}}
    \newcommand{\bx}{{\bf {x}}}   \newcommand{\bm}{{\bf {m}}}   \newcommand{\bn}{{\bf {n}}} \newcommand{\bt}{{\bf {t}}}
    \newcommand{\CO}{{\mathcal {O}}} \newcommand{\CP}{{\mathcal {P}}}
    \newcommand{\CS}{{\mathcal {S}}} 
    \newcommand{\CW}{{\mathcal {W}}}
       \newcommand{\fD}{{\mathfrak{D}}}
    \newcommand{\RG}{{\mathrm {G}}} \newcommand{\RH}{{\mathrm {H}}}
    \newcommand{\RU}{{\mathrm {U}}}
    \newcommand{\diag}{{\mathrm {diag}}}
      \newcommand{\Mat}{{\mathrm {Mat}}}
     \newcommand{\Nm}{{\mathrm {Nm}}}
    \newcommand{\lenth}{{\mathrm {\lenth}}}
     \newcommand{\GL}{{\mathrm{GL}}}
    \newcommand{\Hom}{{\mathrm{Hom}}} 
    \newcommand{\Ind}{{\mathrm{Ind}}} \newcommand{\ind}{{\mathrm{ind}}}
    \renewcommand{\Re}{{\mathrm{Re}}} 
    \newcommand{\Res}{{\mathrm{Res}}}
 \newcommand{\Vol}{{\mathrm{Vol}}}
\renewcommand{\mod}{\ \mathrm{mod}\ }
\newcommand{\supp}{\mathrm{supp}}
 \newcommand{\SO}{{\mathrm{SO}}}
 \newcommand{\tr}{{\mathrm{tr}}}
\newcommand{\vol}{{\mathrm{vol}}}  
\newcommand{\Char}{{\mathrm{Char}}}
 \newcommand{\Sp}{{\mathrm{Sp}}} \newcommand{\Herm}{{\mathrm{Herm}}}
    \newcommand{\pair}[1]{\langle {#1} \rangle}
    \newcommand{\wpair}[1]{\left\{{#1}\right\}}
    \newcommand{\incl}{\hookrightarrow}
     \newcommand{\ra}{\rightarrow}
    \theoremstyle{plain}
    \newtheorem*{theorem*}{Theorem}
    \newtheorem{thm}{Theorem}[section] \newtheorem{cor}[thm]{Corollary}
    \newtheorem{lem}[thm]{Lemma}  \newtheorem{prop}[thm]{Proposition}
    \newtheorem {conj}[thm]{Conjecture} 
     \newtheorem{rem}[thm]{Remark}
    \numberwithin{equation}{section}
\title{A local converse theorem for $\RU(2,2)$}
\author{Qing Zhang}
\subjclass[2010]{11F70, 22E50}
\keywords{gamma factors, Howe vectors, local converse theorem}
\address{Department of Mathematics, The Ohio State University,
100 Math Tower 231 West 18th Ave,
Columbus OH 43210}
\email{zhang.1649@osu.edu}
\begin{document}

\maketitle

\begin{abstract}
Let $F$ be a $p$-adic field and $E/F$ be a quadratic extension. In this paper, we prove the local converse theorem for generic representations of $\RU_{E/F}(2,2)$ if $E/F$ is unramified or the residue characteristic of $F$ is odd. Our method is purely local and analytic, and the same method also gives the local converse theorem for $\Sp_4(F)$ and $\widetilde \Sp_4(F)$ if the residue characteristic of $F$ is odd.
\end{abstract}

\setcounter{tocdepth}{1}

\section*{Introduction}

Let $F$ be a $p$-adic field and $E/F$ be a quadratic extension. Let $G=\RU_{E/F}(n,n)$, and $\psi$ be a generic character of its maximal unipotent subgroup. Let $\pi$ be a $\psi$-generic irreducible smooth representation of $G$ and $\tau$ be a generic irreducible representation of $\GL_m(E)$, then one can define a gamma factor $\gamma(s,\pi\times \tau,\psi)$. The local converse problem asks if one can determine the representation $\pi$ uniquely if one knows enough information of the gamma factors $\gamma(s,\pi\times \tau,\psi)$ for various twists. More precisely, we have the following conjecture
\begin{conj}[Local converse conjecture, see Conjecture 6.3 of \cite{JngN}] \label{conj1}
Let $\pi, \pi'$ be two $\psi$-generic irreducible admissible representations of $G(F)$ with the same central character. If $\gamma(s,\pi\times \tau, \psi)=\gamma(s,\pi'\times \tau,\psi)$ for all irreducible supercuspidal representations $\tau$ of $\GL_k(E)$ with $k\le n$, then $\pi_1\cong \pi_2.$
\end{conj}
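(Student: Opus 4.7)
The plan is to follow the Howe vector method that has been successfully applied to establish local converse theorems for $\GL_n$, $\RU_{E/F}(1,1)$, and groups of $\Sp_4$ / $\GSp_4$ type. For a $\psi$-generic representation $\pi$ of $G=\RU_{E/F}(2,2)$, I would first construct a family of Howe vectors $v_m$ attached to a tower of congruence subgroups $H_m \subset G$: starting from a vector $v$ with Whittaker functional $\lambda(v)=1$, set $v_m = \int_{H_m} \psi_m(h)^{-1} \pi(h) v\, dh$, where $\psi_m$ is a character of $H_m$ extending $\psi$ on the unipotent part. The key properties to establish are $W_{v_m}(1)=1$, the equivariance $W_{v_m}(ug)=\psi(u)W_{v_m}(g)$ for $u$ in an appropriate neighborhood of the identity in the unipotent radical, and the fact that on each Bruhat-type cell the support of $W_{v_m}$ is forced into a shrinking neighborhood as $m\to\infty$.

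Next, I would set up the local Rankin--Selberg / Shahidi integrals $\Psi(s, W_{v_m}, f_s)$ attached to twists of $\pi$ by a supercuspidal $\tau$ of $\GL_k(E)$ with $k \in \{1,2\}$. By choosing $f_s$ with support on the open Bruhat cell and level matched to $v_m$, the integral collapses to an explicit constant times a local factor. Applying the functional equation defining $\gamma(s,\pi \times \tau,\psi)$, the dual integral becomes, for $m$ large enough, an integral of $W_{v_m}$, translated by a Weyl element, against a $\psi$-twisted matrix coefficient of $\tau$ over a specific unipotent subgroup. The hypothesis $\gamma(s,\pi\times\tau,\psi)=\gamma(s,\pi'\times\tau,\psi)$ for all supercuspidal $\tau$ with $k\le 2$, together with a Mellin/Fourier-type inversion obtained by varying $\tau$, should force the corresponding integrals for $\pi$ and $\pi'$ to agree. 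A sectional bootstrapping argument then propagates the identity $W_{v_m}^{\pi}=W_{v_m}^{\pi'}$ from a tiny neighborhood of $1$ out to the full open Bruhat cell, yielding $\CW(\pi,\psi)=\CW(\pi',\psi)$ and hence $\pi\cong\pi'$ by uniqueness of Whittaker models. The same framework should apply verbatim to $\Sp_4(F)$ and $\wt\Sp_4(F)$, replacing the hermitian Siegel parabolic and its Levi by the symplectic (resp.\ metaplectic) analogue.

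The main obstacle will be the $k=2$ step: extracting pointwise information on $W_{v_m}$ from the gamma-factor identity for $\GL_2(E)$ twists requires a careful analysis of the partial Bessel integral attached to the long Weyl element of $G$. Concretely, after the functional equation one must verify that the dual integral reduces to $W_{v_m}$ evaluated on a single Weyl translate of a unipotent subgroup, and that the space of $\psi$-twisted matrix coefficients of supercuspidals of $\GL_2(E)$ is rich enough for the inversion argument to pin down $W_{v_m}$ on that slice. The restriction that $E/F$ be unramified or that the residue characteristic of $F$ be odd is almost certainly imposed at this step, to control the conductor of the quadratic character $\omega_{E/F}$ and to ensure that the Howe congruence filtration interacts cleanly with the Weyl conjugation; the ramified, even residue characteristic case would introduce wildly ramified contributions that obstruct the clean collapse of the integral against $v_m$.
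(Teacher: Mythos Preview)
The statement you are addressing is a \emph{conjecture}; the paper does not prove it in general, only the case $n=2$ (Theorem~\ref{thm56}), and your proposal is visibly aimed at that case. With that understood, your outline is essentially the paper's strategy, so let me just flag where your sketch diverges from what the paper actually does.

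First, two structural points. The local zeta integrals for $\RU(2,2)\times\GL_k(E)$ are Shimura-type integrals involving the Weil representation $\omega_{\mu,\psi^{-1}}$ acting on $\CS(E)$ (for $k=1$) or $\CS(E^2)$ (for $k=2$); a Schwartz function $\phi$ is part of the data, and in the $\GL_2$ step the paper makes a very specific choice $\Phi^k$ (Proposition~\ref{prop51}, Lemma~\ref{lemma52}) so that the Weil-representation factor is controlled. Your proposal omits this entirely, but it is not optional: without it the integrals do not collapse cleanly. Second, the Howe vectors are obtained by averaging over $U_m=U\cap H_m$, not over $H_m$; the transformation property under the full $H_m$ is then a consequence (Lemma~\ref{lemma21}).

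On the $\GL_2$ step, the inversion device is not a generic ``Mellin/Fourier-type inversion obtained by varying $\tau$'' but the Jacquet--Shalika lemma (Proposition~\ref{prop53}): one shows the relevant function of $a\in\GL_2(E)$ has compact support modulo $N^{(2)}$ on each determinant shell (Lemma~\ref{lemma54}) and then applies the lemma. Also, the paper twists by \emph{all} irreducible $\tau$, not just supercuspidal ones, because multiplicativity of these gamma factors is not established in the unitary case; your restriction to supercuspidals would leave a gap unless you supply that multiplicativity.

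Finally, your ``sectional bootstrapping'' is in the paper a precise Bruhat cell-by-cell argument (Proposition~\ref{prop25} and Corollary~\ref{cor26}): one orders $\bW$ by length and shows, via explicit Chevalley relations, that $t\dot w\,\bx_{\gamma}(r)\bx_{-\gamma}(-r^{-1})$ lands in a strictly lower cell. This requires a case check (the ``Claim $(*)$'' in the proofs of Propositions~\ref{prop25} and~\ref{prop42}), and the $\GL_1$ twist is used \emph{first} to nail down $W_{v_m}$ on $T\dot w_2$ (Proposition~\ref{prop41}), which in turn feeds into controlling $W_{v_m}$ on $\bm(a)\dot w_1 n$ for $n\notin N_m$ (Proposition~\ref{prop42}) \emph{before} the $\GL_2$ twist can be brought to bear. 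Your sketch reverses cause and effect here. The residue-characteristic/ramification restriction enters exactly where you guessed: one needs $\Nm_{E/F}(1+\CP_E^{m})=1+\CP_F^{m}$ (or its ramified analogue) to pass from $\beta(t)\in 1+\CP_F^m$ to $t\in Z\cdot(T\cap H_m)$, and this fails when $p=2$ and $E/F$ is ramified.
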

\noindent \textbf{Remark:} Conjecture 6.3 of \cite{JngN} is stated for all classical groups. Here for simplicity, we only consider the case $G=\RU_{E/F}(n,n)$. On the other hand, there is no central character assumption in Conjecture 6.3 of \cite{JngN}. In the $\GL_n$ case, the equality of the central character is in fact a consequence of the equality of the gamma factors twisting by $\GL_1$, see Corollary 2.7 of \cite{JngNS}. Thus it's natural to expect that this is also true in the classical group case.\\

In this paper, we confirm the above conjecture in the case when $n=2$ and $E/F$ is unramified or $E/F$ is ramified but the residue characteristic of $F$ is not 2. More precisely, we have the following
\begin{theorem*}[Local Converse Theorem for $\RU(2,2)$, Theorem \ref{thm56}]
Suppose that $E/F$ is unramified or $E/F$ is ramified but the residue characteristic of $F$ is odd. Let $\pi,\pi'$ be two $\psi$-generic irreducible admissible representations of $\RU_{E/F}(2,2)(F)$. If $\gamma(s,\pi\times \eta, \psi)=\gamma(s,\pi'\times \eta, \psi)$ and $\gamma(s,\pi\times \tau, \psi)=\gamma(s,\pi'\times \tau, \psi)$ for all quasi-characters $\eta$ of $E^\times$ and all irreducible admissible representations $\tau$ of $\GL_2(E)$, then $\pi\cong \pi'$.
\end{theorem*}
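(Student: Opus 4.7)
The plan is to follow the Howe-vector strategy pioneered by Baruch for $\GL_n$ and adapted to classical groups: the equality of gamma factors should force an equality of partial Bessel integrals which, after localization via Howe vectors, collapses to a pointwise equality of Whittaker functions cell by cell in the Bruhat decomposition of $\RU_{E/F}(2,2)$. Concretely, I would first fix integral representations for $\gamma(s,\pi\times\eta,\psi)$ and $\gamma(s,\pi\times\tau,\psi)$, where $\eta$ ranges over characters of $E^\times$ and $\tau$ over irreducible admissible representations of $\GL_2(E)$, and record the local functional equation relating the two sides of the long intertwiner.

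Next, for each integer $m\ge 1$, I would construct a Howe vector $v_m\in\pi$ whose Whittaker function $W_{v_m}$ equals a prescribed character $\psi_m$ on a large compact-open subgroup $H_m\subset\RU_{E/F}(2,2)$ and vanishes outside a controlled region. Plugging $v_m$ together with a Whittaker vector in the model of $\tau$ (or simply $\eta$) into the functional equation should produce, for $m$ large enough, an identity equating values of the Whittaker function of $\pi$ at a torus translate of one Weyl element with values at a translate of another. Doing the same with $\pi'$ and subtracting then yields pointwise equalities of the two Whittaker functions on successive Bruhat strata.

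The heart of the argument is a cell-by-cell induction along the Bruhat decomposition. I expect the twists by quasi-characters of $E^\times$ to suffice to control all Bruhat cells except the one indexed by the long Weyl element $w_\ell$; on the remaining cell, varying $\tau$ over irreducible admissible representations of $\GL_2(E)$ and invoking a Mellin-inversion / stability argument on $\GL_2(E)$ should isolate the individual Whittaker values at torus translates of $w_\ell$. Once equality is established on every cell, uniqueness of the Whittaker model will force $\pi\cong\pi'$.

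The hard part will be the long-cell identity in the ramified case. The unitary structure couples the diagonal torus with the nontrivial element of $\Gal(E/F)$, so the resulting exponential sums and zeta integrals contain factors that behave badly at the prime $2$; this is the source of the hypothesis that the residue characteristic of $F$ be odd when $E/F$ is ramified. A careful analysis of the support of $W_{v_m}$ on the open Bruhat cell, combined with the choice of a sufficiently ramified $\tau$ to kill boundary contributions, is what will make the argument go through under this hypothesis.
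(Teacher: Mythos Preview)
Your overall strategy is the right one and matches the paper's: Howe vectors, cell-by-cell control of the partial Bessel functions along the Bruhat decomposition, and uniqueness of Whittaker models at the end. However, two points in your outline are misaligned with what actually happens, and one of them is a genuine gap.

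First, the division of labor between the $\GL_1$ and $\GL_2$ twists is not as you describe. There are two length-three Weyl elements, $w_2=s_\alpha s_\beta s_\alpha$ and $w_1=s_\beta s_\alpha s_\beta$, in addition to the long element $w_0$. The cells of length $\le 2$ are handled with no gamma-factor input at all (only the Howe-vector relations and the equal central character). The $\GL_1$ twists then control precisely the cell $Bw_2B$; they do \emph{not} see $w_1$. The $\GL_2$ twist is what handles \emph{both} $w_1$ and $w_0$ simultaneously: after inserting Howe vectors and a carefully chosen section in $I(s,\tau)$ plus a Schwartz function for the Weil representation, the difference of the two functional equations reduces to a single integral over $N^{(2)}\backslash\GL_2(E)$, and the Jacquet--Shalika lemma on $\GL_2$ then forces a pointwise vanishing that gives equality on $Tw_1$ and $Tw_0$ at once. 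Your plan to use $\eta$ for everything but the long cell would leave $w_1$ uncontrolled, and the argument would not close.

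Second, you misplace the obstruction at residue characteristic $2$. It is not a long-cell phenomenon; it occurs at the very first step, on the torus. Showing $W_{v_m}(t)=W_{v'_m}(t)$ for $t\in T$ requires that whenever $\Nm_{E/F}(a_2)\in 1+\CP_F^m$ one can write $a_2\in E^1\cdot(1+\CP_E^{2m})$, which in turn needs $\Nm_{E/F}(1+\CP_E^{2m})=1+\CP_F^m$. For ramified $E/F$ this identity of norm groups holds iff the residue characteristic is odd; when $p=2$ the image is strictly smaller and the base case fails. So the hypothesis enters through local class field theory of the norm map, not through exponential sums on the open cell, and ``choosing a sufficiently ramified $\tau$'' does nothing to fix it.
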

The $\gamma$-factors used here are of Rankin-Selberg type, which are defined using Shimura type integrals. In \cite{Ka}, the Rankin-Selberg $\gamma$-factors for the group $\Sp_{2n}$ are studied. In particular, it is proved in \cite{Ka} that the Rankin-Selberg gamma factors are the same as the local gamma factors defined using Langlands-Shahidi method. Unfortunately, the unitary group case is not included in \cite{Ka}. Since the local zeta integrals used in the unitary case to define the gamma factors are totally parallel to the symplectic case, it is natural to believe that the gamma factors used here are multiplicative and are the same as the Langlands-Shahidi local gamma factors up to a normalizing factor. Once we know the gamma factors are multiplicative, it suffices to twist supercuspidal representations of $\GL_2(E)$ in the above local converse theorem.

We also mention that, based on the same methods, we can also prove the local converse theorems for $ \Sp_4(F)$ and $\widetilde \Sp_4(F)$ when $F$ is a $p$-adic field with odd residue characteristic.

Our proof of the local converse theorem is based on detailed analysis on the partial Bessel functions associated with Howe vectors. In \cite{Ba1, Ba2}, E. M. Baruch proved local converse theorem for $\RG\Sp_4$ and $\RU(2,1)$ using Howe vectors. In \cite{Zh2}, we proved the stability of the Rankin-Selberg gamma factors for $\Sp_{2n}$ and $\widetilde \Sp_{2n}$ using Baruch's methods, and remarked that this method might be used to prove the local converse theorem for $\Sp_{2n}$, $\widetilde \Sp_{2n}$ and $\RU(n,n)$ once we can extend a stability property of partial Bessel functions associated with Howe vectors (Theorem 3.11 of \cite{Zh2}) to the most general case. In this paper, we illustrate how to get such a local converse theorem in the small ranked case. In fact,  in the case $n=2$, the stability property of partial Bessel functions associated with Howe vectors can be checked directly because the Weyl group of $\RU(2,2)$ is small, see Proposition \ref{prop25}. We expect our method can be used to give local converse theorems for more general groups. \\

In this paper, we also construct a new local gamma factor $\gamma'(s,\pi\times \eta,\psi)$ for a generic representation $\pi$ of $\RU_{E/F}(2,2)$ and a quasi-character $\eta$ of $E^\times$. This new gamma factor is defined by Hecke type local zeta integrals, which are easier to handle than the Shimura type integrals. This construction can be extended to the case $\RU(n,n)\times \GL_m$ when $m<n$. But it is not known whether this new local zeta integrals come from global zeta integrals.\\

This paper is organized as follows. In $\S$1, we review the definition of $\gamma$-factors for $\RU_{E/F}(2,2)\times \GL_k(E)$ with $k\le 2$ after \cite{Ka}. In $\S$2, we review the definition of Howe vectors and a stability property of Howe vectors. We constructed some sections of induced representations in $\S$3 which will be used in the later calculation. In $\S$4 and $\S$5,  we consider the gamma factors twisting by $\GL_1$ and $\GL_2$ and finish the proof of the local converse theorem when $E/F$ is unramified. In $\S$6, we construct a new gamma factor $\gamma'(s,\pi\times \eta,\psi)$ for a generic representation $\pi$ of $\RU(2,2)$ and a quasi-character $\eta$ of $E^\times$. We also show that this new gamma factor can replace the old gamma factor in the local converse theorem. In $\S$7, we give a brief account of the proof of the local converse theorem in the case $E/F$ is ramified and the residue characteristic of $F$ is odd. In $\S$7, we consider the local converse theorem for $\Sp_4(F)$ and $\widetilde \Sp_4(F)$ when $F$ is a local field with odd residue characteristic. We also explain that the local converse theorem for $\Sp_4(F)$ is in fact true without the assumption on $F$ and the central character, based on the local Langlands correspondence for $\Sp_4$, \cite{GT2}.

\section*{Acknowledgements}
I would like to thank my advisor Professor James W. Cogdell for his constant encouragement, generous support and countless hours he spent on this work.  The influence of E. M. Baruch's thesis \cite{Ba1} on this paper should be evident for the reader. I would like to express my appreciation to Professor Baruch for his pioneer work.

\section*{Notations} Let $E/F$ be a quadratic extension of local fields and let $\epsilon_{E/F}$ be the local class field theory character of $F^\times$. Denote the nontrivial Galois action by $x\mapsto \bar x$ for $x\in E$. Let $\CO_E$ (resp. $\CO_F$) be the ring of integers of $E$ (resp. $F$), and let $\CP_E$ (resp. $\CP_F$) be the maximal idel of $\CO_E$ (resp. $\CO_F$). Let $q_E= |\CO_E/\CP_E|$ and $q_F=|\CO_F/\CP_F|$. Let $E^1=\wpair{x\in E^\times: x\bar x=1}$.

\subsection*{$\RU(2,2)$ and its subgroups}
Let $G=\RU_{E/F}(2,2)$ be the isometry group of the Hermitian form defined by $$s=\begin{pmatrix} &1_2\\ -1_2 &\end{pmatrix},$$
where $1_2$ is the $2\times 2$ identity matrix. Explicitly, $$G(F)=\wpair{g\in \GL_{4}(E):  g s {}^t \bar g=s.}$$ 
When the field extension $E/F$ is understood, we will ignore it from the notation, and write $\RU(2,2)$ instead of $\RU_{E/F}(2,2)$. Let $P=MN$ be the Siegel parabolic subgroups with Levi subgroup
$$M(F)=\wpair{\bm(a):=\begin{pmatrix}a& \\ & {}^t \bar a^{-1} \end{pmatrix}, a\in \GL_2(E)},$$
and unipotent subgroup $$N(F)=\wpair{\bn(b):=\begin{pmatrix}1&b\\ &1 \end{pmatrix}, b\in \Herm_2(F)},$$
where $\Herm_2(F)=\wpair{x\in \textrm{Mat}_{2\times 2}(E): x={}^t\bar x}.$
Let $T$ be the maximal torus in $M$. A typical element of $T$ is of the form $\bt(a_1,a_2)=\diag(a_1,a_2,\bar a_1^{-1},\bar a_2^{-1})\in T$ with $a_1,a_2\in E^\times$. We also use the  notation
$$\bt(a):=\bt(a,1),a\in E^\times.$$

Let $U$ be the maximal unipotent subgroup defined by 
$$U=\wpair{\bm(u)n: u=\begin{pmatrix}1& x\\ &1 \end{pmatrix}\in \GL_2(E), n\in N}.$$
Let $Z$ be the center of $G$. Then $Z=\wpair{\bt(z,z),z\in E^1}\cong E^1$. Denote $B=TU$, a Borel subgroup of $G$. 

From the isomorphism $M\cong \GL_2(E)$, we view $\GL_2(E)$ as a subgroup of $G$. In $\GL_2(E)$, we denote 
$B^{(2)}=T^{(2)}N^{(2)}$ the upper triangular Borel subgroup with $T^{(2)}$ the torus and $N^{(2)}$ be the upper triangular unipotent subgroup. 
\subsection*{Roots and Weyl group}
The group $G$ has two simple roots defined by 
$$\alpha(\bt(a_1,a_2))=a_1/a_2, \beta(\bt(a_1,a_2))=a_2\bar a_2, a_1,a_2\in E^\times.$$
The positive roots are $\Sigma^+=\wpair{\alpha,\beta,\alpha+\beta, 2\alpha+\beta}$. Let $s_\alpha$ be the simple reflection of $\alpha$ and $s_\beta$ be the simple reflection of $\beta$. The Weyl group $\textbf{W}$ of $G$ is 
$$\wpair{1,s_\alpha,s_\beta,s_\alpha s_\beta, s_\beta s_\alpha, s_\alpha s_\beta s_\alpha, s_\beta s_\alpha s_\beta, s_\alpha s_\beta s_\alpha s_\beta}.$$
We denote 
$$w_0=(s_\alpha s_\beta)^2, w_1=s_\beta s_\alpha s_\beta, w_2=s_\alpha s_\beta s_\alpha.$$

For $w\in \bW$, we will fix a representative $\dot w\in G$ of $w$ by 

$$\dot s_\alpha=\begin{pmatrix}&1&&\\ 1& &&\\ &&&1\\ &&1& \end{pmatrix}, \dot s_\beta=\begin{pmatrix}1&&&\\ &&&1\\ &&1&\\ &-1&& \end{pmatrix}$$
and $(ww')^{\dot~}=\dot w \dot w'$. Then 
$$\dot w_0=\begin{pmatrix}&I_2\\ -I_2& \end{pmatrix}$$
We have the relation
$$s_\alpha(\beta)=2\alpha+\beta, s_\beta(\alpha)=\alpha+\beta.$$
For a root $\gamma,$ let $U_\gamma $ be the one parameter subgroup associated to $\gamma$. Let $\bx_\gamma: F\ra U_\gamma$, or $\bx_\gamma:E\ra U_\gamma$ be the corresponding isomorphism. For example, 
$$U_\alpha=\wpair{\bx_\alpha(r)=\begin{pmatrix}1& r&&\\ &1&&\\ &&1&\\ &&-\bar r &1 \end{pmatrix},r\in E}, U_{-\alpha-\beta}=\wpair{\bx_{-\alpha-\beta}(r)=\begin{pmatrix}1& &&\\ &1&&\\ &\bar r&1&\\r &&&1 \end{pmatrix},r\in E}.$$

\section{Local gamma factors for $\RU(2,2)\times \Res_{E/F}(\GL_k), k=1,2$}
\subsection{Weil representations of $\RU(1,1)$}
Let $W$ be the 2-dimensional skew-Hermitian space with skew-Hermitian structure defined by 
$$(w_1,w_2)_W= w_1 \begin{pmatrix} &1\\ -1&\end{pmatrix} {}^t \bar w_2,$$
where $w_1,w_2\in W$ are viewed as row vectors.
Let $G_1=\RU(1,1)=\RU(W)$ be the isometry group of $W$, i.e.,
$$G_1(F)=\wpair{g\in \GL_2(E): g\begin{pmatrix} &1\\ -1& \end{pmatrix}{}^t \bar g= \begin{pmatrix} &1\\ -1& \end{pmatrix} }.$$
In the group $G_1$, we will use the following notations:
$$\bm_1(a)=\begin{pmatrix}a & \\ & \bar a^{-1} \end{pmatrix}, \textrm{ for } a\in E^\times, \bn_1(x)=\begin{pmatrix}1& x\\ &1 \end{pmatrix}, \textrm{ for }x\in F.$$
Let $M_1$ be the subgroup of $G_1$ consisting elements of the form $\bm_1(a),a\in E^\times$ and $N_1$ be the subgroup of $G_1$ consisting elements of the form $\bn_1(x),x\in F$. Let $B_1=M_1N_1$, which is a Borel subgroup of $G_1$.

The skew-Hermitian space $W_{/E}$ can be viewed as a symplectic space over $F$, with the symplectic form $\pair{~,~}_W$ defined by $\tr_{E/F}((~,~)_W)$. We then have an embedding $G_1\ra \Sp(W)$. Let $\mu$ be a character of $E^\times$ such that $\mu|_{F^\times}=\epsilon_{E/F}$, then there is a splitting $s_\mu: G_1\ra \widetilde \Sp(W)$, see \cite{HKS}.

Let $\RH(W)$ be the Heisenberg group associated to $W$. Explicitly, $\RH(W)=W\oplus F$ with the product (written additively) defined by
$$(w_1,t_1)+(w_2,t_2)=(w_1+w_2, t_1+t_2+\frac{1}{2}\pair{w_1,w_2}_W), w_1, w_2\in W, t_1, t_2\in F.$$

Let $J_W=\RU(W)\ltimes \RH(W)$, which is called the Fourier-Jacobi group associated to $W$ in the literature, see \cite{GGP} for example. Let $\psi$ be an additive character of $F$, then we have a Weil representation $\omega_{\psi}$ of the group $\widetilde \Sp(W)\ltimes \RH(W)$, which can be realized on the space $\CS(E)$. For a character $\mu$ of $E^\times$ such that $\mu|_{F^\times}=\epsilon_{E/F}$, we then have a Weil representation $\omega_{\mu,\psi}$ of $J_W$ by the embedding $J_W\ra \widetilde \Sp(W)\ltimes \RH(W)$ induced by $s_\mu$.

We will view $G_1$ as a subgroup of $G=\RU(2,2)$ by the embedding 
$$g=\begin{pmatrix}a& b\\ c&d \end{pmatrix}\mapsto = \begin{pmatrix} 1&&&\\ &a&&b\\ &&1&\\ &c&&d\end{pmatrix}.$$
Let $H$ be the subgroup of $G$ consisting elements of the form
$$[x,y,z]=\bm\begin{pmatrix} 1& x\\ &1 \end{pmatrix}\bn \begin{pmatrix}z& y\\ \bar y&  \end{pmatrix}, x,y\in E,z\in F.$$
We can check that the map
$$\RH(W)\ra H$$
$$((x,y),t)\mapsto [x,y,t-\frac{1}{2}\tr(\bar x y)]$$
defines an isomorphism, and $G_1\cdot H \cong J_W=\RU(W)\ltimes \RH(W)$. 

Thus we get a Weil representation $\omega_{\mu,\psi}$ of $G_1\cdot H$ on $\CS(E)$. The following formulas hold:
\begin{align}
\omega_{\mu,\psi}([0,y,z][x,0,0])\phi(\xi)&=\psi(z+\tr(\xi \bar y))\phi(x+\xi), \label{eq11}\\
\omega_{\mu,\psi}(\bm_1(a))\phi(\xi)&=\mu(a)|a|^{1/2}\phi(\xi a), \nonumber\\
\omega_{\mu,\psi}(\bn_1(b))\phi(\xi)&=\psi(b\xi \bar \xi)\phi(\xi), \nonumber \\
\omega_{\mu,\psi}(w^1)\phi(\xi)&=\epsilon_\psi \int_{E}\psi(-\tr(\bar \xi y))\phi(y)dy. \nonumber
\end{align}
for $\phi\in \CS(E), \xi\in E,$ where $\epsilon_\psi$ is certain Weil index (we don't need its precise definition here), $w^1=\begin{pmatrix}&1\\ -1& \end{pmatrix}$ is the unique nontrivial Weyl element of $G_1$ and $dy$ is the Haar measure on $E$ such that this Fourier transform is self-dual.

\subsection{Weil representations of $\RU(2,2)$}
Let $\psi$ be a nontrivial additive character of $F$, $\mu$ be a character of $E^\times$ such that $\mu|_{F^\times}=\epsilon_{E/F}$ as above. Let $\RU(1)$ be the isometry group of the 1-dimensional Hermitian space $E$ with the Hermitian form $(x,y)=\bar x y$. Then we have a Weil representation $\omega_{\mu,\psi}$ of the pair $G\times \RU(1)$ on $\CS(E^2)$, where $E^2=E\oplus E$. We have the familiar formulas:
\begin{align}
\omega_{\mu,\psi}(h)\Phi(x)&=\Phi(h^{-1}x),\Phi\in \CS(E^2), h\in \RU(1), x\in E^2,\\
\omega_{\mu,\psi}(\bm(a))\Phi(x)&=\mu(\det(a))|\det(a)|^{1/2}\Phi(xa),a\in \GL_2(E), \nonumber\\
\omega_{\mu,\psi}(\bn(b))\Phi(x)&=\psi(x b {}^t\! \bar x)\Phi(x), b\in \Herm_2(F), \nonumber\\
\omega_{\mu,\psi}(w_0)\Phi(x)&= \gamma_\psi \int_{E^2} \Phi(y)\psi(-\tr_{E/F}(x {}^t \!\bar y))dy,      \nonumber
\end{align}
where $x=(x_1,x_2)\in E^2$ is viewed as a row vector and $\gamma_\psi$ is another Weil index.

As a representation of $G=\RU(2,2)$, $\omega_{\mu,\psi}$ is not irreducible. Let $\chi$ be a character of $E^1$, let $\CS(E^2,\chi)$ be the subspace of $\CS(E^2)$ such that $\Phi(zx)=\chi(z)\Phi(x)$ for all $z\in \RU(1)$ and $x\in E^2$. Then $\CS(E^2,\chi)$ is invariant under the action of $G$. Denote this representation by $\omega_{\mu,\psi,\chi}$. Then $\omega_{\mu,\psi,\chi}$ is an irreducible representation of $G$, and 
$$\omega_{\mu,\psi}=\oplus_{\chi\in \hat E^1}\omega_{\mu,\psi,\chi},$$
where $\hat E^1$ denotes the dual group of $E^1$.

\subsection{Induced representation and intertwining operator}
For a quasi-character $\eta$ of $E^\times$, and a complex number $s\in \BC$, let $\eta_s$ be the character of $E^\times $ defined by $\eta_s(a)=\eta(a)|a|_E^s$. We consider the (normalized) induced representation $\Ind_{B_1}^{G_1}(\eta_{s-1/2})$. By definition $\Ind_{B_1}^{G_1}(\eta_{s-1/2})$ consists smooth complex valued functions $f_s$ on $G_1$ such that
$$f_s(\bn_1(b)\bm_1(a)g)=\eta_s(a)f_s(g), b\in F, a\in E^\times, g\in G_1.$$
There is an intertwining operator $M(s): \Ind_{B_1}^{G_1}(\eta_{s-1/2})\ra \Ind_{B_1}^{G_1}(\eta^*_{1/2-s})$ defined by 
$$M(s)(f_s)(g)=\int_{N_1}f_s(w^1ng)dn,$$
where $\eta^*=w^1\eta=\bar \eta^{-1}$. It is well-known that the intertwining operator $M(s)$ is well-defined for $\Re(s)>>0$ and can be meromorphically continued to all $s\in \BC$.

Let $(\tau,V_\tau)$ be an irreducible smooth representation of $\GL_2(E)$, we consider the induced representation $I(s,\tau)=\Ind_{P}^G(\tau|\det|_E^{s-1/2})$. We now fix a nontrivial additive character $\psi_E$ of $E$ such that $\psi_E|_F=1$. For example, we can take a nontrivial additive character $\psi$ of $F$ and a pure imaginary element $e\in E$, and then define $\psi_E(x)=\psi(\tr(ex))$. Given such a character $\psi_E$ of $E$, which is also viewed as a character of the unipotent subgroup $N^{(2)}$ of $\GL_2(E)$, we fix a Whittaker functional $\lambda\in \Hom_{N^{(2)}}(\tau,\psi_E)$. For a section $\xi_s: G\ra V_\tau$, we consider the $\BC$-valued function $f_{\xi_s}(g)$ on $G$ defined by 
$$f_{\xi_s}(g)=\lambda(\xi_s(g)).$$ 
Note that, $\xi_s$ satisfies the relation
\begin{equation}\xi_s(n\bm(a)g)=|\det(a)|_E^{s+1/2}\tau(a)\xi_s(g),n\in N, a\in \GL_2(E).\end{equation}
Thus we get
 \begin{equation}f_{\xi_s}(n\bm(n_1)g)=\psi_E(n_1)f_{\xi_s}(g), n\in N, n_1\in N^{(2)}.\end{equation}
 
 Recall that $w_1=s_\beta s_\alpha s_\beta$, which is the unique element in $\textbf{W}$ such that $w_1(\alpha)$ is positive and simple and $w_1(\beta)<0$. We view $\tau$ as a representation of $M$ and define a representation $\tau^*:=\dot w_1\tau$ of $M$ on the same space $V_\tau$ by conjugation of $w_1$, i.e., $$(\dot w_1 \tau)(\bm(a))=\tau(\dot w_1^{-1}\bm(a)\dot w_1)=\tau(\bm(J_2{}^t\bar a^{-1}J_2)),$$ where $J_2=\begin{pmatrix} &1\\ 1&\end{pmatrix}$. Note that for $n_1=\begin{pmatrix}1&x\\ &1 \end{pmatrix}\in N^{(2)}$, we have 
$$\tau^* (n_1)=\tau\left( \begin{pmatrix}1& -\bar x \\ &1 \end{pmatrix}\right)$$ and by our choice of $\psi_E$, we have $\psi_E(x)=\psi_E(-\bar x)$. Thus the fixed Whittaker functional $\lambda: V_\tau\ra \BC_{\psi_E}$ for $\tau$ gives a $\psi_E$ Whittaker functional for $\tau^*$, i.e., $\lambda\in \Hom_{N^{(2)}}(\tau^*, \psi_E)$.

We consider the normalized induced representation $I(1-s, \tau^*):=\Ind_P^G(\tau^* |\det |^{-s+1/2}).$ For a section $\tilde\xi_{1-s}\in I(1-s,\tau^*)$, we can define $f_{\tilde\xi_{1-s}}$ similar as above and we also have 
\begin{equation}f_{\tilde \xi_{1-s}}(n\bm(n_1)g)=\psi_E(n_1)f_{\tilde \xi_{1-s}}(g).\end{equation}
Consider the standard intertwining operator $$M(s): I(s,\tau)\ra I(1-s, \tau^*)$$
$$M(s)(\xi_s)(g)=\int_{N}\xi_s(n^{-1}\dot w_1g)dn,$$
which is absolutely convergent for $\Re(s)>>0$ and can be meromorphically continued to all $s\in \BC$.
\subsection{The local zeta integral and gamma factor}
For $g\in G$, we denote $j(g)=s_\alpha g s_\alpha$. We fix a nontrivial additive character $\psi$ (resp. $\psi_E$) of $F$ (resp. $E$) such that $\psi_E|_F=1$ and define a character $\psi_U$ of $U$ by 
$$\psi_U\left(\bm\begin{pmatrix}1& x\\ &1 \end{pmatrix}\bn \begin{pmatrix}z & y\\ \bar y & b \end{pmatrix} \right)=\psi_E(-x)\psi(b).$$

Let $\pi$ be an irreducible smooth $\psi_U$-generic representation of $G$, and let $\CW(\pi,\psi_U)$ be the space of $\psi_U$ Whittaker functions on $\pi$. Let $\eta$ be a quasi-character of $E^\times,$ and $\tau$ be an irreducible smooth representation of $\GL_2(E)$.

For $W\in \CW(\pi, \psi_U), \phi_1\in \CS(E),\phi_2\in \CS(E^2), f_s\in I(s,\eta), \xi_s\in I(s,\tau)$, we consider the following integrals 
$$\Psi(W,\phi_1,f_s)=\int_{N_1\setminus G_1}\int_E W( j(\bx_\alpha(x)g))(\omega_{\mu,\psi^{-1}}(g)\phi_1)(x)dx f_s(g) dg,$$ and
$$\Psi(W,\phi_2, f_s)=\int_{U\setminus G} W(g)\omega_{\mu,\psi^{-1}}(g)\phi_2(e_2) f_{\xi_s}(g)dg,$$
where $e_2$ is the row vector $(0,1)\in E^2$ and $f_{\xi_s}$ is defined in the previous section. It is easy to see the above integrals are well-defined formally. Using a standard estimate of the Whittaker function $W$, one can show that the above integrals are absolutely convergent when $\Re(s)>>0$ and in fact define rational functions of $q_E^{-s}$. We omit the details. Similar proof for other groups can be found elsewhere, say \cite{JPSS,C} in the $\GL_n\times \GL_m$ case, and \cite{GRS1, GRS2} in the symplectic group case.

\begin{rem}\upshape
 The above local zeta integrals were first studied by Gelbart and Piatetski-Shapiro in the case $\Sp_{2n}\times \GL_n$ and $\RU(n,n)\times \Res_{E/F}(\GL_n)$ in \cite{GPS}. In the symplectic group case, Ginzburg, Rallis and Soudry (\cite{GRS1, GRS2}) extended the construction to the case $\Sp_{2n}\times \GL_m$ for general $m$.  \end{rem}

\begin{prop}{\label{cor17}}
There is a meormorphic function $\gamma(s,\pi\times (\mu \eta), \psi_U), \gamma(s,\pi\times (\mu\tau),\psi_U)$ such that
$$\Psi(W, \phi_1, M(s) f_s)=\gamma(s,\pi\times(\mu \eta),\psi_U)\Psi(W,\phi_1,f_s), $$
and 
$$\Psi(W,\phi_2, M(s)\xi_s)=\gamma(s,\pi\times (\mu\tau),\psi_U)\Psi(W,\phi_2, M(s)\xi_s)$$
 for all $ W\in \CW(\pi,\psi_U),\phi_1\in \CS(E),\phi_s\in \CS(E^2)$, $ f_s\in I(s,\eta)$ and $\xi_s\in I(s,\tau)$.
\end{prop}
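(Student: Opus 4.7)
The plan is to follow the standard Rankin--Selberg uniqueness-of-functionals argument going back to Gelbart and Piatetski-Shapiro. For each of the two cases I would reinterpret the local zeta integral as a trilinear functional on $\pi \otimes \omega_{\mu,\psi^{-1}} \otimes I(s,\eta)$ (resp.\ on $\pi \otimes \omega_{\mu,\psi^{-1}} \otimes I(s,\tau)$) that is equivariant for a suitable diagonal subgroup, and then show that outside a discrete subset of $\BC$ the space of such functionals is at most one-dimensional. Since $M(s)$ intertwines $I(s,\eta)$ with $I(1-s,\eta^*)$, and $I(s,\tau)$ with $I(1-s,\tau^*)$, the maps $f_s \mapsto \Psi(W,\phi_1,f_s)$ and $f_s \mapsto \Psi(W,\phi_1,M(s)f_s)$ realize elements of the same one-dimensional Hom space; the scalar of proportionality, which a priori lies in $\BC(q_E^{-s})$, is the sought gamma factor.

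First I would set up the analytic input. Combining the asymptotic expansion of $W \in \CW(\pi,\psi_U)$ along the torus (of Casselman--Jacquet type) with the rapid decay of the matrix coefficients of $\omega_{\mu,\psi^{-1}}$ yields absolute convergence of $\Psi(W,\phi_1,f_s)$ and $\Psi(W,\phi_2,\xi_s)$ for $\Re(s) \gg 0$, following the templates used in \cite{JPSS,C} for $\GL_n \times \GL_m$ and in \cite{GRS1,GRS2} for the symplectic group. The integrals $\Psi(W,\phi,M(s)f_s)$ converge for $\Re(s) \ll 0$ by the same argument applied to the dual induced representation. Rewriting each integral in Iwasawa coordinates and applying a Bernstein-type rationality argument then exhibits it as an element of $\BC(q_E^{-s})$, hence meromorphic in $s$.

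The heart of the proof is uniqueness of equivariant functionals, and this is the main obstacle. In the $\GL_2(E)$ case, after unfolding along $N$ and applying the fixed Whittaker functional $\lambda$ to $\tau$, I would interpret $\xi_s \mapsto \Psi(W,\phi_2,\xi_s)$ as a nonzero element of a Hom space of the form $\Hom_G(I(s,\tau) \otimes \pi \otimes \omega_{\mu,\psi^{-1}}, \BC)$ with the appropriate $\psi_U$-equivariance twist, and would establish at most one-dimensionality by stratifying $P \backslash G$ into Bruhat cells and verifying, via root-subgroup computations on the Jacquet quotients, that only the open cell carries a nonzero invariant. The $\GL_1$ case is parallel: one works with the double coset space for $G_1 \cdot H$ inside $G$, and the relevant equivariance combines the Weil action on $\CS(E)$ with the Whittaker character $\psi_U$. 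The delicate technical point is to verify that the twist by $\omega_{\mu,\psi^{-1}}$ does not create additional invariants on the closed strata, and that the compatibility between $\psi_U$ and the Whittaker datum on $\tau$ (resp.\ the character $\eta$) genuinely pins down the functional on the open stratum up to scalar. Granted this uniqueness, the stated functional equations produce rational functions $\gamma(s, \pi \times (\mu\eta), \psi_U)$ and $\gamma(s, \pi \times (\mu\tau), \psi_U)$ in $q_E^{-s}$, and the identities extend to all $s \in \BC$ by meromorphic continuation.
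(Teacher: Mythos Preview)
Your overall strategy---interpret both sides of the functional equation as elements of a single $\Hom$ space and invoke a multiplicity-one statement---is exactly the paper's strategy. The difference is in how the multiplicity-one input is obtained. The paper does not attempt a direct Bruhat-cell or Jacquet-module analysis; instead it simply cites the uniqueness of Fourier--Jacobi models, i.e.\ the theorem of Sun \cite{Su} (formulated in \cite{GGP}) that for irreducible admissible representations the relevant space of $J_W$-equivariant functionals has dimension at most one, and refers to Kaplan \cite{Ka} for how this is packaged into a functional equation in the parallel $\Sp_{2n}$ setting. Since $I(s,\tau)$ and $I(s,\eta)$ are irreducible for $q_E^{-s}$ off a finite set, this immediately gives the proportionality as a rational function of $q_E^{-s}$.

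Your proposed route---stratify $P\backslash G$ (or the relevant double-coset space for $G_1\cdot H$) and kill invariants on closed strata by hand---is the older Gelbart--Piatetski-Shapiro approach, and in principle it can be made to work here. But the step you yourself flag as ``delicate,'' namely checking that the twist by $\omega_{\mu,\psi^{-1}}$ creates no extra invariants on the closed strata, is precisely the content of Sun's multiplicity-one theorem and is genuinely nontrivial; it is not a routine root-subgroup computation. So rather than carrying out that analysis from scratch, the cleaner move is to recognize that the trilinear functional you have written down is exactly a Fourier--Jacobi functional in the sense of \cite{GGP}, and then invoke \cite{Su} directly. That replaces your entire ``heart of the proof'' paragraph with a one-line citation, which is what the paper does.
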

\begin{proof}
The local functional equation follows from the uniqueness of the Fourier-Jacobi models, \cite{GGP, Su}. See \cite{Ka} for some details of the proof in the $\Sp_{2n}$ case. 
\end{proof}


\section{Howe vectors}
In the following sections, we will follow Baruch's method given in \cite{Ba1, Ba2}, to give a proof of the local converse theorem for generic representations of $\RU(2,2)=\RU_{E/F}(2,2)$ when $E/F$ is unramified.

One main tool of Baruch's method is Howe vectors, which are used to define partial Bessel functions. In this section, we give a review of the Howe vectors in our case following \cite{Ba1, Ba2}.

From this section till the end of $\S$5, we assume the quadratic extension $E/F$ is unramified.
\subsection{Howe vectors}
Let $p_F$ be a uniformizer of $F$, which also can be viewed as a uniformizer of $E$ since $E/F$ is unramified.

Let $\psi$ (resp. $\psi_E$) be an unramified additive character of $F$ (resp. $E$).  As in $\S$1, we require that $\psi_E$ is trivial on $F$.  From these data, we have defined a character $\psi_U$ of $U$.

For a positive integer $m$, we consider the congruence subgroup $K_m=(1+\textrm{Mat}_{4\times 4}(\CP_E^m))\cap G.$ Define a character $\tau_m$ on $K_m$ by $$\tau_m(k)=\psi_E(-p_F^{-2m}k_{12})\psi(\frac{1}{2}p_F^{-2m}\tr_{E/F}(k_{24})),\quad \textrm{ for } k=(k_{ij})\in K_m. $$
It is easy to see that $\tau_m$ is indeed a character on $K_m$.

Let $$d_m=\bt(p_F^{-3m}, p_F^{-m})\in G,$$
and $H_m=d_mK_m d_m^{-1}$.
Then 
$$H_m=\begin{pmatrix}1+\CP_E^m& \CP_E^{-m}& \CP_E^{-5m}& \CP_E^{-3m}\\ \CP_E^{3m}& 1+\CP_E^m & \CP_E^{-3m}& \CP_E^{-m}\\ \CP_E^{7m}& \CP_E^{5m}& 1+\CP_E^m & \CP_E^{3m}\\ \CP_E^{5m}& \CP_E^{3m} & \CP_E^{-m} & 1+\CP_E^m \end{pmatrix}\cap G.$$

We define a character $\psi_m$ on $H_m$ by $\psi_m(j)=\tau_m(d_m^{-1}jd_m)$ for $j\in H_m$. Let $U_m=U\cap H_m$, we can check that $\psi_m|_{U_m}=\psi_U|_{U_m}$.

Now let $(\pi,V_\pi)$ be a $\psi_U$-generic irreducible smooth representation of $G=\RU(2,2)$. We fix a Whittaker functional for $\pi$ and thus for $v\in V_\pi$, there is an associated Whittaker function $W_v$. Let $v\in V_\pi$ be a vector such that $W_v(1)=1$. For $m\ge 1$, as Barach defined in \cite{Ba1, Ba2}, we consider 
$$v_m=\frac{1}{\Vol(U_m)}\int_{U_m}\psi_U(u)^{-1}\pi(u)v du.$$
Let $C=C(v)$ be an integer such that $v$ is fixed by $K_C$.
\begin{lem}{\label{lemma21}}
 We have
\begin{enumerate}
\item $W_{v_m}(1)=1.$
\item If $m\ge C$, we have $\pi(j)v_m=\psi_m(j)v_m$ for all $j\in H_m$.
\item If $k\le m$, then 
$$v_m=\frac{1}{\Vol(U_m)}\int_{U_m}\psi_U(u)^{-1}\pi(u)v_k du.$$
\end{enumerate}
\end{lem}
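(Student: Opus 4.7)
The plan is to handle the three parts in increasing order of difficulty, using (a) the Whittaker transformation property and (b) the Iwahori factorization of $H_m$ combined with the fact that $v$ is fixed by $K_C\supset K_m$ when $m\ge C$. Part (1) will follow from $W_v(u)=\psi_U(u)$ for $u\in U$: substituting this into
\[ W_{v_m}(1) = \frac{1}{\Vol(U_m)}\int_{U_m} \psi_U(u)^{-1} W_v(u)\, du \]
gives integrand $1$. For part (3), I will expand $v_k$ inside the integrand, apply Fubini, and substitute $u''=uu'$. The key point is that $U_k\subset U_m$ when $k\le m$, which one reads off from the explicit shape of $H_m$ (the positive-root entries scale like $\CP_E^{-m},\CP_E^{-3m},\CP_E^{-5m}$, so higher $m$ means larger $U_m$); hence the substitution is a measure-preserving bijection of $U_m$ onto itself for each fixed $u'\in U_k$, and the character identity $\psi_U(uu')=\psi_U(u'')$ recovers $v_m$.

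For part (2), the main step, I will use the Iwahori factorization $H_m = U_m\cdot H_m^T\cdot H_m^-$, where $H_m^T:=H_m\cap T$ and $H_m^-:=H_m\cap \bar U$ with $\bar U$ the unipotent radical opposite to $U$. From the matrix form of $H_m$ together with the list of positive-root positions $(1,2),(1,3),(1,4),(2,3),(2,4),(4,3)$ of $U$, I will verify that both $H_m^T$ and $H_m^-$ consist of matrices whose diagonal entries lie in $1+\CP_E^m$ and whose off-diagonal entries lie in $\CP_E^{3m}\subset\CP_E^m$; thus $H_m^T\cup H_m^-\subset K_m\subset K_C$, so that $\pi(tu^-)v=v$ for all $t\in H_m^T,u^-\in H_m^-$. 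I will also check that $\psi_m$ is trivial on both $H_m^T$ and $H_m^-$: the $(1,2)$ and $(2,4)$ entries of $d_m^{-1}hd_m$ vanish when $h$ is diagonal or lies in $\bar U$, so the defining formula for $\tau_m$ gives $1$. Combined with the relation $\psi_m|_{U_m}=\psi_U|_{U_m}$ recorded in the setup, this yields $\psi_m(u^+tu^-)=\psi_U(u^+)$ under the factorization.

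Assembling these ingredients,
\[ \int_{H_m}\psi_m(h)^{-1}\pi(h)v\,dh \;=\; \int_{U_m}\!\!\int_{H_m^T}\!\!\int_{H_m^-}\psi_U(u^+)^{-1}\pi(u^+)v\,du^-\,dt\,du^+ \;=\; \Vol(H_m^T)\Vol(H_m^-)\Vol(U_m)\cdot v_m, \]
so $v_m$ is proportional to $\int_{H_m}\psi_m(h)^{-1}\pi(h)v\,dh$. Left-translating this integral by $j\in H_m$ and invoking the translation invariance of Haar measure on $H_m$ then produces $\pi(j)v_m=\psi_m(j)v_m$. The chief technical obstacle is justifying the Iwahori factorization together with the compatibility of Haar measures $dh = du^+\,dt\,du^-$ on $H_m$; this is standard for pro-$p$ congruence subgroups but requires a short verification using the explicit root data for $\RU(2,2)$, which conveniently has only four positive roots.
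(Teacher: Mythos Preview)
Your proposal is correct and reconstructs precisely the standard argument due to Baruch that the paper invokes; the paper itself does not write out a proof but simply cites \cite{Ba1, Ba2}, and your sketch is the argument found there. In particular, your use of the Iwahori factorization $H_m=U_m\cdot(H_m\cap T)\cdot(H_m\cap\bar U)$, together with the observation that the torus and opposite-unipotent parts lie in $K_m\subset K_C$ and are annihilated by $\psi_m$, is exactly how Baruch establishes part (2).
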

\begin{proof}
The proof is given in \cite{Ba1} in the general case.  Although \cite{Ba1} is not published, the proof in the general case is in fact the same as the $\RU(2,1)$ case, which can be found in \cite{Ba2}.
\end{proof}
From Lemma \ref{lemma21} (2), we have
\begin{equation}\label{eq21}
W_{v_m}(ugj)=\psi_U(u)\psi_m(j)W_{v_m}(j), \forall u\in U, j\in H_m, m\ge C.
\end{equation}
The vectors $\wpair{v_m}_{m\ge C}$ are called Howe vectors and $W_{v_m}$ are the associated partial Bessel functions.

\begin{lem}{\label{lemma22}}
\begin{enumerate}
\item For $m\ge C$, $t\in T$, if $W_{v_m}(t)\ne 0, $ then $\alpha(t)\in 1+\CP_E^m$ and $\beta(t)\in 1+\CP_F^m$, where $\alpha(t)=a_1/a_2$ and $\beta(t)=a_2\bar a_2$ for $t=\bt(a_1,a_2)\in T$.
\item For $w=s_\alpha, s_\beta, s_\alpha s_\beta$ or $s_\beta s_\alpha$, we have $W_{v_m}(t\dot w)=0$ for all $t\in T$ and $m\ge C$.
\end{enumerate}
\end{lem}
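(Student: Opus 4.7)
The plan is to pit the two quasi-invariance laws of $W_{v_m}$ against each other: on the one hand $W_{v_m}(ug) = \psi_U(u) W_{v_m}(g)$ for $u \in U$, and on the other hand, by~(\ref{eq21}), $W_{v_m}(gj) = \psi_m(j) W_{v_m}(g)$ for $j \in H_m$. Whenever $j \in H_m$ has the property that $u := g j g^{-1}$ belongs to $U$, comparing the two evaluations of $W_{v_m}(ug) = W_{v_m}(gj)$ forces
\[
\bigl(\psi_U(gjg^{-1}) - \psi_m(j)\bigr) W_{v_m}(g) = 0,
\]
so exhibiting a single such $j$ with $\psi_U(gjg^{-1}) \neq \psi_m(j)$ kills $W_{v_m}(g)$.

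For part~(1), I take $g = t$ and $j = \bx_\alpha(s)$ with $s \in \CP_E^{-m}$, so that both $j$ and $u = t j t^{-1} = \bx_\alpha(\alpha(t) s)$ lie in $U$. A direct conjugation by $d_m = \diag(p_F^{-3m}, p_F^{-m}, p_F^{3m}, p_F^{m})$ shows $\psi_m(\bx_\alpha(s)) = \psi_E(-s)$, while $\psi_U(u) = \psi_E(-\alpha(t) s)$. If $W_{v_m}(t) \neq 0$, these must agree for every $s \in \CP_E^{-m}$, and since $\psi_E$ has conductor $\CO_E$ this yields $\alpha(t) \in 1 + \CP_E^m$. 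The parallel argument with $j = \bx_\beta(s)$ for $s \in \CP_F^{-m}$, together with $\psi_m(\bx_\beta(s)) = \psi(s)$, gives $\beta(t) \in 1 + \CP_F^m$.

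For part~(2), the crucial observation is that $\psi_U$ is trivial on $U_{\alpha+\beta}$ and on $U_{2\alpha+\beta}$, whereas $\psi_m$ restricts to the nontrivial characters $s \mapsto \psi_E(-s)$ and $s \mapsto \psi(s)$ on $U_\alpha \cap H_m$ and $U_\beta \cap H_m$ respectively. For each of the four Weyl elements $w$, I pick a positive root $\gamma \in \{\alpha+\beta,\, 2\alpha+\beta\}$ whose $w^{-1}$-image is simple:
\[
\begin{array}{c|cccc}
w & s_\alpha & s_\beta & s_\alpha s_\beta & s_\beta s_\alpha \\ \hline
\gamma & 2\alpha+\beta & \alpha+\beta & \alpha+\beta & 2\alpha+\beta \\
w^{-1}(\gamma) & \beta & \alpha & \alpha & \beta
\end{array}
\]
(the last two entries are routine checks in the $C_2$ Weyl group, using that $s_\alpha$ fixes $\alpha+\beta$ and $s_\beta$ fixes $2\alpha+\beta$). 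Taking $g = t\dot w$ and $j = \bx_{w^{-1}(\gamma)}(s) \in H_m$, conjugation yields $u = gjg^{-1} = \bx_\gamma(c\,\gamma(t)\, s) \in U_\gamma \subset U$ for a nonzero scalar $c$, so $\psi_U(u) = 1$. Meanwhile $\psi_m(j)$ equals $\psi_E(-s)$ or $\psi(s)$, which is nontrivial on $\CP_E^{-m}$ (resp.\ $\CP_F^{-m}$) as soon as $m \geq 1$; choosing such an $s$ produces the mismatch and forces $W_{v_m}(t\dot w) = 0$.

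The only substantive computations are the restrictions of $\psi_m$ to the four positive root subgroups $U_\alpha, U_\beta, U_{\alpha+\beta}, U_{2\alpha+\beta}$. Since $\tau_m$ depends only on the $(1,2)$ and $(2,4)$ matrix entries and diagonal conjugation by $d_m$ does not move entries, each such restriction is determined by reading off a single entry and rescaling by a power of $p_F$. Accordingly I do not foresee a real obstacle beyond this direct calculation and the Weyl-group bookkeeping above.
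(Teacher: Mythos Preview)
Your proof is correct and is essentially the same argument as the paper's. The only cosmetic difference is your labeling in part~(2): you call the non-simple positive root $\gamma$ and take $j$ in the simple root subgroup $U_{w^{-1}(\gamma)}$, whereas the paper calls the simple root $\gamma$ and conjugates $\bx_\gamma(r)$ through $t\dot w$ to land in $U_{w(\gamma)}$; these are the same computation read in opposite directions, and your Weyl-group table matches the paper's list $s_\alpha(\beta)=2\alpha+\beta$, $s_\beta(\alpha)=\alpha+\beta$, $s_\alpha s_\beta(\alpha)=\alpha+\beta$, $s_\beta s_\alpha(\beta)=2\alpha+\beta$.
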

\begin{proof}
(1) For $x\in \CP_E^{-m}$, consider the element $\bx_\alpha(x)\in U_m$. 
We have $$t\bx_\alpha(x)=\bx_{\alpha}(\alpha(t)x)t.$$
Thus by Lemma 2.1 or Eq.(\ref{eq21}), we have $$\psi_m(\bx_{\alpha}(x))W_{v_m}(t)=\psi_U(\bx_{\alpha}(\alpha(t) x))W_{v_m}(t).$$
If $W_{v_m}(t)\ne 0$, we get $\psi_m(\bx_\alpha(x))=\psi_U(\bx_\alpha(\alpha(t)x))$, or $\psi_E(x)=\psi_E(\alpha(t)x)$. Since this is true for all $x\in \CP_E^{-m}$ and $\psi_E$ is unramified, we get $\alpha(t)-1\in \CP_E^m$, or $\alpha(t)\in 1+\CP_E^m$. A similar argument shows that $\beta(t)\in 1+\CP_F^m$. This proves (1).\\
(2) Note that $w$ which is given in the condition sends a simple root $\gamma$ to a positive non-simple root $w(\gamma)$. In fact, we have $s_\alpha(\beta)=2\alpha+\beta, s_\beta(\alpha)=\alpha+\beta$, $s_\alpha s_\beta(\alpha)=\alpha+\beta$ and $s_\beta s_\alpha (\beta)=2\alpha+\beta$. Take $r$ such that $\bx_\gamma(r)\in U_m$, by Eq.(\ref{eq21}), we have
$$\psi_U(r)W_{v_m}(t\dot w)=W_{v_m}(t\dot w \bx_{\gamma}(r))=W_{v_m}(\bx_{w(\gamma)}(r \gamma (t) )t\dot w)=W_{v_m}(t\dot w).$$
We can take $r$ such that $\psi_U(r)\ne 1$. Thus $W_{v_m}(tw)=0$. 
\end{proof}
\begin{cor}{\label{cor23}}
For $m\ge C$, and $t=\bt(a_1,a_2)\in T$, if $W_{v_m}(t)\ne 0$, then $$a_1/a_2\in 1+\CP_E^m, \textrm{ and } a_2\in E^1(1+\CP_E^m).$$
\end{cor}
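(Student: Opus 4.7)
The first conclusion is immediate from Lemma \ref{lemma22}(1), so the whole content is in the second conclusion. My plan is to read off from Lemma \ref{lemma22}(1) that $\beta(t)=a_2\bar a_2=\Nm_{E/F}(a_2)\in 1+\CP_F^m$, and then reduce the problem to a purely local–field statement about the norm map.

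The key algebraic input I will use is that when $E/F$ is unramified, the norm map
\[
\Nm_{E/F}\colon 1+\CP_E^m \;\longrightarrow\; 1+\CP_F^m
\]
is surjective for every $m\ge 1$. Granting this, since $\Nm_{E/F}(a_2)\in 1+\CP_F^m$ I can pick $b\in 1+\CP_E^m$ with $\Nm_{E/F}(b)=\Nm_{E/F}(a_2)$; then $a_2/b$ lies in the kernel $E^1$ of the norm, and
\[
a_2=(a_2/b)\cdot b \in E^1\cdot(1+\CP_E^m),
\]
which is exactly what is claimed. (Note that $\Nm_{E/F}(a_2)\in 1+\CP_F^m\subset\CO_F^\times$ already forces $a_2\in \CO_E^\times$, so staying inside unit groups is automatic.)

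The only step that requires comment is the surjectivity of $\Nm_{E/F}$ on higher principal units in the unramified case; I would either cite it as a standard fact of local class field theory, or give the one-line Hensel argument: for $y\in 1+\CP_F^m$, the polynomial $X\bar X - y$, viewed via a lift of the nontrivial element of $\Gal(E/F)$ acting on the residue field, reduces mod $\CP_E$ to $X^{q_F+1}-1$, which is separable, so any chosen root of $X\bar X=y$ over the residue field lifts to a root $b\in 1+\CP_E^m$ by Hensel. Thus the main obstacle, if any, is bookkeeping rather than conceptual, and it is where the unramifiedness assumption on $E/F$ (standing since the beginning of this section) is genuinely used.
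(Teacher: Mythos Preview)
Your argument is correct and is exactly the paper's: from Lemma \ref{lemma22}(1) you get $\Nm_{E/F}(a_2)\in 1+\CP_F^m$, then use surjectivity of $\Nm_{E/F}\colon 1+\CP_E^m\to 1+\CP_F^m$ in the unramified case (the paper simply cites Serre, \emph{Local Fields}, Ch.~V, \S2) to choose $b\in 1+\CP_E^m$ with the same norm and conclude $a_2=(a_2/b)\,b\in E^1(1+\CP_E^m)$. One caution on your optional Hensel sketch: $X\bar X-y$ is not a polynomial in $X$, and even reinterpreted it would only produce a root near $1$ rather than in $1+\CP_E^m$; the standard filtration/trace argument or the citation is the clean route here.
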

\begin{proof}
By Lemma {\ref{lemma22}}, we have $a_1/a_2=\alpha(t)\in 1+\CP_E^m$ and $a_2\bar a_2\in 1+\CP_F^m$. Since $E/F$ is unramified, we have $\Nm_{E/F}(1+\CP_E^m)=1+\CP_F^m$, see Chapter V, $\S$2 of \cite{Se}. Since $\Nm(a_2)=a_2\bar a_2\in 1+\CP_F^m$, there exists a $b\in 1+\CP_E^m$ such that $b\bar b =a_2\bar a_2$. It is clear that $a_2/b\in E^1$, and thus $a_2\in E^1(1+\CP_E^m)$.
\end{proof}

We fix the following notations:\\
\noindent\textbf{Notations:} Let $(\pi,V_\pi)$ and $(\pi',V_{\pi'})$ be two irreducible admissible $\psi_U$-generic representations of $G$ such that $\omega_\pi=\omega_{\pi'}$, where $\omega_\pi$ is the central character of $\pi$. Let $v\in V_\pi$, $v'\in V_{\pi'}$, such that $W_v(1)=W_{v'}(1)=1$. We have defined Howe vectors $v_m, v'_m$ for $m\ge C$, where $C=C(v,v')$ is an integer such that $v$ is fixed by $\pi(K_C)$ and $v'$ is fixed by $\pi'(K_C)$.
\begin{cor}{\label{cor24}}
If $m\ge L$, we have $W_{v_m}(b)=W_{v_m'}(b)$ for all $b\in B$.
\end{cor}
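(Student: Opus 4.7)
The plan is to reduce the claimed equality on all of $B$ to an equality of Whittaker values on the maximal torus $T$, and then to squeeze everything out of the $H_m$-invariance furnished by Lemma \ref{lemma21}(2) together with the shared central character. Writing $b = u't$ with $u'\in U$ and $t\in T$ (possible since $B=TU$ and $T$ normalizes $U$), the Whittaker transformation property gives
\[
W_{v_m}(u't) = \psi_U(u')\,W_{v_m}(t), \qquad W_{v_m'}(u't) = \psi_U(u')\,W_{v_m'}(t).
\]
Thus it suffices to prove $W_{v_m}(t)=W_{v_m'}(t)$ for every $t\in T$.

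Fix $t=\bt(a_1,a_2)\in T$. If both sides vanish there is nothing to show, so assume one of them is nonzero; by Corollary \ref{cor23} (applied to $\pi$ or to $\pi'$) we have $a_1/a_2 \in 1+\CP_E^m$ and $a_2\in E^1(1+\CP_E^m)$. Write $a_2=zb$ with $z\in E^1$ and $b\in 1+\CP_E^m$, and set $c=a_1/a_2\in 1+\CP_E^m$, so that $a_1=z(bc)$ with $bc\in 1+\CP_E^m$. This gives the factorization
\[
t = \bt(z,z)\cdot \bt(bc,b),
\]
in which the first factor lies in the center $Z$ while the second, call it $t_0$, is a diagonal element with all entries in $1+\CP_E^m$ (using that Galois conjugation preserves $\CP_E^m$ because $E/F$ is unramified). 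Hence $t_0\in T\cap K_m$, and since $d_m\in T$ commutes with $t_0$, we have $t_0 = d_m^{-1}t_0 d_m \in H_m$.

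The key point is that $\psi_m$ is trivial on $T\cap K_m$: the character $\tau_m$ depends only on the $(1,2)$ and $(2,4)$ entries, both of which vanish for diagonal matrices, so $\psi_m(t_0)=\tau_m(t_0)=1$. Lemma \ref{lemma21}(2) then yields $\pi(t_0)v_m=v_m$ and $\pi'(t_0)v_m'=v_m'$, and combining this with $\bt(z,z)\in Z$ and the hypothesis $\omega_\pi=\omega_{\pi'}$ gives
\[
W_{v_m}(t) = \omega_\pi(z)\,W_{v_m}(1) = \omega_\pi(z) = \omega_{\pi'}(z)\,W_{v_m'}(1) = W_{v_m'}(t),
\]
as required. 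There is no real obstacle in this argument; the only bookkeeping subtlety is verifying that the factor $t_0$ genuinely sits inside $K_m$ (and hence $H_m$), which is precisely where the assumption that $E/F$ is unramified enters, via $\Nm_{E/F}(1+\CP_E^m)=1+\CP_F^m$ (already used in Corollary \ref{cor23}) and $\overline{\CP_E^m}=\CP_E^m$.
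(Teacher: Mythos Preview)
Your proof is correct and follows essentially the same approach as the paper's: reduce to $T$ via the Whittaker transformation property, use Corollary~\ref{cor23} to constrain the nonvanishing locus, factor $t=\bt(z,z)\cdot t_0$ with $z\in E^1$ and $t_0\in T\cap H_m$, and conclude via $\psi_m(t_0)=1$ together with $\omega_\pi=\omega_{\pi'}$. You add a bit more detail (explicitly checking $t_0\in K_m$ via commutation with $d_m$, and why $\psi_m$ is trivial on diagonal elements), and your case split (``both vanish'' versus ``one nonzero'') is the contrapositive of the paper's, but the argument is the same. One cosmetic quibble: you reuse the letter $b$ for an element of $1+\CP_E^m$ after already using it for the element of $B$; renaming it would avoid confusion.
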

\begin{proof}
For $b\in B$, we can write $b=ut$ with $u\in U,t\in T$. Since $W_{v_m}(ut)=\psi_U(u)W_{v_m}(t)$ and $W_{v_m'}(ut)=\psi_U(u)W_{v'_m}(t)$, it suffices to prove that $W_{v_m}(t)=W_{v'_m}(t)$ for all $t\in T$. Suppose $t=\bt(a_1,a_2)$. By Corollary \ref{cor23}, if $a_2\notin E^1(1+P_E^m),$ or $a_1a_2^{-1}\notin 1+\CP_E^m$, we have $W_{v_m}(t)=0=W_{v_m'}(t)$. Now suppose that $a_2\in E^1(1+\CP_E^m)$ and $a_1a_2^{-1}\in 1+\CP_E^m$. We can write $a_2=za_2'$ with $z\in E^1,a_2'\in 1+P_E^m$. Then 
$$t=\bt(z,z)\bt(a_2'a_1a_2^{-1},a_2').$$
Since $\bt(z,z)$ is in the center $Z$ of $G$, we have $$W_{v_m}(t)=\omega_\pi(z)W_{v_m}(\bt(a_2'a_1a_2^{-1}, a_2')).$$
Since $a_2'\in 1+\CP_E^m, a_2'a_1a_2^{-1}\in 1+\CP_E^m$, we have $\bt(a_2'a_1a_2^{-1}, a_2')\in H_m$. Thus by Lemma \ref{lemma21} (2), $W_{v_m}(t')=\psi_m(\bt(a_2'a_1a_2^{-1}, a_2'))W_{v_m}(1)=W_{v_m}(1)=1$. Then $W_{v_m}(t)=\omega_\pi(z)$. The same argument shows that $W_{v_m'}(t)=\omega_{\pi'}(z)$. Since $\omega_\pi=\omega_{\pi'}$, we get $W_{v_m}(t)=W_{v_m'}(t)$.
\end{proof}

\subsection{A stability property of the Whittaker functions associated with Howe vectors.} 

We recall the Bruhat order on the Weyl group $\bW$, see \cite{Hu} for example. An element $w\in \bW$ can be written as a product of simple roots in a minimal length, say $w=s_{\gamma_1}\dots s_{\gamma_l}$ with $l=\textrm{length}(w)$. We say $w'\le w$ if $w'$ can be written as a product of sub-expression, i.e., 
$$w'=s_{\gamma_{i_1}}\dots s_{\gamma_{i_k}},$$
with $1\le i_1<i_2<\dots <i_k\le l$. This definition is independent of the choice of the minimal expression of $w$. We say $w'<w$ if $w'\le w$ and $w'\ne w$.

For $w\in \bW$, denote $U_w^+=\wpair{u\in U: wuw^{-1}\in U}$ and $U_w^-=\wpair{u\in U: wuw^{-1}\notin U}.$ Then
$$ U_w^+=\prod_{\gamma\in \Sigma_w^+}U_\gamma, \textrm{ and } U_w^-=\prod_{\gamma\in \Sigma_w^-}U_\gamma,$$
where $\Sigma_w^+=\wpair{\gamma\in \Sigma^+, w(\gamma)>0}$ and $\Sigma_w^-=\wpair{\gamma\in \Sigma^+, w(\gamma)<0}$.  Recall that $U_{w,m}^-=U_w^-\cap H_m$.

\begin{prop}\label{prop25}
Given $w\in \bW$. Let $a_t, 0\le t \le l(w)$ be a sequence of integers with $a_0=0$ and $a_t\ge t+a_{t-1}$ for all $t$ with $1\le t \le l(w)$. Let $m$ be an integer such that $m\ge 4^{a_{l(w)}}C$.
\begin{enumerate}
\item If $W_{v_k}(t\dot w')=W_{v_k'}(t \dot w'),$ for all $ w'<w , k\ge 4^{a_{l(w')}} C$, and $t\in T$, then
$$W_{v_m}(t\dot wu_w^-)=W_{v_m'}(t \dot wu_w^-),$$
for all $u_w^-\in U_w^--U_{w,m}^-$.
\item If $W_{v_k}(t\dot w')=W_{v_k'}(t\dot w'),$ for all $ w'\le w , k\ge 4^{a_{l(w')}} C$, and $t\in T$, then
$$W_{v_m}(g)=W_{v_m'}(g),$$
for all $g\in BwB$.
\end{enumerate}
\end{prop}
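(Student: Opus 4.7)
I would obtain (2) as a quick consequence of (1) together with the case $w'=w$ of its hypothesis, and prove (1) by a rank-one Bruhat reduction that rewrites Whittaker values at $t\dot w u_w^-$ in terms of values on a strictly smaller cell $Bw'B$, where the hypothesis applies.

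For (2), the Bruhat decomposition of $g\in BwB$ writes $g=ut\dot w u_w^-$ uniquely with $u\in U$, $t\in T$, $u_w^-\in U_w^-$. If $u_w^-\in U_{w,m}^-$, then $u_w^-\in H_m$ and Lemma \ref{lemma21}(2) gives $W_{v_m}(g)=\psi_U(u)\psi_m(u_w^-)W_{v_m}(t\dot w)$; the hypothesis at $w$ itself (together with the analogous identity for $v'_m$) concludes. If $u_w^-\notin U_{w,m}^-$, one invokes (1) after factoring out $\psi_U(u)$.

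For (1), write $u_w^-=\prod_{\gamma\in\Sigma_w^-}\bx_\gamma(r_\gamma)$ and isolate a coordinate $\gamma_0$ whose $r_{\gamma_0}$ violates the congruence defining $U_{w,m}^-$. Set $\mu:=-w(\gamma_0)\in\Sigma^+$; since $\gamma_0\in\Sigma_w^-$, the element $w':=s_\mu w$ has length $l(w)-1$. Combining the commutation $\dot w\,\bx_{\gamma_0}(r_{\gamma_0})\,\dot w^{-1}=\bx_{-\mu}(\pm r_{\gamma_0})$ with the rank-one Bruhat identity
\[
\bx_{-\mu}(r)=\bx_\mu(r^{-1})\,\mathbf{h}_\mu(r)\,\dot s_\mu\,\bx_\mu(r^{-1}),\qquad r\ne 0,
\]
(where $\mathbf{h}_\mu$ denotes the coroot of $\mu$) one obtains
\[
\dot w\,\bx_{\gamma_0}(r_{\gamma_0})=\bx_\mu(\pm r_{\gamma_0}^{-1})\;\mathbf{h}_\mu(\pm r_{\gamma_0})\;\dot{w'}\;\bx_{-\gamma_0}(\pm c\, r_{\gamma_0}^{-1}),
\]
in which the trailing factor has small coefficient because $r_{\gamma_0}$ is large. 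Commuting this trailing factor past the remaining $\bx_\gamma(r_\gamma)$'s of $u_w^-$ — each such commutation being a rescaling by a power of $r_{\gamma_0}^{-1}$ governed by $\mathrm{SL}_2$-identities — produces a factorization $t\dot w u_w^-=u_1\,t''\,\dot{w'}\,u_2$ with $u_1\in U$, $t''\in T$, and $u_2\in U^-\cap H_k$ at a well-defined lower level $k\ge 4^{a_{l(w')}}C$. Lemma \ref{lemma21}(2) then extracts $u_1$ as a $\psi_U$-factor and absorbs $u_2$ as $\psi_m(u_2)=1$, leaving $W_{v_m}(t\dot w u_w^-)=\psi_U(u_1)\,W_{v_k}(t''\dot{w'})$. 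The analogous identity for $v'_m$, together with the hypothesis of (1) at $w'<w$ and level $k$, yields the claimed equality. The numerical condition $a_t\ge t+a_{t-1}$ is exactly what forces $4^{a_{l(w)}}\ge 4\cdot 4^{a_{l(w')}}$, so a single reduction step costs at most a factor of $4$ in the congruence level; the base $4$ is the quadratic worst-case scaling between the torus element $d_m=\bt(p_F^{-3m},p_F^{-m})$ defining $H_m$ and the various root subgroups, whose entries in the matrix form of $H_m$ lie in ideals ranging from $\CP_E^{-5m}$ to $\CP_E^{7m}$.

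\textbf{Main obstacle.} The core difficulty is the case-by-case bookkeeping of these congruence levels. The shape of $H_m$ is highly asymmetric, $\mathrm{U}(2,2)$ has both short roots (living in $E$) and a long root (living in $F$), and each of the four positive roots $\alpha,\beta,\alpha+\beta,2\alpha+\beta$ contributes a different rescaling factor when pushed through $\dot w$. One must therefore verify, for each of the finitely many $(w,\gamma_0)$ pairs, that the trailing $\bx_{-\gamma_0}(\pm c\, r_{\gamma_0}^{-1})$ and the subsequent commutator rescalings of the other coordinates all land in the appropriate row of $H_k$. Since $l(w)\le 4$ and $|\bW|=8$, however, only a handful of cases arise, and each is settled by a direct $\mathrm{SL}_2$- or $\mathrm{SU}(1,1)$-type computation; this is the sense in which the small rank of $\mathrm{U}(2,2)$ makes the stability property verifiable by hand, as anticipated in the introduction.
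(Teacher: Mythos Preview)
Your high–level strategy for (2), and the idea of using a rank–one Bruhat identity to push $t\dot w u_w^-$ into a smaller cell, are the same as the paper's. But two steps in your sketch of (1) do not work as written, and they are precisely the points where the real content of the argument lies.

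\medskip

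\textbf{The level change $m\rightsquigarrow k$ is not an identity.} You write $W_{v_m}(t\dot w u_w^-)=\psi_U(u_1)\,W_{v_k}(t''\dot w')$, but $v_m$ and $v_k$ are different vectors; no group identity produces such an equation. In the paper the passage from level $m$ to a level $k$ with $3k\le m<4k$ goes through Lemma~\ref{lemma21}(3),
\[
W_{v_m}(g)=\frac{1}{\Vol(U_m)}\int_{U_m}W_{v_k}(gu)\,\psi_U^{-1}(u)\,du,
\]
so one must prove $W_{v_k}(gu)=W_{v'_k}(gu)$ for every $u\in U_m$. Only at level $k$ does the small element $\bx_{-\gamma_0}(-1/r_{\gamma_0})$ lie in $H_k$ (not in $H_m$, since the lower–unipotent congruence in $H_m$ is $\CP^{(2\mathrm{ht}(\gamma_0)+1)m}$ while $|r_{\gamma_0}^{-1}|\le q^{-(2\mathrm{ht}(\gamma_0)-1)m}$); this is exactly why the level must drop before one can invoke Lemma~\ref{lemma21}(2).

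\medskip

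\textbf{The one--step factorization does not exist.} After applying the rank--one identity to the $\gamma_0$--coordinate you still have the remaining factors $\prod_{\gamma\ne\gamma_0}\bx_\gamma(r_\gamma)$ sitting to the right of $\dot w'$, with $r_\gamma$ a priori large. Conjugating them past $\dot w'$ produces $\bx_{w'(\gamma)}(r_\gamma')$, and for some $\gamma$ one has $w'(\gamma)<0$; these are \emph{large} lower--unipotent elements, not in any $H_k$. So your claimed form $u_1 t''\dot w' u_2$ with $u_2\in U^-\cap H_k$ is not available in one stroke. The paper sidesteps this: it multiplies the whole element on the right by $\bx_{-\gamma_i}(-1/r_i)\in H_k$ (which leaves $W_{v_k}$ unchanged) and only asserts that the product lies in some $Bw'B$ with $w'<w$ (this is the Claim~$(*)$ checked case by case); one then \emph{iterates} the whole procedure inside $Bw'B$. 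The tail $\bx_{\gamma_{i-1}}(\tilde r_{i-1})\cdots$ may fail to lie in $H_k$, forcing a further drop to $k_1$ with $3k_1\le k<4k_1$, and so on. This iteration, with up to $l(w)$ sub--steps each costing a factor $\le 4$, is what the inequality $a_t\ge t+a_{t-1}$ encodes; your reading ``one step, one factor of $4$'' undercounts.

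\medskip

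A minor point: $l(s_\mu w)=l(w)-1$ is false when $\mu$ is not simple. For instance with $w=w_0$ and $\gamma_0=\alpha+\beta$ one has $s_\mu w=w_0w_1=s_\alpha$, of length $1$. What the paper uses (and all that is needed) is $w':=ws_{\gamma_0}<w$ in Bruhat order, which is checked directly in each case.
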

\noindent \textbf{Remark:} We can take the sequence $a_t=t^2$ as in \cite{Ba1}. We can also take the sequence $a_0=0, a_1=1, a_2=3, a_3=6$ and $a_4=10$ in our case.

\begin{proof}This is essentially Lemma 6.2.6 of \cite{Ba1}. Since \cite{Ba1} is not published, we explain more about the proof. For $w\le w_2=s_\alpha s_\beta s_\alpha$, Proposition \ref{prop25} is proved in a more general setting in Theorem 3.11 \cite{Zh2}, which in fact justified an ambiguity in the original proof in Lemma 6.2.6 of \cite{Ba1}. Now we consider the case when $w=w_1=s_\beta s_\alpha s_\beta,$ and $w=w_0=(s_\alpha s_\beta)^2$.  We order the set $\Sigma^-$ by height, i.e., we denote $\gamma_1=\alpha, \gamma_2=\beta, \gamma_3=\alpha+\beta, \gamma_4=2\alpha+\beta$. Here the order of $\alpha$ and $\beta$ are not important.

From the proof of Theorem 3.11 \cite{Zh2}, to prove our proposition for $w=w_0$ or $w_1$, it suffices to check the following Claim $(*)$:

Claim $(*)$: Suppose that $w=w_1$ or $w_0$. Given $g=tw \bx_{\gamma_t}(r_k)\dots  \bx_{\gamma_i}(r_i)\in G$ with $t\in T, r_i\ne 0, $ and $w(\gamma_i)<0$, where the subscript of $\gamma$ is decreasing, i.e., $k> k-1\ge \dots >i$, then $g\bx_{-\gamma_i}(-r_i^{-1})\in Bw'B$ for some $w'<w$.

To prove this Claim, we need to use the Chevalley relation $ \bx_{\gamma_i}(r_i) \bx_{-\gamma_i}(-1/r_i)\in s_{\gamma_i}B$, where $s_{\gamma_i}$ is the reflection associated with $\gamma_i$. For this relation, see \cite{St}.

We will check Claim $(*)$ case by case. First suppose that $i=1 $ or $2$, so that $\alpha_i$ is simple. Then by the above Chevalley relation, we have
$$g\bx_{-\gamma_i}(-r_i^{-1})=tw \bx_{\gamma_t}(r_t) \dots \bx_{\gamma_{i+1}}(r_{i+1})s_{\gamma_i}b, \textrm{ for some } b\in B. $$
Using the relation $s_{\gamma_i}^{-1}\bx_{\gamma_j}(r_j)s_{\gamma_i}=\bx_{s_{\gamma_i}(\gamma_j)}(r_j)$, we get 
$$ g\bx_{-\gamma_i}(-r_i^{-1})=tws_{\gamma_j} \bx_{s_{\gamma_i}(\gamma_t)}(r_t) \dots \bx_{s_{\gamma_i}(\gamma_{i+1})}(r_{i+1})b.$$
Since $s_{\gamma_i}(\gamma_j)>0$, we get $ \bx_{s_{\gamma_i}(\gamma_j)}(r_j)\in U$, and thus 
$$g\bx_{-\gamma_i}(-r_i^{-1})\in Bws_{\gamma_i} B. $$
Now the assertion follows since $w'=ws_{\gamma_i}<w$ by the assumption $w (\gamma_i)<0$.

Next we consider the case $i=3$, so that $g=tw\bx_{2\alpha+\beta}(r_4)\bx_{\alpha+\beta}(r_3)$. We can check that $s_{\alpha+\beta}=s_\beta s_\alpha s_\beta=w_1$. Using the above Chevalley relation, we can get 
$$g\bx_{-(\alpha+\beta)}(-1/r_3)\in B w U_{2\alpha+\beta} w_1 B.$$
Note that $w_1=w_1^{-1}$ and $w_1U_{2\alpha+\beta}w_1=U_{w_1(2\alpha+\beta)}=U_{-\beta}\subset Bs_\beta B$, thus we get 
$$ g\bx_{-(\alpha+\beta)}(-1/r_3)\in Bww_1 Bs_\beta B.  $$
If $w=w_1$, then $ g\bx_{-(\alpha+\beta)}(-1/r_3)\in Bw' B$ with $w'=s_\beta<w_1$. If $w=w_0$, then $ww_1=s_\alpha$, and thus $g\bx_{-(\alpha+\beta)}(-1/r_3)\in Bs_\alpha Bs_\beta B=Bs_\alpha s_\beta B$. The assertion follows with $w'=s_\alpha s_\beta<w_0$.

Finally we consider the case $i=4$, so that $g=tw \bx_{2\alpha+\beta}(r_4)$. We have $s_{2\alpha+\beta}=s_\alpha s_\beta s_\alpha=w_2$. Thus from the Chevalley relation, we get
$$ g\bx_{-(2\alpha+\beta)}(-r_4^{-1})\in Bww_2 B.$$
It suffices to check that $w'=ww_2<w$. In fact, we have $w_1w_2=s_\alpha s_\beta<w_1$, and $w_0w_2=s_\beta<w_0$. The proof of Claim $(*)$ and hence the proposition is complete.
\end{proof}

As a direct consequence of Proposition \ref{prop25}, Lemma \ref{lemma22} (2) and Corollary \ref{cor24}, we have the following
\begin{cor}\label{cor26} 
 Let $a_t$ be a sequence as in Proposition $\ref{prop25}$. Then
\begin{enumerate}
\item if $w=1, s_\alpha, s_\beta, s_\alpha s_\beta, s_\beta s_\alpha$, we have $$W_{v_m'}(g)=W_{v_m'}(g),$$
for all $ g\in BwB$  and $m\ge 4^{a_{l(w)}}C$; 
\item if $w=w_1, w_2$, we have
$$W_{v_m}(t \dot w u^-_w)=W_{v_m'}(t \dot w u^-_w),$$
for $u_w^-\in U_w^- -U_{w,m}^-$ for $m\ge 4^{a_{l(w)}}C$ and all $t\in T$.
\end{enumerate}
\end{cor}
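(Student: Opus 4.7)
The plan is to deduce the corollary by induction on $l(w)$ using the Bruhat order, combining Proposition \ref{prop25} (which propagates equality from strictly smaller Weyl elements and from $T\dot w'$-translates onto either $BwB$ or the complement of $U_{w,m}^-$) with Corollary \ref{cor24} (the base case $w=1$) and Lemma \ref{lemma22}(2) (which supplies the needed vanishing at $t\dot w'$ for $w' \in \{s_\alpha,\,s_\beta,\,s_\alpha s_\beta,\,s_\beta s_\alpha\}$).

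For part (1), start with $w=1$: here $BwB = B$ and $4^{a_0}C = C$, so the assertion is precisely Corollary \ref{cor24}. Next, for each $w \in \{s_\alpha,\,s_\beta,\,s_\alpha s_\beta,\,s_\beta s_\alpha\}$ I would invoke Proposition \ref{prop25}(2). Its hypothesis requires $W_{v_k}(t\dot w') = W_{v_k'}(t\dot w')$ for every $w' \le w$, every $t \in T$, and every $k \ge 4^{a_{l(w')}}C$. For $w' = 1$ this is Corollary \ref{cor24}, while for any nontrivial $w' \le w$ (which again lies in the same short list) Lemma \ref{lemma22}(2) yields $W_{v_k}(t\dot w') = 0 = W_{v_k'}(t\dot w')$. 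Hence the hypothesis is satisfied, Proposition \ref{prop25}(2) applies, and (1) holds for $w$.

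For part (2), take $w = w_1 = s_\beta s_\alpha s_\beta$ or $w = w_2 = s_\alpha s_\beta s_\alpha$. Reading off proper subexpressions from a reduced expression shows that $\{w' \in \bW : w' < w\} \subseteq \{1,\,s_\alpha,\,s_\beta,\,s_\alpha s_\beta,\,s_\beta s_\alpha\}$, and the already-proved part (1) supplies $W_{v_k}(t\dot w') = W_{v_k'}(t\dot w')$ for all such $w'$, all $t \in T$, and all $k \ge 4^{a_{l(w')}}C$. Proposition \ref{prop25}(1) then yields exactly the conclusion of (2).

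The only point that might cause concern is the bookkeeping of the thresholds $4^{a_{l(w')}}C$: one needs that choosing the single $m \ge 4^{a_{l(w)}}C$ demanded by the conclusion of Proposition \ref{prop25} already meets the inductive threshold $k \ge 4^{a_{l(w')}}C$ for every $w'$ appearing in its hypothesis. This is automatic because the defining inequality $a_t \ge a_{t-1} + t$ makes the sequence $a_t$ strictly increasing, so $w' \le w$ forces $a_{l(w')} \le a_{l(w)}$. This monotonicity is the only subtlety; once it is observed, the corollary is a clean assembly of the three cited results.
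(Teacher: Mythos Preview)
Your proposal is correct and is exactly the argument the paper has in mind: the paper states only that the corollary is ``a direct consequence of Proposition \ref{prop25}, Lemma \ref{lemma22}(2) and Corollary \ref{cor24},'' and you have spelled out precisely how those three ingredients combine. The one minor remark is that for part~(2) you route through the full strength of part~(1) to obtain $W_{v_k}(t\dot w')=W_{v_k'}(t\dot w')$, whereas it is marginally more direct to note (as you already did inside the proof of part~(1)) that Lemma~\ref{lemma22}(2) and Corollary~\ref{cor24} give this equality at $t\dot w'$ immediately; but this is purely cosmetic.
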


\section{Induced representations and intertwining operator}
In this section, we will construct some sections in the induced representations $I(s,\eta)$ and $I(s,\tau)$ for a given quasi-character $\eta$ of $E^\times$ and an irreducible smooth representation $\tau$ of $\GL_2(E)$. Since the construction for sections in $I(s,\eta)$ is quite similar in the case $I(s,\tau)$ and the proof is easier, we only write down the statement and the proof in the case $I(s,\tau)$.

Let $\hat N$ be the opposite of $N$, i.e., $\hat N$ consists of matrices of the form 
$$\hat \bn(b):=\begin{pmatrix}1& \\ b&1 \end{pmatrix}, b\in \Herm_2(F).$$
Recall that $N^{(2)}$ is the unipotent subgroup of $\GL_2(E)$, and under the embedding $\GL_2(E)\cong M\incl G$, we have $N^{(2)}\cong U_{\alpha}$.

Let $X$ be an open compact subgroup of $N$. For $x\in X,i>0$, define $A(x,i)=\wpair{\hat n\in \hat N: \hat n x\in P\cdot \hat N_i}$.
\begin{lem}{\label{lemma31}}
\begin{enumerate}
\item For any positive integer $c$, there exists an integer $i_1=i_1(X,c)$ such that for all $i\ge i_1$,  $x\in X$ and $\hat n\in A(x,i)$, we can write $$\hat n x=n\bm(a)\hat n_0,$$
with $n\in N, \hat n_0\in \hat N_i$ and $a\in K_c^{(2)}:=1+\Mat_{2\times 2}(\CP_E^c)$.

\item There exists an integer $i_0=i_0(X)$ such that $A(x,i)$ is independent of $x$ for all $i\ge i_0$, i.e., $A(x_1,i)=A(x_2,i)$ for all $x_1,x_2\in X$. In fact, we can choose $i_0=i_0(X)$ such that $A(x,i)=\hat N_i$ for all $i\ge i_0$. Here $\hat N_i=H_i \cap \hat N.$
\end{enumerate}
\end{lem}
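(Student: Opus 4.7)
My plan is to make the condition $\hat n x \in P\cdot \hat N_i$ concrete via an explicit LDU decomposition, and then read off both parts from a Taylor expansion. Writing $\hat n = \hat\bn(b)$ and $x = \bn(y)$ with $y \in \Herm_2(F)$, the matrix identity
\begin{equation*}
\hat\bn(b)\,\bn(y) \;=\; \bn\bigl(y(I+by)^{-1}\bigr)\,\bm\bigl((I+yb)^{-1}\bigr)\,\hat\bn\bigl((I+by)^{-1}b\bigr)
\end{equation*}
holds whenever $I+by$ is invertible. Hence $\hat n x \in P\cdot \hat N_i$ is exactly the condition $\hat\bn(b') \in \hat N_i$, where $b' = (I+by)^{-1}b$. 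The inverse substitution is $b = b'(I-yb')^{-1}$; and a direct manipulation using $(I+yb)(I-yb') = I$ gives the clean identity $a = (I+yb)^{-1} = I - yb'$ for the Levi piece.

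For part (2), I would observe that as $x$ ranges over the compact set $X$, the matrix $y$ stays in a bounded subset $Y \subset \Herm_2(F)$, so that the entries of $y$ lie in $\CP_E^{-M}$ for a fixed $M = M(X)$. Expanding geometrically,
\begin{equation*}
b' \;=\; (I+by)^{-1}b \;=\; b - byb + (by)^2 b - \cdots,
\end{equation*}
I would show that for $i$ large enough (uniformly in $y\in Y$), every term from $byb$ onward lies in a strictly smaller $\hat N$-pattern than $b$ itself. Concretely, if $b$ satisfies the entry bounds of $\hat N_i$ (namely $b_{11}\in \CP_F^{7i}$, $b_{12}\in\CP_E^{5i}$, $b_{22}\in \CP_F^{3i}$), then a direct entrywise bound shows $(byb)_{11}\in \CP_F^{10i-M}$, $(byb)_{12}\in \CP_E^{8i-M}$, $(byb)_{22}\in \CP_F^{6i-M}$, which is inside the pattern of $\hat N_i$ once $i\geq M$, say. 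The higher-order terms are even smaller, so $b' \in $ pattern of $\hat N_i$. The symmetric argument applied to $b = b'(I-yb')^{-1}$ gives the reverse inclusion. Hence $A(x,i) = \hat N_i$ for $i \geq i_0(X)$.

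For part (1), once $\hat n \in A(x,i)$ we have $\hat n_0 = \hat\bn(b')$ with $b'$ in the $\hat N_i$-pattern, and the decomposition above gives
\begin{equation*}
a \;=\; (I+yb)^{-1} \;=\; I - yb'.
\end{equation*}
Since $y$ has entries in $\CP_E^{-M}$ and the entries of $b'$ are at worst in $\CP_E^{3i}$, the entries of $yb'$ lie in $\CP_E^{3i - M}$. Choosing $i_1 = i_1(X,c)$ so that $3i_1 - M \geq c$ then forces $a \in K_c^{(2)}$, which gives (1).

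The main technical obstacle is the book-keeping in part (2): because the allowed sizes $7i,5i,3i$ of the four entries of $\hat N_i$ are not uniform, the preservation of this exact pattern under $b\mapsto (I+by)^{-1}b$ must be verified entry by entry, and one has to be careful that the ``loss'' of a factor $q_E^M$ coming from $y\in Y$ is absorbed by the quadratic improvement in $b$. Once this is tracked, both (1) and (2) are immediate consequences of the explicit LDU formula.
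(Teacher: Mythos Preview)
Your proposal is correct and follows essentially the same route as the paper: both arguments write out the Gauss/LDU decomposition of $\hat n x$ explicitly, obtain the Levi part as $a = I + (\text{entry of }X)\cdot(\text{entry of }\hat N_i)$, and then verify the $\hat N_i$-pattern entry by entry. The only cosmetic difference is in part~(2): the paper inverts the $2\times 2$ matrix $I+x_0\hat y_0$ by Cramer's rule and tracks $\det(\hat y_0)\in\CP_E^{10i}$ through the adjugate formula, whereas you expand $(I+by)^{-1}b$ as a geometric series and bound $byb$ directly; your version has the advantage of generalizing past the $2\times 2$ case, while the paper's Cramer computation gives slightly sharper explicit constants.
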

In the $\GL_n$ case, this is Lemma 4.1 of \cite{Ba1}. The proof in our case is similar. 
\begin{proof}
 Since $X$ is compact, there is a constant $D_X$ such that $|x_{l,j}|<D_X$, for all $x=\bn((x_{l,j}))\in X\subset N$. 

For $x\in X, \hat n\in A(x,i)$, we assume that $\hat n x=p\hat y^{-1}$ with $p\in P, \hat y^{-1}\in \hat N_i$. We have 
$$\hat n^{-1}p= x \hat y$$
Let $$\hat n^{-1}=\begin{pmatrix}1& \\ \hat b& 1 \end{pmatrix}=\hat \bn(\hat b),p=\bn(b')\bm(a), \textrm{ for }\hat b, b'\in \Herm_2(F), a\in \GL_2(E).$$
Then $$\hat n^{-1}p=\hat \bn(\hat b)\bn(b')\bm(a)=\begin{pmatrix}a& b' {}^t\bar a^{-1}\\ \hat ba& (\hat b b'+1){}^t\bar a^{-1}\end{pmatrix}$$
On the other hand, if we assume $x=\bn(x_0), \hat y=\hat \bn(\hat y_0)$ with $x_0, \hat y_0\in \Herm_2(F)$, then we have
$$x\hat y=\begin{pmatrix} 1& x_0\\ &1\end{pmatrix}\begin{pmatrix}1& \\ \hat y_0 & 1 \end{pmatrix}=\begin{pmatrix}1+x_0\hat y_0 & x_0\\ \hat y_0 &1\end{pmatrix}.$$ 
From $\hat n^{-1}p= x\hat y$, we get $$a=1+x_0\hat y_0$$ and 
$$ \hat b=\hat y a^{-1}=\hat y_0 (1+x_0\hat y_0)^{-1}.$$
Since the entries of $x_{0}$ are bounded, and entries of $\hat y_{0}$ go to zero as $i\ra \infty$. Thus for any positive integer $c$, we can take $i_1=i_1(X,c)$ such that if $i\ge i_1$, we have $$a=1+x_0 \hat y_0\in K^{(2)}_c.$$ Thus (1) follows.

To prove (2), we write $\hat b=(\hat b_{jk})$, $x_0=(x_{jk})$ and $\hat y_0=(\hat y_{jk})$, $j,k=1,2$. By Cramer's rule, we have
$$(1+x_0\hat y_0)^{-1}=\det(1+x_0 \hat y_0)^{-1}\begin{pmatrix} 1+x_{21}\hat y_{12}+x_{22}\hat y_{22} & -x_{11}\hat y_{12}-x_{12}\hat y_{22}\\ -x_{21}\hat y_{11}-x_{22}\hat y_{21} & 1+x_{11}\hat y_{11}+x_{12}\hat y_{21}\end{pmatrix}.$$
From $\hat b=\hat y_0 (1+x_0\hat y_0)^{-1}$, we can solve that
\begin{equation}\label{eq31}\hat b=\det(1+x_0\hat y_0)^{-1}\begin{pmatrix}\hat y_{11}+\det(\hat y_0) x_{22} & \hat y_{12}-\det(\hat y_0) x_{12} \\
\hat y_{21}-\det(\hat y_0) x_{21}& \hat y_{22}+\det(\hat y_0) x_{11}  \end{pmatrix} \end{equation}

Note that $\hat y\in \hat N_i$, i.e., 
\begin{equation}\label{eq32}\hat y_0\in \begin{pmatrix}\CP_E^{7i} & \CP_E^{5i}\\ \CP_E^{5i} & \CP_E^{3i} \end{pmatrix}.\end{equation}
We have $\det(\hat y_0)\in \CP_E^{10 i}$. We can choose $i_2=i_2(X)$ large enough such that for $i\ge i_2$, we have
$$q_E^{-10i}D_X\le q_E^{-7i}.$$
Then \begin{equation}\label{eq33}\det(\hat y_0)x_{jk}\in \CP_E^{7i}, i\ge i_2, \end{equation}  for all $j,k=1,2$. Now we take $i_0(X)=\max\wpair{i_2(X), i_1(X,1)}$. Then for $i\ge i_0(X)$, we have $\det(1+x_0\hat y_0)\in 1+\CP_E\subset \CO_E^\times$. From Eq. (\ref{eq31}) and Eq. (\ref{eq33}), we get
$$ \hat b\in  \begin{pmatrix}\CP_E^{7i}+\CP_E^{7i} & \CP_E^{5i}+\CP_E^{7i}\\ \CP_E^{5i} +\CP_E^{7i}& \CP_E^{3i}+\CP_E^{7i} \end{pmatrix} \subset  \begin{pmatrix}\CP_E^{7i} & \CP_E^{5i}\\ \CP_E^{5i} & \CP_E^{3i} \end{pmatrix}.$$
Thus $\hat n^{-1}=\hat \bn(\hat b)\in \hat N_i,$ and hence $\hat n\in \hat N_i$. This shows $A(x,i)\subset \hat N_i$. 

  To show every element $\hat y\in \hat N_i$ is contained in $A(x,i)$ for $i$ large enough. As above, we write $\hat y=\hat \bn(\hat y_0)$ and $x=\bn(x_0)$. We first notice that, for $ i$ large enough, we can assume $\det(1+\hat y_0 x_0 )\ne 0$. From this, it is easy to check that $\hat y x\in P\hat N$, and thus we can write $\hat y x=p \hat n$ for $p\in P, \hat n\in \hat N$. A similar argument as above will show that $\hat n\in \hat N_i$ for $i$ large. In fact, we only have to switch the role of $\hat y$ and $\hat n$ in the above argument. Thus $\hat y\in A(x,i)$ for $i$ large. This finishes the proof of (2).
\end{proof}

 For $i>0$ and a vector $\epsilon\in V_\tau$, we consider a function $\xi_s^{i,\epsilon}:G\ra V_\tau$ defined by 
$$\xi_s^{i,\epsilon}(g)=\left\{\begin{array}{lll}|\det(a)|^{s+1/2}_E \tau(a)\epsilon,& \textrm{ if } g=n\bm(a)\hat n, n\in N, a\in \GL_2(E), \hat n\in \hat N_i,\\ 0, &\textrm{ otherwise.}\end{array}\right.$$

\begin{lem}{\label{lemma32}}
There is an integer $i_2=i_2(\epsilon)$ such that if $ i\ge i_2$, $\xi_s^{i,\epsilon}$ defines an element in $I(s,\tau)=\Ind_P^G(\tau_{s-1/2})$, where $\tau_{s-1/2}=\tau |\det|^{s-1/2}$.
\end{lem}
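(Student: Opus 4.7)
The plan is to verify three things about $\xi_s^{i,\epsilon}$: it is well defined, it is left $P$-equivariant with the correct weight, and it is smooth. The first two are essentially formal. On the open Bruhat cell $P\hat N$, the multiplication map $N\times M\times\hat N\to P\hat N$ is a bijection, so the decomposition $g=n\bm(a)\hat n$ and in particular the Levi factor $a$ are uniquely determined by $g$; this makes the defining formula unambiguous on the open subset $P\hat N_i$. The set $P\hat N_i$ is stable under left multiplication by $P$, and for $p=n_0\bm(a_0)\in P$ and $g=n\bm(a)\hat n\in P\hat N_i$ one has
\[
pg=\bigl(n_0\cdot\bm(a_0)n\bm(a_0)^{-1}\bigr)\bm(a_0 a)\hat n\in P\hat N_i,
\]
from which the equivariance $\xi_s^{i,\epsilon}(pg)=|\det(a_0)|_E^{s+1/2}\tau(a_0)\xi_s^{i,\epsilon}(g)$ is immediate.

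The substantive task is smoothness, i.e., producing a compact open subgroup $K\subset G$ under which $\xi_s^{i,\epsilon}$ is right-invariant. Using smoothness of $(\tau,\epsilon)$, first choose $c\ge 1$ with $\tau(K_c^{(2)})\epsilon=\epsilon$. Pick a compact open subgroup $X\subset N$ and an integer $i'$, and set $i_2=i_2(\epsilon):=\max\{i',i_0(X),i_1(X,c)\}$ from Lemma~\ref{lemma31}. Next choose $K$ to be a sufficiently small compact open subgroup of $G$ admitting an Iwahori-type factorization $K=(K\cap N)(K\cap M)(K\cap\hat N)$ with $K\cap N\subset X$, $K\cap M\subset \bm(K_c^{(2)})$, and $K\cap\hat N\subset\hat N_{i'}$ (for example, a sufficiently deep principal congruence subgroup). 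For $g=n\bm(a)\hat n\in P\hat N_i$ with $i\ge i_2$ and $k=n_k\bm(a_k)\hat n_k\in K$, we reorder $\hat n\cdot n_k$ using Lemma~\ref{lemma31}: part (2) gives $\hat n n_k=n'\bm(a')\hat n''$ with $\hat n''\in\hat N_i$, and part (1) gives $a'\in K_c^{(2)}$. Hence
\[
gk=\bigl(n\cdot\bm(a)n'\bm(a)^{-1}\bigr)\,\bm(aa'a_k)\,\bigl(\bm(a_k)^{-1}\hat n''\bm(a_k)\cdot\hat n_k\bigr),
\]
and the trailing factor still lies in $\hat N_i$, since a direct computation shows $\bm(K_c^{(2)})$ normalizes $\hat N_i$ (conjugation only perturbs entries within the same prescribed powers of $\CP_E$), and $\hat n_k\in\hat N_{i'}\subset\hat N_i$. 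Therefore $gk\in P\hat N_i$ with Levi part $aa'a_k$; since $a'a_k\in K_c^{(2)}$ we have $|\det(a'a_k)|_E=1$ and $\tau(a'a_k)\epsilon=\epsilon$, yielding $\xi_s^{i,\epsilon}(gk)=\xi_s^{i,\epsilon}(g)$.

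If $g\notin P\hat N_i$, then $gk\in P\hat N_i$ would, by applying the same reordering argument to $(gk)\cdot k^{-1}$ (possible since $K$ is a group), force $g\in P\hat N_i$, a contradiction. Hence $\xi_s^{i,\epsilon}$ is right $K$-invariant, hence locally constant, hence smooth. The main obstacle throughout is coordinating the parameters $c$, $i'$, and $X$ so that all the conjugations and products that occur remain inside the intended compact open sets; the decisive input that makes the argument go through is Lemma~\ref{lemma31}, which converts the ``reverse-order'' product $\hat n\cdot n_k$ into a standard Bruhat factorization $n'\bm(a')\hat n''$ with the Levi piece $a'$ pushed into an arbitrarily deep congruence subgroup $K_c^{(2)}$ of $\GL_2(E)$.
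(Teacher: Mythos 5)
Your overall strategy is the right one and matches the paper's: reduce smoothness to right-invariance under the three pieces of an Iwahori factorization, use Lemma~\ref{lemma31} to convert the ``reverse-order'' product $\hat n\cdot n_k$ into a Bruhat factorization with the Levi part in $K_c^{(2)}$, and absorb the $M$-contribution using the $K_c^{(2)}$-fixedness of $\epsilon$. However the execution has two concrete errors, both traceable to the same decision: you tried to take the invariance subgroup $K$ shallow (depth tied only to $c$, $X$, $i'$) rather than letting it depend on $i$.

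First, the assertion that $\bm(K_c^{(2)})$ normalizes $\hat N_i$ is false. Writing $a_0=1+\epsilon_0$ with $\epsilon_0\in\Mat_{2\times 2}(\CP_E^c)$, one computes $\bm(a_0)^{-1}\hat\bn(b)\bm(a_0)=\hat\bn({}^t\bar a_0\, b\, a_0)$, and the correction terms in ${}^t\bar a_0\, b\, a_0-b$ involve products such as $\bar\epsilon_{21} b_{21}\in\CP_E^{c+5i}$ landing in the $(1,1)$ slot, which must lie in $\CP_E^{7i}$. That requires $c\ge 2i$, which fails once $i$ exceeds $c/2$. So for fixed $c$ and $i$ large, conjugation by $\bm(K_c^{(2)})$ pushes elements of $\hat N_i$ out of $\hat N_i$, and the step $\bm(a_k)^{-1}\hat n''\bm(a_k)\in\hat N_i$ breaks. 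The paper avoids this by taking the invariance group to be $K_{7i}$, so that $a_0\in K_{7i}^{(2)}$, in which case the correction terms are $O(\CP_E^{7i+3i})$ and safely inside $\hat N_i$.

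Second, the inclusion ``$\hat n_k\in\hat N_{i'}\subset\hat N_i$'' is reversed. Since $i\ge i_2\ge i'$ and the groups $\hat N_j$ shrink as $j$ grows, one has $\hat N_i\subset\hat N_{i'}$, not the other way. Knowing only that $K\cap\hat N\subset\hat N_{i'}$ therefore does not put $\hat n_k$ into $\hat N_i$, which is what you actually need to keep $gk\in P\hat N_i$. Again the fix is to let $K$ depend on $i$: with $K=K_{7i}$ one has $\hat N\cap K_{7i}\subset\hat N_i$ automatically. A related defect is that $X$ and $i'$ are never pinned down, so $i_2=\max\{i',i_0(X),i_1(X,c)\}$ is not actually a well-defined function of $\epsilon$; the paper fixes $X=N\cap K_c$ (and takes the $\max$ with $c$), making $i_2$ depend only on $\epsilon$ and $\tau$. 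With the choice $K=K_{7i}$, your reordering argument via Lemma~\ref{lemma31} goes through essentially as in the paper and the proof closes.
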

\begin{proof}
It is clear that $\xi_s^{i,\epsilon}(n\bm(a)g)=\delta_P^{1/2}(a)\tau_{s-1/2}(a)\xi_s^{i,\epsilon}(g)$, where $\delta_P$ is the modulus character of the Siegel parabolic subgroup $P$. We need to check that for $i$ large enough, there is an open compact subgroup $H_{i,\epsilon}\subset G$ such that $$\xi_s^{i,\epsilon}(gh)=\xi_s^{i,\epsilon}(g), \forall h\in H_{i,\epsilon}.$$
Let $c>0$ be an integer such that $\epsilon$ is fixed by $K_c^{(2)}=1+\textrm{Mat}_{2\times 2}(\CP_E^c)\subset \GL_2(E)$ under $\tau$. Recall that $K_c$ is used to denote the congruence subgroup $G\cap (1+\Mat_{4\times 4}(\CP_E^c))$ of $G$. Let $i_2(\epsilon,\tau)=\max\{c,i_0(N\cap K_c),i_1(N\cap K_c,c)\}$, where $i_0$ and $i_1$ are defined in Lemma \ref{lemma31}, and $N_c=N\cap J_c$ is defined in $\S$3. By the Lemma {\ref{lemma31}}, we have $A(x,i)=\hat N_i$ for $x\in N_c$, $i\ge i_2(\epsilon,\tau)$.

We take $H_{i,\epsilon}=K_{7i}$. We have the decomposition $K_{7i}=(\hat N\cap K_{7i}) (M\cap K_{7i})(N\cap K_{7i})$. For $h\in \hat N\cap K_{7i}\subset \hat N_i$, it is clear that $\xi^{i,\epsilon}_s(gh)=\xi^{i,\epsilon}_s(g)$. For $h=\bm(a_0)\in M\cap K_{7i}$ with $a_0\in K^{(2)}_{7i}= 1+\textrm{Mat}_{2\times 2}(\CP_E^{7i})$, and $\hat n \in \hat N_i$, we have
$$\bm(a_0)^{-1}\hat n \bm(a_0)\in \hat N_i$$
by an explicit calculation.

From this we can check that $g\in P\hat N_i$ if and only if $g\bm(a_0)\in P\hat N_i$. Moreover,
$$\xi_s^{i,\epsilon}(n\bm(a)\hat n \bm(a_0))=\xi_s^{i,\epsilon}(n\bm(aa_0) \bm(a_0^{-1})\hat n \bm(a_0))$$
$$=|\det(aa_0)|^{s+1/2}\tau(aa_0)\epsilon= |\det(a)|^{s+1/2}\tau(a)\epsilon=\xi_s^{i,\epsilon}(n\bm(a) \hat n),$$
where we used $i>c$ and hence $\tau(a_0)\epsilon=\epsilon$ for $a_0\in K^{(2)}_{7i}$.

Next, we assume that $h\in N\cap K_{7i}\subset N\cap K_c$. By the above lemma and our assumption, we have $$A(h,i)=A(h^{-1},i) =\hat N_i.$$
In particular, for $\hat n\in \hat N_i$, we have $\hat n h\in P\cdot \hat N_i$ and $\hat n h^{-1}\in P\cdot \hat N_i$. Thus $g\in P\hat N_i$ if and only if $gh\in P\hat N_i$. Moreover, by Lemma \ref{lemma31} (1), we have $\hat n h=n_0\bm(a_0)\hat n_0$ with $a_0\in K_c^{(2)}$, then we have
$$\xi_s^{i,\epsilon}(n\bm(a)\hat n h)=\xi_s^{i,\epsilon}(n\bm(a)n_0\bm(a^{-1})\bm(aa_0)\hat n_0)$$
$$=|\det(aa_0)|^{s+1/2}\tau(aa_0)\epsilon=|\det(a)|^{s+1/2}\tau(a)\epsilon=\xi_s^{i,\epsilon}(n\bm(a)\hat n).$$
This shows that $\xi_s^{i,\epsilon}(gh)=\xi_s^{i,\epsilon}(g)$ for all $h\in K_{7i}$. The proof is complete.
\end{proof}
Consider the function $f_s^{i,\epsilon}(g):=f_{\xi^{i,\epsilon}_s}(g)=\lambda(\xi^{i,\epsilon}_s(g)),$ where $\lambda$ is a fixed Whittaker functional of $\tau$.  Then $\supp f_s^{i,\epsilon}=P\cdot \hat N_i$ and we have
\begin{equation}\label{eq34}f^{i,\epsilon}_s(n\bm(a) \hat n)=|\det(a)|^{s+1/2}W^{(2)}_{\epsilon}(a),\end{equation}
where $W^{(2)}_\epsilon(a)=\lambda(\tau(a)\epsilon)$ is the Whittaker function of $\tau$ associated to the vector $\epsilon\in V_\tau$.

Now we consider $\tilde \xi^{i,\epsilon}_{1-s}=M(s)\xi^{i,\epsilon}_s\in I(1-s,\tau^*)$.  By definition, we have
$$\tilde \xi^{i,\epsilon}_{1-s}(g)=\int_{N}\xi_s^{i,\epsilon}(\dot w_1^{-1}n g)dn.$$
Let $X$ be an open compact subgroup of $N$, we evaluate $\tilde \xi$ at $\dot w_1 x$ for $x\in X$. 
\begin{prop}{\label{prop33}}
There is an integer $I=I(X,\epsilon)$ such that for $i\ge I$, we have $\tilde \xi^{i,\epsilon}_{1-s}(w_1x)=\vol(\hat N_i)\epsilon$ for all $x\in X$.
\end{prop}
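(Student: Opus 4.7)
My plan is a direct computation: plug $g = \dot w_1 x$ into the intertwining integral and use Lemma~\ref{lemma31} to localize the support and evaluate the integrand. Starting from
$$\tilde\xi^{i,\epsilon}_{1-s}(\dot w_1 x) = \int_N \xi^{i,\epsilon}_s(n^{-1}\dot w_1\cdot \dot w_1 x)\,dn,$$
I would substitute $n = \dot w_1 \hat n \dot w_1^{-1}$ with $\hat n \in \hat N$; this is a bijection because $\dot w_1 \hat N \dot w_1^{-1} = N$, as follows from the action of $w_1 = s_\beta s_\alpha s_\beta$ on the positive roots computed in Section~2. Since $\dot w_1^2$ is central in $G$, after absorbing the central scalar and the Jacobian into the Haar-measure normalisation built into $M(s)$, the integral takes the shape $\int_{\hat N}\xi^{i,\epsilon}_s(\hat n x)\,d\hat n$.

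Next I would exploit the support of $\xi^{i,\epsilon}_s$, namely $P\cdot \hat N_i$: the integrand is nonzero precisely when $\hat n \in A(x,i)$. By Lemma~\ref{lemma31}(2), there exists $i_0 = i_0(X)$ such that $A(x,i) = \hat N_i$ for all $i\geq i_0$ and all $x\in X$, so the effective domain of integration collapses to the compact set $\hat N_i$. To evaluate the integrand on $\hat N_i$, I would pick $c$ large enough that $\tau$ fixes $\epsilon$ on $K^{(2)}_c$ (say $c\geq i_2(\epsilon,\tau)$ from the proof of Lemma~\ref{lemma32}) and then apply Lemma~\ref{lemma31}(1): for $i\geq i_1(X,c)$, one obtains a decomposition $\hat n x = n_0\bm(a_0)\hat n_0$ with $n_0\in N$, $\hat n_0\in\hat N_i$, and $a_0\in K^{(2)}_c$. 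Since $|\det a_0|_E = 1$ and $\tau(a_0)\epsilon = \epsilon$, the definition of $\xi^{i,\epsilon}_s$ immediately gives $\xi^{i,\epsilon}_s(\hat n x) = \epsilon$.

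Setting $I(X,\epsilon) = \max\{i_0(X),\, i_1(X,c),\, i_2(\epsilon,\tau)\}$, for $i\geq I$ the integrand is identically $\epsilon$ on the compact set $\hat N_i$, and integration yields $\tilde\xi^{i,\epsilon}_{1-s}(\dot w_1 x) = \vol(\hat N_i)\epsilon$ uniformly in $x\in X$, as claimed. The main bookkeeping obstacle is the very first step: one must check that the change-of-variable Jacobian together with the central factor $\dot w_1^2 \in Z(G)$ contribute no residual scalar. This is a normalisation check built into the definition of $M(s)$ and the choice of Haar measure on $N$; conceptually the rest is a transparent application of Lemma~\ref{lemma31}.
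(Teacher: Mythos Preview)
Your overall strategy is correct and coincides with the paper's: reduce the intertwining integral at $\dot w_1 x$ to an integral over $\hat N$, then invoke Lemma~\ref{lemma31} to localise the support to $\hat N_i$ and to force the $M$-part of the Iwasawa factorisation into $K_c^{(2)}$, so that the integrand is identically $\epsilon$. From that point on your argument and the paper's are identical, and your choice $I(X,\epsilon)=\max\{i_0(X),\,i_1(X,c)\}$ is exactly what the paper takes.

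The one genuine issue is your first step. You start from the displayed formula $M(s)\xi_s(g)=\int_N \xi_s(n^{-1}\dot w_1 g)\,dn$ and substitute $n=\dot w_1\hat n\,\dot w_1^{-1}$. Carrying this out literally gives the integrand $\xi^{i,\epsilon}_s(\dot w_1\hat n^{-1}\dot w_1\,x)$. Since $\dot w_1^2=-I_4$ (so indeed central), one has $\dot w_1\hat n^{-1}\dot w_1=(\dot w_1\hat n^{-1}\dot w_1^{-1})(-I_4)$ with $\dot w_1\hat n^{-1}\dot w_1^{-1}\in N$; hence $\xi^{i,\epsilon}_s(\dot w_1\hat n^{-1}\dot w_1 x)=\omega_\tau(-1)\,\xi^{i,\epsilon}_s(x)$, which is \emph{constant} in $\hat n$ and the resulting integral over $\hat N$ diverges. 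So this is not a harmless normalisation: the displayed formula for $M(s)$ in \S1.3 is a misprint, and the paper's own proof instead uses $M(s)\xi_s(g)=\int_N \xi_s(\dot w_1^{-1} n g)\,dn$. With that form, evaluating at $g=\dot w_1 x$ and writing $\hat n=\dot w_1^{-1} n\,\dot w_1\in\hat N$ immediately yields $\int_{\hat N}\xi^{i,\epsilon}_s(\hat n x)\,d\hat n$ with no extra scalar. Once you make this correction, your proof is complete and matches the paper's line for line; the ``bookkeeping obstacle'' you flagged simply disappears.
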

\begin{proof}
We have $$\tilde \xi^{i,\epsilon}_{1-s}(\dot w_1x)=\int_{N}\xi_s^{i,\epsilon}(\dot w_1^{-1}n \dot w_1x)dn.$$
Again, let $c$ be a positive integer such that $\epsilon$ is fixed by $\tau(K_c^{(2)})$. Let $I=\max\wpair{i_0(X),i_1(X,c)}$.
By definition of $\xi^{i,\epsilon}_s$ and Lemma \ref{lemma31}, for $i\ge I$, we have $\xi_s^{i,\epsilon}(\dot w_1^{-1}n\dot w_1x)\ne 0$ if and only if $\dot w_1^{-1}n\dot w_1x\in P\cdot \hat N_i$ if and only if $\dot w_1^{-1}n\dot w_1\in \hat N_i$. By part (1) of Lemma \ref{lemma31}, if $\dot w_1^{-1}n\dot w_1\in A(x,i)=\hat N_i$, we have $$\dot w_1^{-1}n\dot w_1x=n'\bm(a)\hat n, \textrm{ with } n'\in N, a\in K_{c}^{(2)}, \hat n\in \hat N_i.$$
Since for $a\in K_c^{(2)}$, we have $\tau(a)\epsilon=\epsilon$ and $|\det(a)|=1$, we have $$\xi_s^{i,\epsilon}(\dot w_1^{-1}n\dot w_1x)=\left\{\begin{array}{lll} |\det(a)|^{s+1/2} \tau(a)\epsilon=\epsilon, & \textrm{ if } \dot w_1^{-1}n \dot w_1\in \hat N_i \\ 0, & \textrm{ otherwise.}\end{array} \right.$$ 
Thus $$\tilde \xi_{1-s}^{i,\epsilon}(\dot w_1x)=\vol(\hat N_i) \epsilon.$$
This proves the assertion.
\end{proof}
Let $\tilde f^{i,\epsilon}_{1-s}(g)=f_{\tilde \xi_{1-s}^{i,\epsilon}}(g)=\lambda( \tilde \xi_{1-s}^{i,\epsilon}(g))$.  Then
 \begin{equation}{\label{eq35}}\tilde f^{i,\epsilon}_{1-s}(n\bm(a)\dot w_1x)=\vol(\hat N_i)|\det(a)|^{3/2-s}\tilde W_{\epsilon}^{(2)}(a), \quad i>I(X,\epsilon), x\in X.\end{equation} 
 Here $\tilde W^{(2)}$ denotes the Whittaker function for the representation $\tau^*=w_1\tau$ of $M\cong \GL_2(E)$, see $\S$1.

\section{Twisting by characters of $E^\times$}
We keep the notations of $\S$2. In particular, $E/F$ is unramified, $\psi$ is an unramified additive character of $F$, $\pi,\pi'$ are two $\psi_U$-generic irreducible smooth representations of $\Sp_4(F)$ with the same central character. We also fixed $v\in V_\pi, v'\in V_{\pi'}$ and a positive integer $C=C(v,v')$. We have defined Howe vectors $v_m, v_m'$. Let $\mu$ be a fixed character of $E^\times$ such that $\mu|_{F^\times}=\epsilon_{E/F}$.
\begin{prop}\label{prop41}
 If $\gamma(s,\pi\times \mu\eta, \psi)=\gamma(s,\pi'\times \mu\eta, \psi)$ for all quasi-characters $\eta$ of $F^\times$, then 
$$W_{v_m}(t\dot w_2)=W_{v_m'}(t\dot w_2),$$
for all $t\in T, m\ge 4^6C$. Recall that $w_2=s_\alpha s_\beta s_\alpha.$
\end{prop}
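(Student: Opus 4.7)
The strategy is to apply the local functional equation from Proposition~\ref{cor17} to carefully chosen test data so as to isolate the values $W_{v_m}(t\dot w_2)$. First I would construct a rank-one analogue of the sections of $\S 3$: for each quasi-character $\eta$ of $E^\times$ and each sufficiently large integer $i$, let $f_s^{i,\eta}\in \Ind_{B_1}^{G_1}(\eta_{s-1/2})$ be the section supported on $B_1\hat N_{1,i}$ (with $\hat N_{1,i}$ the opposite of $N_1$ intersected with the level-$i$ congruence subgroup of $G_1$) and normalised by $f_s^{i,\eta}(b\hat n)=\eta_s(b)$. A rank-one analogue of Proposition~\ref{prop33} then shows that $\widetilde f_{1-s}^{\,i,\eta}:=M(s)f_s^{i,\eta}$ is supported on $B_1\dot w^1 N_1$ and equals $\vol(\hat N_{1,i})\,\eta^*_{1/2-s}(b)$ on any fixed set $b\dot w^1 X$ with $X\subset N_1$ open compact. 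Simultaneously I would take $\phi_1$ to be the characteristic function of $\CP_E^{-m}\subset E$, so that the Weil-representation factor $(\omega_{\mu,\psi^{-1}}(g)\phi_1)(x)$ acts as a cut-off adapted to the Howe level $m$.

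The main step is to evaluate the two zeta integrals. For $\Psi(W_{v_m},\phi_1,f_s^{i,\eta})$, the support of $f_s^{i,\eta}$ together with the Iwasawa decomposition $G_1=N_1M_1\hat N_1$ forces $g\in M_1\hat N_{1,i}$ modulo $N_1$; absorbing the $\hat N_{1,i}$-factor into $H_m$ via Iwahori factorisation (which is legitimate once $i$ is large compared to $m$), the integrand involves $W_{v_m}$ evaluated on the Borel $B\subset G$, so Corollary~\ref{cor24} forces $\Psi(W_{v_m},\phi_1,f_s^{i,\eta})=\Psi(W_{v_m'},\phi_1,f_s^{i,\eta})$. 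For $\Psi(W_{v_m},\phi_1,\widetilde f_{1-s}^{\,i,\eta})$ the support of $\widetilde f_{1-s}^{\,i,\eta}$ forces $g=\bm_1(a)\dot w^1 n_1$; the crucial identity $j(\dot w^1)=\dot s_\alpha\dot s_\beta\dot s_\alpha=\dot w_2$, which uses that $w^1$ embeds as $\dot s_\beta$, transports $j(\bx_\alpha(x)\bm_1(a)\dot w^1 n_1)$ into an element of the form $\bt(a)\dot w_2\,u(x,n_1)$ with $u(x,n_1)\in U_{w_2}^-$. By Corollary~\ref{cor26}(2), whose hypothesis $m\ge 4^{a_{l(w_2)}}C=4^6 C$ is precisely the one in the statement being proved, whenever $u(x,n_1)\notin U_{w_2,m}^-$ the two Whittaker values already coincide. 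On the residual region $u(x,n_1)\in U_{w_2,m}^-\subset H_m$, Lemma~\ref{lemma21}(2) gives $W_{v_m}(t\dot w_2 u)=\psi_m(u)\,W_{v_m}(t\dot w_2)$ and similarly for $\pi'$.

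After cancelling the common non-intertwined factor, the hypothesis $\gamma(s,\pi\times\mu\eta,\psi)=\gamma(s,\pi'\times\mu\eta,\psi)$ reduces to an identity of the form
\[
\int_T \bigl(W_{v_m}(t\dot w_2)-W_{v_m'}(t\dot w_2)\bigr)\,F_\eta(t)\,d^\times t=0,
\]
where $F_\eta(t)$ is an explicit $\eta$- and $\psi$-dependent kernel assembled from the Weil-representation factor, the section $f_s^{i,\eta}$, and the inner integral $\int_{U_{w_2,m}^-}\psi_m(u)(\cdots)\,du$. By Corollary~\ref{cor23} the integrand is supported on $t=\bt(a_1,a_2)$ with $a_2\in E^1(1+\CP_E^m)$ and $a_1/a_2\in 1+\CP_E^m$, a subset of $T$ that is compact modulo the centre $Z$; varying $\eta$ over all quasi-characters of $E^\times$ and applying Mellin inversion on the resulting compact quotient then yields $W_{v_m}(t\dot w_2)=W_{v_m'}(t\dot w_2)$ for every $t\in T$. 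The principal obstacle is the bookkeeping in the intertwined integral: one must unwind the $j$-twist, the embedding $G_1\hookrightarrow G$ and the Iwasawa decomposition precisely enough to display the integral as a Mellin transform in $t$ with a kernel $F_\eta$ detecting all characters $\eta$, and then confirm that Corollary~\ref{cor26}(2) cuts the complementary region away cleanly.
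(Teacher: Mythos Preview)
Your overall strategy---choosing a rank-one section supported on $B_1\hat N_{1,i}$, pairing it with a Schwartz function adapted to the Howe level, using Corollary~\ref{cor26}(2) to kill the contribution from $U_{w_2}^- \setminus U_{w_2,m}^-$, and then applying the functional equation---is exactly the method the paper uses (deferring the detailed computation to \cite{Zh2}).  What goes wrong is the last paragraph.

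The zeta integral $\Psi(W,\phi_1,\widetilde f_{1-s})$ is taken over $N_1\backslash G_1$, and on the big cell this is parametrised by $M_1\times N_1\cong E^\times\times F$.  After you absorb the $N_1$-part and the inner $x$-integral into the $U_{w_2,m}^-$-average, the only torus variable left is the single parameter $a\in E^\times$ coming from $M_1$; there is no integration over the full two-parameter torus $T$.  So what you actually obtain is
\[
\int_{E^\times}\bigl(W_{v_m}(\bt(a)\dot w_2)-W_{v_m'}(\bt(a)\dot w_2)\bigr)\,\mu(a)\,\eta^{-1}(a)\,|a|^{-s-1/2}\,da=0,
\]
and Mellin inversion in $\eta$ gives the desired equality only on the one-parameter family $t=\bt(a)$, not for all $t\in T$.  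Your appeal to Corollary~\ref{cor23} to force compact support is misplaced: that corollary constrains $W_{v_m}(t)$, not $W_{v_m}(t\dot w_2)$, and there is no reason for the latter to have the support you claim.  Moreover, even granting compact support, characters of the rank-one group $E^\times$ cannot separate points of the rank-two torus $T/Z$, so the inversion argument as written cannot reach every $t$.

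The missing step, supplied in the paper, is a separate reduction from $\bt(a_1,a_2)$ to $\bt(a)$.  Since $w_2(\beta)=\beta$, the element $\dot w_2$ commutes with $U_\beta$, and the usual Howe-vector trick with $\bx_\beta(r)\in U_m$ forces $\beta(t)=a_2\bar a_2\in 1+\CP_F^m$ whenever $W_{v_m}(t\dot w_2)\neq 0$.  Hence $a_2=e b_1$ with $e\in E^1$ and $b_1\in 1+\CP_E^m$; writing $t=e\,\bt(a_1')\,\bt(1,b_1)$ and using $\bt(1,b_1)\in H_m$ together with $\omega_\pi=\omega_{\pi'}$ reduces the general case to $t=\bt(a_1')$, which you already have.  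Adding this paragraph, and replacing the $\int_T$ display by an $\int_{E^\times}$ display, repairs the proof.
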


\begin{proof}
An almost identical argument as the proof of Theorem 4.4 of \cite{Zh2} will give us the following
\begin{align}
&\gamma(s,\pi\times\mu \eta, \psi)-\gamma(s,\pi'\times\mu \eta,\psi) \label{eq41}\\
=&\epsilon_{\psi^{-1}}q_F^{5k}q_E^{4k}\int_{E^\times}(W_{v_k}(\bt(a)\dot w_2)-W_{v_k'}(\bt(a)\dot w_2))\mu(a)\eta_{-s-1/2}^{-1}(a)da, \nonumber
\end{align}
for $k=4^6C$. Recall that $\epsilon_{\psi^{-1}}$ is the Weil index appeared in the Weil representation formula Eq.(\ref{eq11}),  $\bt(a)=\diag(a,1,1,a^{-1})$, $\eta_{-s-1}(a)=\eta(a)|a|^{-s-1}$. In fact, in the $\Sp_{2n}$ case, Eq.(\ref{eq41}) (with a little bit modification on the constant term $q_F^{5k}q_E^{4k}$ and the exponent $-s-1/2$) is Eq.(4.4) of \cite{Zh2} with $k=4^{l(w_2)^2}C=4^9C$, where we used the sequence $a_t=t^2$ in Proposition \ref{prop25}. It is easy to see that this is true for $k=4^6C$ by choosing the sequence $a_0=0,a_1=1, a_2=3, a_3=6$ and $a_4=10$. 

By the assumption on $\gamma$-factors and Eq(\ref{eq41}), we have
$$ \int_{E^\times}(W_{v_k}(\bt(a)\dot w_2)-W_{v_k'}(\bt(a)\dot w_2))\mu(a)\eta_{-s-1/2}^{-1}(a)da\equiv 0,$$
for any quasi-character $\eta$ of $F^\times$. By the inverse Mellin transformation, we get
$$W_{v_k}(\bt(a)\dot w_2)=W_{v_k'}(\bt(a)\dot w_2), \forall a\in E^\times.$$
From this, we can prove $W_{v_k}(t\dot w_2)=W_{v'_k}(t\dot w_2)$ for all $t=\bt(a,b)$ as follows. Take $r\in \CP_F^{-k}$, so that $\bx_\beta(r)\in U_k$. From Eq.(\ref{eq21}) and the relation
$$t \dot w_2 \bx_\beta(r)= t\bx_\beta(r) \dot w_2= \bx_\beta(b^2 r) t \dot w_2,$$
we get $\psi(r)W_{v_k}(t \dot w_2)=\psi(b^2r)W_{v_k}(t\dot w_2)$. Thus, if $W_{v_k}(t\dot w_2)\ne 0$, we have $\psi(r)=\psi(b^2r)$ for all $r\in \CP_F^{-k}$, which implies $b^2\in 1+\CP_F^k$. Thus we get $b=e b_1$ with $b_1\in 1+\CP_E^k$ and $e\in E^1$ (see the proof of Corollary \ref{cor23}). Thus we can write $t=e \bt(a_1)t_1$, with $a_1=ea$ and $t_1=\bt(1, b_1)$. Since
$$t\dot w_2 = e \bt(a_1) t_1 \dot w_2 =e \bt(a_1)\dot w_2 t_1,$$
and $t_1\in H_k$, we get 
$$W_{v_k}(t\dot w_2)=\omega_\pi(e) W_{v_k}(\bt(a_1)\dot w_2),$$
by Eq.(\ref{eq21}). Here $\omega_\pi$ is the central character of $\pi$. Since $ \pi$ and $\pi'$ have the same central character, we get $W_{v_k}(t\dot w_2)=W_{v'_k}(t \dot w_2)$ for all $t\in T$. By Lemma \ref{lemma21} (3) and Proposition \ref{prop25}, it is easy to check that
$$W_{v_m}(t\dot w_2)=W_{v'_m}(t\dot w_2), \forall m\ge k=4^6C, t\in T.$$
This concludes the proof.
\end{proof}
\begin{prop}\label{prop42}
 Suppose that $\gamma(s,\pi\times \mu\eta, \psi)=\gamma(s,\pi'\times \mu\eta, \psi)$ for all quasi-characters $\eta$ of $F^\times$. Then for $m\ge 4^9C$, $a\in \GL_2(E)$ and $n\in N-N_m$, we have 
$$W_{v_m}(\bm(a)\dot w_1 n)=W_{v_m'}(\bm(a)\dot w_1 n).$$
\end{prop}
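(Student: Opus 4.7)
The plan is to exploit the same gamma-factor functional equation as in Proposition~\ref{prop41}, but now to extract the contribution from the open Bruhat cell $P\dot w_1 N$ rather than the $\bt(a)\dot w_2$-stratum. The crucial input is Proposition~\ref{prop41} itself, which kills the ``lower-cell'' contributions that dominated the earlier argument.

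Starting from the local functional equation of Proposition~\ref{cor17} applied to both $\pi$ and $\pi'$ with Howe vectors $v_m, v_m'$, a Schwartz function $\phi_1\in\CS(E)$, and a section $f_s\in I(s,\mu\eta)$, the hypothesis on the gamma factors yields
$$\Psi(W_{v_m}-W_{v_m'},\phi_1,M(s)f_s) \;=\; \gamma(s,\pi\times\mu\eta,\psi_U)\,\Psi(W_{v_m}-W_{v_m'},\phi_1,f_s).$$
I would unfold both sides by writing $M(s)f_s(g) = \int_{N_1}f_s(w^1 n g)\,dn$ and decomposing the integration region of $\Psi$ via the Bruhat decomposition of $G$. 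Using the Howe-vector support restrictions of Lemma~\ref{lemma22} and Corollary~\ref{cor23}, every Bruhat cell $BwB$ with $w\in\{1,s_\alpha,s_\beta,s_\alpha s_\beta,s_\beta s_\alpha\}$ contributes zero to the difference by Corollary~\ref{cor26}(1); and the cell $Bw_2B$ contributes zero by Proposition~\ref{prop41} combined with Proposition~\ref{prop25}(2) applied with the sequence $a_t = t^2$, which is precisely what forces the Howe level $m\ge 4^{a_3}C = 4^9C$.

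What survives is an integral over the open Bruhat cell $P\dot w_1 N$, which after a change of variables takes the shape
$$\int_{\GL_2(E)}\int_{N - N_m}\bigl(W_{v_m}-W_{v_m'}\bigr)(\bm(a)\dot w_1 n)\,K_{\phi_1,f_s,s}(a,n)\,da\,dn = 0,$$
for an explicit kernel $K_{\phi_1,f_s,s}$ determined by the test data. Varying $\phi_1$, $f_s$, and $\eta$ (and hence $s$), Mellin inversion in the $\GL_2(E)$-variable $a$ together with a Fourier/density argument in the $N$-variable $n$ delivers the pointwise equality $W_{v_m}(\bm(a)\dot w_1 n) = W_{v_m'}(\bm(a)\dot w_1 n)$ for all $a\in\GL_2(E)$ and $n\in N - N_m$.

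The main obstacle I expect is the Bruhat-cell bookkeeping in the unfolding: identifying exactly which terms of $\Psi(\,\cdot\,,\phi_1,M(s)f_s)$ reach the open cell $P\dot w_1 N$ after the Howe-vector cut-offs, handling the $U_\alpha$-direction needed to promote the equality from $T$ to all of $M\cong\GL_2(E)$ (using the Bruhat decomposition of $\GL_2(E)$ together with the Whittaker transformation law on the left), and verifying that the resulting kernel $K_{\phi_1,f_s,s}$ is sufficiently flexible as the test data varies to justify the Mellin inversion. The calibration $m\ge 4^9C$ is exactly what is required for Proposition~\ref{prop25} to annihilate every Bruhat cell strictly below $w_1$, in particular the competing length-three cell $Bw_2B$.
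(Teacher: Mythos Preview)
Your approach has a genuine gap. The $\GL_1$ zeta integral $\Psi(W,\phi_1,f_s)$ is taken over $N_1\backslash G_1\times E$, where $G_1=\RU(1,1)$ is only a small subgroup of $G$; it does \emph{not} unfold over $U\backslash G$, so no ``integral over the open cell $P\dot w_1 N$'' appears in the way you describe. Even granting a suitable unfolding, the available test data $(\phi_1,f_s,\eta)$ carries essentially one $E^\times$-worth of Mellin freedom plus a single $\CS(E)$-variable, which cannot possibly separate points in the much larger set $\GL_2(E)\times (N-N_m)$. In particular, your proposed ``Mellin inversion in the $\GL_2(E)$-variable $a$'' is not available from a $\GL_1$ twist; that is precisely what the later $\GL_2$ twist (Proposition~\ref{prop55}) provides.

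The paper's proof takes a completely different route that does not re-use the functional equation at all. The gamma-factor hypothesis is used only through Proposition~\ref{prop41}, which together with Corollary~\ref{cor26} and Proposition~\ref{prop25}(2) gives $W_{v_k}(g)=W_{v_k'}(g)$ for all $g\in BwB$ with $w\in\bW\setminus\{w_0,w_1\}$ and $k\ge 4^6C$. The actual argument is then a purely algebraic Bruhat-cell descent: writing $n=\bx_{2\alpha+\beta}(r_3)\bx_{\alpha+\beta}(r_2)\bx_\beta(r_1)\in N-N_m$ and picking the first index $j$ with $\bx_{\gamma_j}(r_j)\notin N_m$, one passes from level $m$ to a level $k$ with $3k\le m<4k$ via Lemma~\ref{lemma21}(3), observes that $\bx_{-\gamma_j}(-1/\tilde r_j)\in H_k$, and uses the Chevalley relation $\bx_{\gamma_j}(\tilde r_j)\bx_{-\gamma_j}(-1/\tilde r_j)\in s_{\gamma_j}B$ to push $\bm(a)\dot w_1\,\bx_{\gamma_3}(\tilde r_3)\cdots\bx_{\gamma_j}(\tilde r_j)$ into some $Bw'B$ with $w'\notin\{w_0,w_1\}$. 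This is checked case by case for $j=1,2,3$, and iterated at most three times (hence the bound $m\ge 4^9C$). The equality for arbitrary $a\in\GL_2(E)$, not just $a\in T$, comes for free because $M\subset B\cup Bs_\alpha B$ and both of these cells are already handled.
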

Recall that $N_m=N\cap H_m$.
\begin{proof}
The proof is similar to the proof of Theorem 3.11 of \cite{Zh2} and the method is due to Baruch, \cite{Ba1}. We give the details here. 

By Corollary \ref{cor26}, we have $W_{v_k}(g)=W_{v_k'}(g)$ for all $g\in BwB$ with $w\in \bW$ with $\textrm{length}(w)\le 2$ and $k\ge 4^{l(w)^2}C$. From the condition and Proposition \ref{prop41}, we get $W_{v_k}(t\dot w_2)=W_{v_k'}(t\dot w_2)$ for $t\in T, k\ge 4^6C$. Thus we have $W_{v_k}(g)=W_{v_k'}(g)$ for $g\in Bw_2B$ and $k\ge 4^6C$.

For $n\in N$, we can write $n=\bx_{2\alpha+\beta}(r_3)\bx_{\alpha+\beta}(r_2)\bx_{\beta}(r_1)$ for $r_1, r_3\in F, r_2\in E$. Denote $\gamma_3=2\alpha+\beta, \gamma_2=\alpha+\beta, \gamma_1=\beta$. Let $j$ ($1\le j\le 3$) be the first index such that $\bx_{\gamma_j}(r_j)\notin N_m$. Then it suffices to show that
$$W_{v_m}(\bm(a)\dot w_1 \bx_{\gamma_3}(r_3)\dots \bx_{\gamma_j}(r_j))= W_{v_m'}(\bm(a)\dot w_1 \bx_{\gamma_3}(r_3)\dots \bx_{\gamma_j}(r_j)).$$
Take an integer $k$ such that $3k\le m <4k$. By Lemma \ref{lemma21} (3), we have
$$W_{v_m}(\bm(a)\dot w_1 \bx_{\gamma_3}(r_3)\dots \bx_{\gamma_j}(r_j))=\frac{1}{\vol(U_m)}\int_{U_m} W_{v_k}(\bm(a)\dot w_1 \bx_{\gamma_3}(r_3)\dots \bx_{\gamma_j}(r_j)u)\psi_U^{-1}(u)du. $$
There is a similar formula for $W_{v_m'}$, and thus it suffices to show that
\begin{equation}\label{eq42}W_{v_k}(\bm(a)\dot w_1 \bx_{\gamma_3}(r_3)\dots \bx_{\gamma_j}(r_j)u)=W_{v_k'}(\bm(a)\dot w_1 \bx_{\gamma_3}(r_3)\dots \bx_{\gamma_j}(r_j)u), \forall u\in U_m. \end{equation}
Write $u=\bx_{\alpha}(s_4) \bx_{\gamma_3}(s_3)\bx_{\gamma_2}(s_2)\bx_{\gamma_1}(s_1)$. From the Chevalley relation, 
$$[\bx_{\alpha}, \bx_{\gamma_i}]\in \prod_{s\ge 1}U_{s\alpha+\gamma_i},$$
see \cite{St}, Chapter 3, relation (R2) above Lemma 21, we get
$$  \bx_{\gamma_3}(r_3)\dots \bx_{\gamma_j}(r_j) \bx_{\alpha}(s_4)=\bx_\alpha(s_4)\bx_{\gamma_3}(r_3')\dots \bx_{\gamma_j}(r_j'),$$
with $r_j'=r_j$. Since $N$ is abelian, we get
$$ \bx_{\gamma_3}(r_3)\dots \bx_{\gamma_j}(r_j)u =\bx_{\alpha}(s_4)\bx_{\gamma_3}(\tilde r_3)\dots \bx_{\gamma_1}(\tilde r_1),$$
with $\tilde r_k=r_k'+s_k$ for $k\ge j$. In particular, we have $|\tilde r_j|=|r_j|$ since $\bx_{\gamma_j}(r_j)\notin N_m$ and $\bx_{\gamma_j}(s_j)\in N_m$. Since $ w_1(\alpha)=\alpha$, we can write
$$\bm(a)\dot w_1 \bx_{\gamma_3}(r_3)\dots \bx_{\gamma_j}(r_j)u=\bm(a_1)\dot w_1 \bx_{\gamma_3}(\tilde r_3)\dots \bx_{\gamma_1}(\tilde r_1),$$
for some $a_1\in \GL_2(E)$.

We first show Eq.(\ref{eq42}) under the assumption $\bx_{\gamma_{j-1}}(\tilde r_{j-1})\dots \bx_{\gamma_1}(r_1)\in N_k$. Under this assumption, it suffices to show that 
$$W_{v_k}(\bm(a_1)\dot w_1 \bx_{\gamma_3}(\tilde r_3)\dots \bx_{\gamma_j}(\tilde r_j))=W_{v_k}(\bm(a_1)\dot w_1 \bx_{\gamma_3}(\tilde r_3)\dots \bx_{\gamma_j}(\tilde r_j)),$$
by Eq.(\ref{eq21}). Since $\bx_{\gamma_j}(r_j)\notin N_m$, we have $r_j\notin \CP^{-(2\textrm{ht}(\gamma_j)-1)m}$, see the structure of $H_m$ given in $\S$2, and thus $-1/r_j \in \CP^{(2\textrm{ht}(\gamma_j)-1)m}\subset \CP^{2(\textrm{ht}(\gamma_j)+1)k}$ since $m\ge 3k$. We then get $\bx_{-\gamma_j}(-1/r_j)\in J_k$. Since $|\tilde r_j|=|r_j|$, we have $\bx_{-\gamma_j}(-1/\tilde r_j)\in H_k$ too. By Eq.(\ref{eq21}), we have
$$W_{v_k}(\bm(a_1)\dot w_1 \bx_{\gamma_3}(\tilde r_3)\dots \bx_{\gamma_j}(\tilde r_j))=W_{v_k}(\bm(a_1)\dot w_1 \bx_{\gamma_3}(\tilde r_3)\dots \bx_{\gamma_j}(\tilde r_j)\bx_{-\gamma_j}(-1/\tilde r_j)).$$
There is a similar relation for $W_{v_k'}$. Thus it suffices to show 

Claim $(*)$: $ \bm(a_1)\dot w_1 \bx_{\gamma_3}(\tilde r_3)\dots \bx_{\gamma_j}(\tilde r_j)\bx_{-\gamma_j}(-1/\tilde r_j)\in BwB$ for some $w\in \bW-\wpair{w_0, w_1}$. 

We will check Claim $(*)$ case by case. We need to use the following Chevalley relation
$$ \bx_{\gamma}(r)\bx_{-\gamma}(-1/r)\in  s_\gamma B, $$
where $s_\gamma$ is the reflection defined by the root $\gamma$ (not necessarily simple), see (R3) Chapter 3 of \cite{St}.

If $j=1$, we have $\gamma_1=\beta$ and thus $\bx_{\gamma_1}(\tilde r_1)\bx_{-\gamma_1}(-1/\tilde r_1)=s_\beta b$ for some $b\in B$. Thus $$ \bm(a_1)\dot w_1 \bx_{\gamma_3}(\tilde r_3)\bx_{\gamma_2}(\tilde r_2)\bx_{\gamma_1}(\tilde r_1)\bx_{-\gamma_1}(-1/\tilde r_1)\in Mw_1U_{2\alpha+\beta}U_{\alpha+\beta}s_\beta B.$$
Since $s_\beta(2\alpha+\beta)>0, s_\beta(\alpha+\beta)>0$, we get $Mw_1U_{2\alpha+\beta}U_{\alpha+\beta}s_\beta B\subset M w_1 s_\beta B=Ms_\beta s_\alpha B $. Since $M\subset B\cup Bs_\alpha B$, we get $Ms_\beta s_\alpha B\subset Bs_\beta s_\alpha B \cup Bs_\alpha B s_\beta s_\alpha B$. Since $Bs_\alpha B s_\beta s_\alpha B=Bs_\alpha s_\beta s_\alpha B$ by Lemma 25 of \cite{St}, the assertion follows.

Next, we consider the case $j=2$. In this case $s_{\gamma_2}=s_{\alpha+\beta}=w_1$, and thus $$ \bm(a)\dot w_1 \bx_{\gamma_3}(\tilde r_3)\bx_{\gamma_2}(\tilde r_2)\bx_{-\gamma_2}(-1/\tilde r_2)\in M w_1 U_{2\alpha+\beta} w_1 B.$$
Since $w_1U_{2\alpha+\beta}w_1=U_{w_1(2\alpha+\beta)}=U_{-\beta} \subset Bs_\beta B$, we get $M w_1 U_{2\alpha+\beta} w_1 B\subset M Bs_\beta B \subset Bs_\beta B \cup Bs_\alpha Bs_\beta B=Bs_\beta B\cup Bs_\alpha s_\beta B $. The assertion follows.

Finally, we check Claim $(*)$ when $j=3$. In this case $s_{\gamma_3}=s_{2\alpha+\beta}=s_\alpha s_\beta s_\alpha=w_2$. Thus
$$\bm(a_1)\dot w_1 \bx_{\gamma_3}(\tilde r_3)\bx_{-\gamma_3}(-1/\tilde r_3)\in M w_1 w_2 B.$$
It is easy to check that $w_1w_2=s_\alpha s_\beta$ and $Ms_\alpha s_\beta B\subset Bs_\beta B\cup Bs_\alpha s_\beta B$. Thus the assertion follows.

The proof of Claim $(*)$ and hence the Proposition is finished if $\bx_{\gamma_{j-1}}(\tilde r_{j-1})\dots \bx_{\gamma_1}(r_1)\in N_k$. Otherwise, there will be an integer $i$ with $1\le i\le j-1$ such that $ \bx_{\gamma_{i-1}}(\tilde r_{i-1})\dots \bx_{\gamma_1}(r_1)\in N_k$ but $\bx_{\gamma_i}(\tilde r_i)\notin N_k$. We just repeat the above process by taking an integer $k_1$ such that $3k_1\le k<4k_1$ and then reduce everything to $W_{v_{k_1}}$. This process stops after at most 3 steps, and in each step we get a integer $k_t$ which satisfies $k_t\ge \frac{1}{4}k_{t-1}\ge\frac{1}{4^3}m\ge 4^6C$. Each step goes through since $W_{v_k}(t\dot w)=W_{v'_k}(t\dot w)$ for all $t\in T$, $w\in \bW-\wpair{w_0,w_1}$ and $k\ge 4^6C$. The proof is complete.
\end{proof}

\section{Twisting by representations of $\GL_2(E)$}

We will fix our notation as in $\S$2. In particular, $E/F$ is an unramified extension of $p$-adic fields.

\subsection{Howe vectors for Weil representations}

 Let $\mu$ be a character of $E^\times$ such that $\mu|_{F^\times}=\epsilon_{E/F}$ and hence we have the Weil representation $\omega_{\mu,\psi^{-1}}$ of $\RU(2,2)(F)$ on $\CS(E^2)$. Recall that $\psi$ is an unramified character of $F$. Given an integer $m$, we consider the function $\Phi^m\in \CS(E^2)$ defined by 
 $$\Phi^m(x,y)=\Char_{\CP^{3m}}(x)\cdot \Char_{1+\CP^{m}}(y),$$
 where for a subset $A\subset E$, $\Char_A$ denotes the characteristic function of $A$.

\begin{prop}{\label{prop51}}
\begin{enumerate}
\item For $n\in N_m$, we have
$$\omega_{\mu,\psi^{-1}}(n)\Phi^{m}=\psi_U^{-1}(n)\Phi^{m}.$$
\item For $\hat n \in \hat N_{m}$, we have $$\omega_{\mu,\psi^{-1}}(\hat n)\Phi^{m}=\Phi^{m}.$$
\end{enumerate}
\end{prop}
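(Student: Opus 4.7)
The proof has two parts, both of which follow by direct calculation from the Schr\"odinger-model formulas for $\omega_{\mu,\psi^{-1}}$ listed just before the Proposition. The two ingredients are the formula $\omega_{\mu,\psi^{-1}}(\bn(b))\Phi(x) = \psi(-x b \,{}^t \bar x)\Phi(x)$ for the action of $N$, the Fourier-transform formula for $\omega_{\mu,\psi^{-1}}(w_0)$, and the matrix identity $\hat\bn(b) = w_0 \bn(-b) w_0^{-1}$, which is an immediate $4\times 4$ block computation.

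For part (1), the above formula reduces the claim to a pointwise identity on $\Phi^m$. Writing $b = \begin{pmatrix} z_1 & y_0 \\ \bar y_0 & z_2 \end{pmatrix}$ with $z_i \in F$ and $y_0 \in E$, I would expand
\begin{equation*}
x b \,{}^t \bar x = z_1 |x_1|^2 + z_2 |x_2|^2 + \tr_{E/F}(x_2 \bar y_0 \bar x_1)
\end{equation*}
for $x = (x_1, x_2) \in \supp \Phi^m$. The defining congruences of $N_m = N \cap H_m$ force $z_1 \in \CP_F^{-5m}$, $z_2 \in \CP_F^{-m}$, $y_0 \in \CP_E^{-3m}$. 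Paired with $x_1 \in \CP_E^{3m}$ and $x_2 \in 1+\CP_E^m$ from the support of $\Phi^m$, the bookkeeping is immediate: the summands $z_1 |x_1|^2$ and $\tr_{E/F}(x_2 \bar y_0 \bar x_1)$ both lie in $\CO_F$ (so are killed by the unramified $\psi$), while $z_2|x_2|^2 \equiv z_2 \pmod{\CO_F}$. The resulting character value is $\psi(-z_2)=\psi_U^{-1}(\bn(b))$, proving (1).

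For part (2), I would use the conjugation identity to rewrite $\omega(\hat\bn(b))\Phi^m = \omega(w_0)\,\omega(\bn(-b))\,\omega(w_0)^{-1}\Phi^m$, and then compute $\omega(w_0)^{-1}\Phi^m$ as an explicit inverse partial Fourier transform. Because $\Phi^m$ splits as a product in the two coordinates and $E/F$ is unramified (so the pairing $(x,y)\mapsto \psi(\tr_{E/F}(x\bar y))$ is self-dual on $\CO_E$ with $\vol(\CP_E^k)=q_E^{-k}$), one obtains
\begin{equation*}
\omega(w_0)^{-1}\Phi^m(y_1, y_2) = c \cdot \Char_{\CP_E^{-3m}}(y_1) \cdot \psi(-\tr_{E/F}(y_2))\,\Char_{\CP_E^{-m}}(y_2)
\end{equation*}
for a nonzero scalar $c$. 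The decisive point is that the level conditions $z_1 \in \CP_F^{7m}$, $z_2 \in \CP_F^{3m}$, $y_0 \in \CP_E^{5m}$ defining $\hat N_m$ are exactly matched to these dual supports: each of $z_1|y_1|^2$, $z_2|y_2|^2$, and $\tr_{E/F}(y_2 \bar y_0 \bar y_1)$ lies in $\CP_F^m \subset \CO_F$, so the quadratic phase $\psi(y b \,{}^t \bar y)$ introduced by $\omega(\bn(-b))$ is identically $1$ on $\supp \omega(w_0)^{-1}\Phi^m$. Hence $\omega(\bn(-b))\omega(w_0)^{-1}\Phi^m = \omega(w_0)^{-1}\Phi^m$, and applying $\omega(w_0)$ returns $\Phi^m$.

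The only mildly delicate aspect is keeping the Fourier-analytic constants and the sign/conjugation conventions straight in the computation of $\omega(w_0)^{-1}\Phi^m$; once those are fixed, both assertions are essentially the arithmetic observation that the exponents $\{3m,m\}$ chosen in the definition of $\Phi^m$ and the exponents $\{-5m,-3m,-m\}$ (respectively $\{7m,5m,3m\}$) in the matrix template for $H_m$ are precisely tuned so that the relevant Hermitian forms take values in $\CP_F^m$.
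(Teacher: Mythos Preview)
Your argument is correct and follows essentially the same route as the paper: for (1) you expand the Hermitian form $xb\,{}^t\bar x$ on $\supp\Phi^m$ and read off the congruence $\equiv z_2\pmod{\CO_F}$, exactly as the paper does; for (2) you conjugate through $w_0$, compute the (inverse) Fourier transform of $\Phi^m$ as a product of a box function and a phase-times-box, and verify the quadratic phase is trivial on the dual support. The only cosmetic difference is that the paper writes $\hat n=\dot w_0^{-1}\bn(b)\dot w_0$ and computes $\omega(w_0)\Phi^m$, whereas you use the equivalent identity $\hat\bn(b)=w_0\bn(-b)w_0^{-1}$ and compute $\omega(w_0)^{-1}\Phi^m$; the paper also leaves the level-checking in (2) to ``a similar argument as above'', while you spell out that each term lands in $\CP_F^m$.
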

\begin{proof}

(1) For $n=\bn(b)\in N_m$, we have
\begin{align*}
\quad\omega_{\psi^{-1},\mu}(\bn(b))\Phi^{m}(x)
=\psi^{-1}(xb{}^t \bar x)\Phi^{m}(x).
\end{align*}
Write $x=(x_1,x_2)$ and $b=(b_{ij})$, then $$xb{}^t\! \bar x=b_{11}x_1\bar x_1+b_{21}x_1\bar x_2+b_{12}x_2\bar x_1 +b_{22}x_2\bar x_2.$$
For $(x_1,x_2)\in \supp(\Phi^{m})$, we get $x_1\in \CP_E^{3m}$ and $x_2\in 1+\CP_E^m$. From $n\in N_m$, we get $b_{11}\in \CP_F^{-5m}, b_{12}=\bar b_{21}\in \CP_E^{-3m}$ and $b_{22}\in \CP_F^{-m}$. Thus 
$$ xb{}^t\! \bar x\equiv b_{22}\mod \CO_F.$$
Thus $$\omega_{\psi^{-1},\mu}(\bn(b))\Phi^{m}=\psi^{-1}(b_{22})\Phi^{m}=\psi^{-1}_U(n)\Phi^{m,\chi}.$$

(2) For $\hat n\in \hat N_m$, we can write $\hat n= \dot w_0^{-1}\bn(b)\dot w_0$ with $$b\in \begin{pmatrix}\CP_F^{7m}& \CP_E^{5m}\\ \CP_E^{5m} & \CP_E^{3m} \end{pmatrix}\cap \Herm_2(F).$$
Let $\Phi'=\omega(w_0)\Phi^{m}$ temporarily. We have
\begin{align*}
\Phi'(x_1,x_2)&=\omega(w_0)\Phi^{m}(x_1,x_2)\\
&=\gamma_{\psi^{-1}}\int_{E^2}\Phi^m(y)\psi^{-1}(-\tr_{E/F}( x {}^t\! \bar y))dy\\
&:=\gamma_{\psi^{-1}} f_1(x_1)f_2(x_2),
\end{align*}
where
 \begin{align*}
f_1(x_1)= \int_{\CP_E^{3m}}\psi(\tr_{E/F} (\bar y_1 x_1))dy_1 =q_E^{-3m}\Char_{\CP_E^{-3m}}(x_1),
\end{align*}
 and 
\begin{align*}
f_2(x_2)=\int_{1+\CP_E^m}\psi(\tr( \bar u x_2))du =q_E^{-m}\psi(\tr(x_2))\Char_{\CP^{-m}_E}( x_2).
\end{align*}
A similar argument as above shows that $\Phi'$ is fixed by $\bn(b)$, and thus
$$\omega(\hat n)\Phi^{m}=\omega(w_0^{-1})\omega(n)\omega(w_0)\Phi^{m}=\omega(w_0^{-1})\Phi'=\omega(w_0^{-1}w_0)\Phi^{m}=\Phi^{m}.$$
This completes the proof.
\end{proof}

\begin{lem}{\label{lemma52}} Let $m$ be a positive integer.
 For $a=\begin{pmatrix}a_{11}& a_{12}\\ a_{21} &a_{22} \end{pmatrix}\in \GL_2(E)$ with $|a_{22}|\le q_E^m$ and $|a_{21}|\le q_E^{3m}$, we have
\begin{align*}&\omega_{\mu,\psi^{-1}}(\dot w_0)\Phi^{m}(e_2a) =\gamma_{\psi^{-1}} q_E^{-4m}\psi(\tr_{E/F}(a_{22})).\end{align*}
\end{lem}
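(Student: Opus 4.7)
The plan is to apply directly the Weil representation formula for $\dot w_0$ and then exploit the fact that the Schwartz function $\Phi^m$ has a separated form, so that the resulting $2$-variable integral reduces to a product of two $1$-variable integrals on $E$ of the kind already computed in the proof of Proposition \ref{prop51}.

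First I would write $e_2 a = (a_{21}, a_{22})$ and apply the formula
$$\omega_{\mu,\psi^{-1}}(\dot w_0)\Phi^m(x_1,x_2) = \gamma_{\psi^{-1}}\int_{E^2}\Phi^m(y_1,y_2)\,\psi(\tr_{E/F}(x_1 \bar y_1 + x_2 \bar y_2))\,dy_1\,dy_2,$$
obtained from the formula for $\omega_{\mu,\psi}(w_0)$ by replacing $\psi$ by $\psi^{-1}$. Since $\Phi^m(y_1,y_2) = \Char_{\CP_E^{3m}}(y_1)\,\Char_{1+\CP_E^m}(y_2)$, the integral factors as $\gamma_{\psi^{-1}} f_1(x_1) f_2(x_2)$ with $f_1,f_2$ exactly the functions computed in the proof of Proposition \ref{prop51}:
$$f_1(x_1) = q_E^{-3m}\,\Char_{\CP_E^{-3m}}(x_1), \qquad f_2(x_2) = q_E^{-m}\,\psi(\tr_{E/F}(x_2))\,\Char_{\CP_E^{-m}}(x_2).$$
(The formula for $f_2$ comes from the change of variable $u = 1 + v$ with $v\in \CP_E^m$, pulling out $\psi(\tr_{E/F}(x_2))$ and recognizing the remaining integral as the Fourier transform of $\Char_{\CP_E^m}$ with respect to the unramified character $\psi\circ \tr_{E/F}$.)

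Finally I would specialize to $(x_1,x_2)=(a_{21},a_{22})$ and invoke the hypotheses: $|a_{21}|\le q_E^{3m}$ gives $a_{21}\in \CP_E^{-3m}$, so $\Char_{\CP_E^{-3m}}(a_{21})=1$, and $|a_{22}|\le q_E^m$ gives $a_{22}\in \CP_E^{-m}$, so $\Char_{\CP_E^{-m}}(a_{22})=1$. Multiplying the two factors and the constant $\gamma_{\psi^{-1}}$ yields exactly $\gamma_{\psi^{-1}} q_E^{-4m}\psi(\tr_{E/F}(a_{22}))$, as claimed.

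There is really no substantial obstacle here; the lemma is essentially a bookkeeping corollary of the computation already carried out inside the proof of Proposition \ref{prop51}, applied at the specific point $(a_{21},a_{22})$ rather than averaged against $\bn(b)$. The only mild care needed is to keep track of the sign of $\psi$ when passing from $\omega_{\mu,\psi}(\dot w_0)$ to $\omega_{\mu,\psi^{-1}}(\dot w_0)$, and to note that for unramified $\psi$ the Fourier transform of $\Char_{\CP_E^k}$ with respect to $\psi\circ \tr_{E/F}$ is $q_E^{-k}\Char_{\CP_E^{-k}}$, which is the normalization implicitly fixed by the choice of self-dual Haar measure.
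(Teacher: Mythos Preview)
Your proposal is correct and follows essentially the same approach as the paper's proof: write $e_2a=(a_{21},a_{22})$, apply the $\dot w_0$ formula for the Weil representation, factor the integral according to the product structure of $\Phi^m$, and evaluate the two one-variable integrals using the hypotheses on $|a_{21}|$ and $|a_{22}|$. The paper carries out the two integrals directly rather than citing the $f_1,f_2$ computation from Proposition~\ref{prop51}, but the content is identical.
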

\begin{proof}
We have
\begin{align*}
&\omega_{\mu,\psi^{-1}}(\dot w_0)\Phi^{m}(e_2a)\\
=&\omega_{\mu,\psi^{-1}}(\dot w_0)\Phi^{m}((a_{21},a_{22}))\\
=&\gamma_{\psi^{-1}}\int_{E^2}\Phi^m(y_1,y_2)\psi(\tr(a_{21}\bar y_1+a_{22}\bar y_2 ))dy_1dy_2\\
=&\gamma_{\psi^{-1}}\int_{\CP_E^{3m}}\psi(\tr(a_{21} \bar y_1))dy_1 \int_{1+\CP_E^m} \psi(\tr_{E/F}(a_{22}\bar y_2))dy_2\\
=&\gamma_{\psi^{-1}} q_E^{-4m}\psi(\tr_{E/F}(a_{22})).
\end{align*}
Here we used $ \psi(\tr(a_{21} \bar y_1))=1$ for $a_{21}\in \CP_E^{-3m}, y_1\in \CP_E^{3m}$ and $\psi(\tr_{E/F}(a_{22}\bar y_2))=\psi(\tr_{E/F}(a_{22})) $ for $a_{22}\in \CP_E^{-m}, y_2\in 1+\CP_E^{m}$.
\end{proof}

\subsection{Twisting by $\GL_2$}
Before we go to the proof of the local converse theorem, we recall the following result of Jacquet-Shalika.

 Let $\phi$ be a smooth complex valued function on $\GL_2(E)$ such that $ \phi(ug)=\psi_E^{-1}(u)\phi(g)$ for all $u\in N^{(2)}$, $g\in \GL_2(E)$.

\begin{prop}[Jacquet-Shalika]{\label{prop53}}
Suppose that for each integer $m$, the set $g\in \GL_2(E)$ such that $|\det(g)|=q_E^m$ and $\phi(g)\ne 0$ is contained in a compact set modulo $N^{(2)}$. Let 
$$A(\phi,W^{(2)},s)=\int_{N^{(2)}\setminus \GL_2(E)}\phi(a)W^{(2)}(a)|\det(a)|^sda,$$
and assume that this integral converges absolutely in some half plane for every irreducible representation $\tau$ and every $W^{(2)}\in \CW(\tau,\psi_E)$. If 
$A(\phi,W^{(2)},s)=0$ for all $\tau$, all $W^{(2)}$ and all $s$ when it is absolutely convergent, 
then $\phi(a)\equiv 0.$
\end{prop}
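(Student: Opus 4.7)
The plan is to reduce the statement to Mellin inversion on the diagonal torus of $\GL_2(E)$ by exploiting the Iwasawa decomposition and a sufficiently rich family of Whittaker functions. First, writing $\GL_2(E) = N^{(2)} T^{(2)} K^{(2)}$ with $T^{(2)}$ the diagonal torus and $K^{(2)} = \GL_2(\CO_E)$, the Whittaker transformation law for both $\phi$ and $W^{(2)}$ lets me rewrite
\[
A(\phi, W^{(2)}, s) = \int_{K^{(2)}} \int_{T^{(2)}} \phi(tk) W^{(2)}(tk) |\det t|_E^s \delta_{B^{(2)}}^{-1}(t)\, dt\, dk.
\]
Since $\phi$ is smooth, some compact open subgroup $K_0 \subset K^{(2)}$ fixes $\phi$ on the right, and $K^{(2)} = \bigsqcup_{i=1}^N k_i K_0$ is a finite partition; it therefore suffices to show $\phi(tk_i) = 0$ for each $i$ and all $t \in T^{(2)}$.

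Second, I would construct, for each coset representative $k_i$, test Whittaker functions that localize to $k_i K_0$. For any unitary character $\chi = \chi_1 \otimes \chi_2$ of $T^{(2)}$ in general position, the principal series $\Ind_{B^{(2)}}^{\GL_2(E)}(\chi)$ is irreducible and generic, and the Jacquet integral supplies a canonical Whittaker functional. Taking smooth sections of $\Ind_{B^{(2)}}^{\GL_2(E)}(\chi)$ supported near $k_i$ and vanishing near $k_j$ for $j \neq i$ yields Whittaker functions $W^{(2)}$ whose restriction to the torus-translate $T^{(2)} k_i$ has the form $\chi(t)\delta_{B^{(2)}}^{1/2}(t) h(t)$ for essentially arbitrary smooth compactly supported $h$ on $T^{(2)}$, while vanishing on $T^{(2)} k_j K_0$ for $j \neq i$.

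Substituting such a $W^{(2)}$ into the hypothesis, and using the compact-support-modulo-$N^{(2)}$ assumption on $\phi$ to guarantee absolute convergence, the identity $A(\phi, W^{(2)}, s) = 0$ collapses to
\[
\int_{T^{(2)}} \phi(tk_i) h(t) \chi(t) |\det t|_E^s \delta_{B^{(2)}}^{-1/2}(t)\, dt = 0
\]
for all such $h$, all unitary characters $\chi$ of $T^{(2)}$, and all $s$ in the convergence range. As $(\chi,s)$ sweeps out all continuous characters of the abelian group $T^{(2)} \cong (E^\times)^2$, Mellin inversion on $(E^\times)^2$ forces $\phi(tk_i) \equiv 0$, which is what we need.

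The main obstacle is the second step: verifying that the torus restrictions of Whittaker functions from generic principal series genuinely separate points via Mellin inversion. One must account for the non-identity Bruhat cell contribution to the Jacquet integral (a Weyl-translate term), which for $\chi$ in general position is absorbed by the freedom in the smooth section. This is essentially the content of Jacquet--Shalika's density theorem specialized to $\GL_2$, equivalent to the standard fact that every irreducible generic representation of $\GL_2(E)$ admits a Kirillov model on $\CS(E^\times)$; this Kirillov viewpoint gives the cleanest verification that the test family $W^{(2)}|_{T^{(2)}}$ is rich enough for Mellin inversion.
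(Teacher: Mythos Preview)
The paper does not give a proof; it cites Lemma~3.2 of \cite{JS} and refers to Corollary~2.1 of \cite{Ch} for the deduction of this particular form. Your sketch goes further by outlining an actual argument, and the overall structure (Iwasawa reduction to finitely many right $K_0$-cosets, then Mellin inversion on the torus) is the correct shape.

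The gap is in your second step. A section $f\in\Ind_{B^{(2)}}^{\GL_2(E)}(\chi)$ with $K^{(2)}$-support near $k_i$ does \emph{not} yield a Whittaker function vanishing on $T^{(2)}k_jK_0$ for $j\ne i$: in the Jacquet integral $W_f(tk_j)=\int_E f(w_0\,n(x)\,tk_j)\,\psi_E^{-1}(x)\,dx$, the $K^{(2)}$-component of $w_0 n(x)t$ sweeps through essentially all of $(B^{(2)}\cap K^{(2)})\backslash K^{(2)}$ as $x$ ranges over $E$, so there is no reason for the integrand to vanish. Nor does the torus restriction $t\mapsto W_f(tk_i)$ take the form $\chi(t)\delta^{1/2}(t)h(t)$ for an arbitrary $h\in\CS(T^{(2)})$; it is constrained by the two-term asymptotic expansion of principal-series Whittaker functions. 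So step~2 as written neither isolates a single coset $k_iK_0$ nor produces a full Mellin family on $T^{(2)}$.

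Your final paragraph identifies the right tool---the Kirillov model of any irreducible generic $\tau$ contains $\CS(E^\times)$---but does not carry it out, and invoking ``Jacquet--Shalika's density theorem'' at that point is exactly what the paper does by citation. A clean execution for $\GL_2$ would take $\tau$ supercuspidal (so the Kirillov space is exactly $\CS(E^\times)$), use right translation by $K^{(2)}$ to pick out a single coset $k_iK_0$, and vary the central character $\omega_\tau$ together with the twist $|\det|^s$ to perform Mellin inversion in the central direction; the compact-support hypothesis on $\phi$ is what makes these integrals converge. This is not hard for $\GL_2$, but it is genuinely more than your step~2 establishes.
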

This is a corollary of Lemma 3.2 in \cite{JS}. For an argument that Lemma 3.2 in \cite{JS} implies the present form of Proposition \ref{prop51}, one can see Corollary 2.1 of \cite{Ch}, for example. \\

We are back to the notation of $\S$2. Let us repeat part of them to avoid ambiguity. We are given an unramified character $\psi$ of $F$ and and an unramified character $\psi_E$ of $E$ such that $\psi_E|_{F}$ is trivial. From them, we defined a character $\psi_U$ of $U$ such that $\psi_U|_{U_\alpha}=\psi_E^{-1}$ and $\psi_U|_{U_\beta}=\psi$. We are given two $\psi_U$-generic representations $\pi$ and $\pi'$ of $G=\RU(2,2)$ such that $\omega_\pi=\omega_{\pi'}$. For a vector $v\in V_\pi$ with $W_v(1)=1$, and an integer $m>0$, we defined the Howe vector $v_m$. Similarly, we have $v'\in V_{\pi'}$ and $v_m'$. We fixed an integer $C$ such that $v$ (resp. $v'$) is fixed by $K_C$ under $\pi$ (resp. $\pi'$). 

\begin{lem}{\label{lemma54}}
\begin{enumerate}
\item Let $a=\begin{pmatrix}a_{11} & a_{12}\\a_{21} & a_{22} \end{pmatrix}\in \GL_2(E)$. If $W_{v_m}(\bm(a)\dot w_1)\ne 0$, then $|a_{22}|_E\le q_F^{7m}=q_E^{7m/2}$ and $|a_{21}|_E\le q_F^{3m}=q_E^{3m/2}$.
\item For each $k$, the set $X_k=\wpair{a\in \GL_2(E)| |\det(a)|=q_E^k, W_{v_m}(\bm(a)\dot w_1) }$ is compact modulo $N^{(2)}$.
\end{enumerate}
\end{lem}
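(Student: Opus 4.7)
The plan is to exploit the two-sided equivariance of the partial Bessel function $W_{v_m}$: its left $(U,\psi_U)$-equivariance and its right $(H_m,\psi_m)$-equivariance coming from the Howe vector construction (Lemma \ref{lemma21}(2)). For part (1), I would pick an element $h=\bx_{\gamma}(r)\in H_m$ where $\gamma$ is a negative root such that $w_1(\gamma)$ is positive and lies in the Siegel unipotent radical $N$. Since $\bm(a)$ normalizes $N$, the conjugate $\bm(a)\dot w_1\, h\,\dot w_1^{-1}\bm(a)^{-1}$ lies in $U$, so we get an identity of the form $\bm(a)\dot w_1\,h=\bn(aB\,{}^{t}\bar a)\,\bm(a)\dot w_1$ with $B\in\Herm_2(F)$ encoding the position of the root $w_1(\gamma)$. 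Comparing the two equivariances yields $\psi_U(\bn(aB\,{}^{t}\bar a))=\psi_m(h)$ whenever $W_{v_m}(\bm(a)\dot w_1)\ne 0$. A direct computation of $\psi_m$, parallel to Lemma \ref{lemma22}(1), gives $\psi_m(h)=1$ for such lower-triangular unipotents (since $\tau_m$ depends only on the $(1,2)$- and $(2,4)$-entries). For $\gamma=-\beta$ with $r\in\CP_F^{3m}$ and $\gamma=-(2\alpha+\beta)$ with $r\in\CP_F^{7m}$, the $(2,2)$-entry of $aB\,{}^{t}\bar a$ works out to $\pm r|a_{21}|^2$ and $\pm r|a_{22}|^2$ respectively, and demanding $\psi$ of these to be trivial for all allowable $r$ extracts the bounds $|a_{21}|_E\le q_F^{3m}$ and $|a_{22}|_E\le q_F^{7m}$.

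For part (2), I would use the Iwasawa decomposition $a=ntk_{0}$ with $n\in N^{(2)}$, $t=\diag(t_{1},t_{2})\in T^{(2)}$, and $k_{0}\in K^{(2)}:=\GL_2(\CO_E)$. Left multiplication by $N^{(2)}$ is absorbed in the quotient, $K^{(2)}$ is compact, so compactness of $X_k/N^{(2)}$ reduces to showing that $t$ lies in a compact subset of $T^{(2)}$ modulo $T^{(2)}(\CO_E)$. The identities $a_{2j}=t_{2}k_{2j}$ together with $\max(|k_{21}|,|k_{22}|)=1$ give $|t_{2}|_{E}=\max(|a_{21}|_{E},|a_{22}|_{E})$, so part (1) yields $|t_{2}|_{E}\le q_F^{7m}$, and combined with $|t_{1}t_{2}|_{E}=q_E^{k}$ we obtain the lower bound $|t_{1}|_{E}\ge q_E^{k}q_F^{-7m}$.

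The main obstacle will be the matching lower bound on $|t_{2}|_{E}$ (equivalently, the upper bound on $|t_{1}|_{E}$). My plan is to split on the Bruhat cell of $\bm(a)\dot w_1$. When $a_{21}=0$, left $N^{(2)}$-reduction leaves a diagonal representative $\bm(a)=\bt(a_{11},a_{22})$, and applying the right $(H_m,\psi_m)$-equivariance with $h=\bx_{\alpha}(r)$, $r\in\CP_E^{-m}$, together with the Weyl identity $\bt(a_{11},a_{22})\dot w_1=\dot w_1\bt(\bar a_{22}^{-1},\bar a_{11}^{-1})$ and the fact that $w_1(\alpha)=\alpha$, forces $a_{11}/a_{22}\in 1+\CP_E^{m}$ by the same mechanism as Corollary \ref{cor23}; since $|a_{11}a_{22}|_{E}=q_E^{k}$ this pins both $|a_{11}|_{E}$ and $|a_{22}|_{E}$ to $q_E^{k/2}$. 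When $a_{21}\ne 0$ we reduce modulo $N^{(2)}$ to the representative $\bigl(\begin{smallmatrix}0&-\det a/a_{21}\\ a_{21}&a_{22}\end{smallmatrix}\bigr)$ in the big cell $Bw_0B$, and a parallel argument with an appropriate Weyl-conjugate of the above right-$H_m$-element produces the analogous lower bound on $|a_{21}|_{E}$, controlling $|a_{12}|_{E}=q_E^{k}/|a_{21}|_{E}$ from above. Together with part (1) these pin all entries of the representative into a compact subset of $\Mat_2(E)$, giving the desired compactness of $X_k/N^{(2)}$.
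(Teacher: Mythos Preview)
Your part (1) is exactly the paper's argument: the paper right-multiplies by $\bx_{-(2\alpha+\beta)}(r)$ with $r\in\CP_F^{7m}$ (using $w_1(-(2\alpha+\beta))=\beta$) to extract $a_{22}\bar a_{22}\in\CP_F^{-7m}$, and by $\bx_{-\beta}(r)$ with $r\in\CP_F^{3m}$ (using $w_1(-\beta)=2\alpha+\beta$) to extract $a_{21}\bar a_{21}\in\CP_F^{-3m}$. One cosmetic slip: the $(2,2)$-entry of $aB\,{}^t\bar a$ is $r\,a_{2j}\bar a_{2j}$, not ``$\pm r|a_{2j}|^2$'' in the literal sense; this does not affect the conclusion.

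For part (2) the paper is terser than you. It simply invokes Iwasawa to reduce to diagonal $a=\diag(a_1,a_2)$, and then uses the single relation
\[
\bt(a_1,a_2)\dot w_1\,\bx_\alpha(-r)=\bx_\alpha\!\big((a_1/a_2)r\big)\,\bt(a_1,a_2)\dot w_1,\qquad r\in\CO_E,
\]
(valid because $w_1(\alpha)=\alpha$) together with $\psi_m(\bx_\alpha(-r))=\psi_E(r)=1$ to force $a_1/a_2\in\CO_E$. Combined with part (1) ($|a_2|_E\le q_F^{7m}$) and $|a_1a_2|=q_E^k$, this bounds both $|a_1|$ and $|a_2|$. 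Your diagonal case is the same computation, except you take $r\in\CP_E^{-m}$ and claim $a_{11}/a_{22}\in 1+\CP_E^m$; in fact the sign in $\dot w_1\bx_\alpha(-r)\dot w_1^{-1}=\bx_\alpha(r)$ gives $a_{11}/a_{22}\in -1+\CP_E^m$, but either way $|a_{11}|=|a_{22}|$, which is what you need.

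The genuine soft spot is your $a_{21}\ne 0$ branch. Your representative $a'=\bigl(\begin{smallmatrix}0&-\det a/a_{21}\\ a_{21}&a_{22}\end{smallmatrix}\bigr)$ puts $\bm(a')\dot w_1$ in the big cell $Bw_0B$, and here the ``parallel argument'' you invoke does not exist in the form you suggest: the mechanism in the diagonal case relied crucially on the fact that $w_1$ fixes the simple root $\alpha$, so that a right $\bx_\alpha(\cdot)\in H_m$ re-emerges on the left inside $U$. Since $w_0$ sends \emph{every} positive root to a negative one, no single $\bx_\gamma(r)\in H_m$ will conjugate through $\dot w_0$ into $U$, and you cannot read off a lower bound on $|a_{21}|$ this way. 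To close this you would have to pass to the Bruhat form $\bm(a')\dot w_1=u_1\,\bt(b,a_{21})\dot w_0\,u_2$ and control $u_2$ (it is essentially $\bx_\alpha(-\bar a_{22}/\bar a_{21})$), which requires a further case analysis. The paper, for its part, does not isolate this case at all --- its Iwasawa reduction to diagonal $a$ is asserted rather than argued --- so your proposal is not less complete than the paper's own proof, but your sketch of the missing step does not work as written.
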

\begin{proof}
(1) Given $r\in \CP_{F}^{7m}$, we have $$\bm(a)w_1 \bx_{-2\alpha-\beta}(r)=\bm(a)\bx_\beta(r) w_1=\bn(b)\bm(a)w_1,$$
where $$b=\begin{pmatrix}a_{12}\bar a_{12}r & a_{12}\bar a_{22}r\\ a_{22}\bar a_{12}r & a_{22}\bar a_{22}r \end{pmatrix}.$$
Since $\bx_{-2\alpha-\beta}(r)\in J_m$, we get
$$W_{v_m}(\bm(a)w_1)=\psi(a_{22}\bar a_{22} r)W_{v_m}(\bm(a)w_1).$$
If $W_{v_m}(\bm(a)w_1)\ne 0$, we get $\psi(a_{22}\bar a_{22} r) =1$ for all $r\in \CP_F^{7m}$. Thus $a_{22} \bar a_{22}\in \CP_F^{-7m}$. Thus $|a_{22}|_E=|a_{22}\bar a_{22}|_F\le q_F^{7m}=q_E^{7m/2}$.

For the second part, we take $r\in \CP_F^{3m}$, we have the relation
$$\bm(a)w_1 \bx_{-\beta}(r)=\bm(a)\bx_{2\alpha+\beta}(r) w_1=\bn(b)\bm(a)w_1,$$
with $$b=\begin{pmatrix} a_{11}\bar a_{11}r & a_{11}\bar a_{21}r\\ a_{21}\bar a_{12} r & a_{21}\bar a_{21}r \end{pmatrix}.$$
Since $\bx_{-\beta}(r)\in J_m$, we have 
$$W_{v_m}(\bm(a)w_1)=\psi(a_{21}\bar a_{21} r) W_{v_m}(\bm(a)w_0).$$
If $W_{v_m}(\bm(a)w_1)\ne 0$, we get $\psi(a_{21} \bar a_{21} r)=1$ for all $r\in \CP_F^{3m}$, thus $a_{21}\bar a_{21}\in \CP_F^{-3m}$. Thus $|a_{21}|_E=|a_{21}\bar a_{21}|_F\le q_F^{3m}=q_E^{3m/2}$.

(2) From the Iwasawa decomposition of $\GL_2$, it suffices to show that the set $$X_k:=\wpair{\diag(a_1,a_2)\in \GL_2(F): |a_1a_2|=q_E^k, W_{v_m}(\bm(\diag(a_1,a_2))\dot w_1)\ne 0}$$ is compact. Take $r\in \CO_E$, we have $\bx_{\alpha}(-r)\in H_m$ and $\psi_U(\bx_{\alpha}(r))=1$. Write $t=\bt(a_1,a_2)$. From the relation
$$ t\dot w_1 \bx_{\alpha}(-r)=t\bx_{\alpha}(r)\dot w_1= \bx_{\alpha}(a_1r/a_2)t\dot w_1,$$
we get $\psi_U(\bx_{\alpha}(a_1r/a_2))W_{v_m}(t\dot w_1)=W_{v_m}(t\dot w_1)$.
Thus if $W_{v_m}(t\dot w_1)\ne 0$, we get $a_1/a_2\in \CO_E$. By (1), we get
$$X_k\subset \wpair{\diag(a_1,a_2): |a_1a_2|=q_F^k, a_1/a_2\in \CO_E, a_2^2\in \CP^{-7m}},$$
which is clearly compact.
\end{proof}

\begin{prop}{\label{prop55}}
Let $\mu$ be a fixed character such that $\mu|_{F^\times}=\epsilon_{E/F}$. Suppose that
 \begin{enumerate}
 \item $\gamma(s,\pi\times \mu \eta, \psi)=\gamma(s,\pi'\times \mu \eta, \psi)$ for all quasi-character $\eta$ of $E^\times$, and
 \item $\gamma(s,\pi\times \mu \tau,\psi)=\gamma(s,\pi'\times \mu \tau, \psi)$ for all irreducible representations $\tau$ of $\GL_2(E)$,\end{enumerate}
 then we have $$W_{v_m}(t\dot w)=W_{v_m'}(t\dot w)$$
for all $t\in T$, $m\ge 4^{9}C$ and $w=w_1$ or $w=w_0$.
\end{prop}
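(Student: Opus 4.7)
The overall strategy is to apply the local functional equation of Proposition 2.1 (for twists by $\mu\tau$ with $\tau$ an irreducible generic representation of $\GL_2(E)$) to carefully chosen test data built from Howe vectors, and then to extract the desired equality of partial Bessel functions via the Jacquet--Shalika converse lemma (Proposition 5.3). The $\GL_1$--twist assumption, through Proposition 4.1, Proposition 4.2 and Corollary 2.6, will already have restricted the support of $W_{v_m}-W_{v_m'}$ to a small enough locus inside the open $P$--Bruhat cell $Pw_1P$ for this argument to succeed.

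Concretely, for an arbitrary irreducible generic $\tau$ of $\GL_2(E)$ and an arbitrary $\epsilon\in V_\tau$, I plug $W=W_{v_m}$ (resp.\ $W_{v_m'}$), $\phi_2=\Phi^m$ (the Weil--representation Howe vector of Proposition 5.1), and $\xi_s=\xi_s^{i,\epsilon}$ into the functional equation, with $i$ chosen large enough that Lemma 3.2 and Proposition 3.3 both apply. The right--hand side $\Psi(W,\Phi^m,\xi_s^{i,\epsilon})$ unfolds, using the support $\supp f_s^{i,\epsilon}=P\cdot\hat N_i$ together with the $\hat N_i$--invariance of $\Phi^m$ (Proposition 5.1(2)), to a nonzero constant multiple of an integral over $N^{(2)}\backslash\GL_2(E)$ whose integrand depends only on $W_{v_m}(\bm(a))$; since $\bm(a)\in B$, Corollary 2.4 forces this RHS to be the same for $\pi$ and $\pi'$. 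The $\gamma$--factor hypothesis therefore forces the two LHS values to coincide as well.

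Next I unfold the LHS $\Psi(W,\Phi^m,M(s)\xi_s^{i,\epsilon})$ along the $P$--Bruhat decomposition. The open cell is $Pw_1P=\{\bm(a)\dot w_1 n:a\in\GL_2(E),\,n\in N\}$ (observe that both $Bw_1B$ and $Bw_0B$ lie inside it, since $\dot w_0=\dot s_\alpha\dot w_1$), and Corollary 2.6 together with Proposition 4.1 shows that the support of $W_{v_m}-W_{v_m'}$ lies in this open cell. Parameterizing $g=\bm(a)\dot w_1 n$ with $a\in N^{(2)}\backslash\GL_2(E)$ and $n\in N$, Proposition 3.3 combined with the $P$--equivariance of $\tilde\xi^{i,\epsilon}_{1-s}$ gives $\tilde f^{i,\epsilon}_{1-s}(\bm(a)\dot w_1 n)=\vol(\hat N_i)|\det a|^{3/2-s}\tilde W^{(2)}_\epsilon(a)$ on the compact set $n\in N_m$. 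By Proposition 4.2 the integrand $[W_{v_m}-W_{v_m'}](\bm(a)\dot w_1 n)$ vanishes for $n\in N-N_m$. On $n\in N_m$, Lemma 2.1(2) gives $W_{v_m}(\bm(a)\dot w_1 n)=\psi_U(n)W_{v_m}(\bm(a)\dot w_1)$ and Proposition 5.1(1) gives $\omega_{\mu,\psi^{-1}}(n)\Phi^m=\psi_U^{-1}(n)\Phi^m$, so the $\psi_U(n)$ factors cancel and the $n$--integration produces only $\vol(N_m)$. After this collapse the difference of LHS values equals
\[
C\int_{N^{(2)}\backslash\GL_2(E)}\!\bigl[W_{v_m}(\bm(a)\dot w_1)-W_{v_m'}(\bm(a)\dot w_1)\bigr]\,\omega_{\mu,\psi^{-1}}(\bm(a))(\omega(\dot w_1)\Phi^m)(e_2)\,|\det a|^{3/2-s}\,\tilde W^{(2)}_\epsilon(a)\,da
\]
for an explicit nonzero constant $C=\vol(\hat N_i)\vol(N_m)$.

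This identity holds for every irreducible $\tau$, every $\epsilon\in V_\tau$ (hence every $\tilde W^{(2)}_\epsilon\in\CW(\tau^*,\psi_E)$), and all $s$ in a right half--plane of convergence. Setting $\phi(a):=[W_{v_m}(\bm(a)\dot w_1)-W_{v_m'}(\bm(a)\dot w_1)]\,\omega_{\mu,\psi^{-1}}(\bm(a))(\omega(\dot w_1)\Phi^m)(e_2)$ and checking the support--modulo--$N^{(2)}$ condition via Lemma 5.4, Proposition 5.3 forces $\phi\equiv 0$. Since $\omega_{\mu,\psi^{-1}}(\bm(a))(\omega(\dot w_1)\Phi^m)(e_2)=\mu(\det a)|\det a|^{1/2}(\omega(\dot w_1)\Phi^m)(e_2 a)$ is a scaling of a Schwartz function with open nonempty support, this yields $W_{v_m}(\bm(a)\dot w_1)=W_{v_m'}(\bm(a)\dot w_1)$ on an open subset of $\GL_2(E)$, and Howe--vector covariance extends the equality to all $a\in\GL_2(E)$. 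Specializing $a=t\in T$ gives the $\dot w_1$ case; specializing $a=t\dot s_\alpha$ and using $\dot w_0=\dot s_\alpha\dot w_1$ (modulo a $T$--correction handled by Lemma 2.1(2) and $\omega_\pi=\omega_{\pi'}$) gives the $\dot w_0$ case. The principal technical obstacle is the careful bookkeeping in the LHS unfolding: confirming that only the open cell $Pw_1P$ contributes, that the Howe--Whittaker and Howe--Weil covariances on $N_m$ exactly cancel, and that all measure constants combine into a single Jacquet--Shalika--ready integral.
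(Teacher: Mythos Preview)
Your overall architecture is exactly the paper's: feed Howe vectors and the section $\xi_s^{i,\epsilon}$ into the functional equation, show the $\Psi$ side matches using the equality of partial Bessel functions on the small Bruhat cells, collapse the $\tilde\Psi$ side onto $N_m$ using Proposition~4.2, and finish with Jacquet--Shalika. Two points, however, need correction.

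First, a minor slip: in the $\Psi(W,\Phi,\xi_s^{i,\epsilon})$ computation you write ``since $\bm(a)\in B$''. This is false for general $a\in\GL_2(E)$; one has only $\bm(a)\in M\subset B\cup Bs_\alpha B$. You must invoke Corollary~2.6 (which also covers $Bs_\alpha B$), not Corollary~2.4.

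Second, and this is the genuine gap: you use $\Phi^m$, whereas the paper uses $\Phi^{k}$ with $k=3m/2$. After Jacquet--Shalika you obtain
\[
\bigl[W_{v_m}(\bm(a)\dot w_1)-W_{v_m'}(\bm(a)\dot w_1)\bigr]\cdot\omega(\bm(a)\dot w_1)\Phi(e_2)=0,
\]
which yields equality of the Bessel functions only on the support of the Weil factor. For $a=\diag(a_1,a_2)$ (so $\bm(a)\dot w_1=t\dot w_1$), one computes via Lemma~5.2 that this Weil factor is nonzero precisely when $|a_2|\le q_E^{3k}$; on the other hand Lemma~5.4(1) gives $W_{v_m}(t\dot w_1)=0$ only when $|a_2|>q_E^{7m/2}$. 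With your choice $k=m$ these regions leave the uncovered band $q_E^{3m}<|a_2|\le q_E^{7m/2}$; with the paper's choice $k=3m/2$ one has $3k=9m/2>7m/2$ and the regions overlap as needed. The analogous comparison for $t\dot w_0$ requires $k\ge 3m/2$ exactly (Lemma~5.4 gives $|a_{21}|\le q_E^{3m/2}$, and the Weil support condition is $|a_{21}|\le q_E^{k}$). Your sentence ``Howe--vector covariance extends the equality to all $a\in\GL_2(E)$'' does not repair this: right $H_m$--equivariance only lets you move $a_2$ within a coset of $1+\CP_E^m$, which cannot bridge the gap in valuation. The fix is to take $\Phi^{k}$ with $k=3m/2$ from the start and then conclude, as the paper does, by combining Lemma~5.2 (nonvanishing of the Weil factor) with Lemma~5.4 (vanishing of $W_{v_m}$) rather than by an extension argument.
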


\begin{proof}
The proof is quite similar to the proof of Proposition \ref{prop41}.

Let $m= 4^9C$ and let $k=\frac{3m}{2}>m$. We have defined $\Phi^k\in \CS(E^2)$. For simplicity, we will write $\omega_{\mu,\psi^{-1}}$ as $\omega$.

For a vector $\epsilon\in V_\tau$, we take an integer $i$ such that $i\ge \max\wpair{k=3m/2, i_2(\epsilon), I( N_m, \epsilon) }$ see Lemma \ref{lemma32} and Proposition \ref{prop33} for the definition of $i_2(\epsilon)$ and $I(N_m, \epsilon)$. By Lemma \ref{lemma32}, we have defined a section $\xi_{s}^{i,\epsilon}\in I(s,\tau)$ in $\S$4.2. 

Let $W=W_{v_m}$ or $W_{v_m'}$. We will compute the integral $\Psi(W,\xi_s^{i,\epsilon}, \Phi^{k})$. This integral is defined over $U\setminus G$, and we will take the integral on the open dense subset $U\setminus NM\hat N=N^{(2)}\setminus \GL_2(E)\times \hat N$. For $g=n\bm(a)\hat n$, the Haar measure can be taken as $dg=|\det(a)|_E^{-2}dn da d\hat n. $ By the definition of $\xi_s^{i,\epsilon}$ and Eq.(\ref{eq34}), we have
\begin{align*}
&\quad \Psi(W,\xi_s^{i,\epsilon},\Phi^{k})\\
&=\int_{U\setminus G}W(g)f_{\xi_s^{i,\epsilon}}(g)(\omega(g))\Phi^{k}(e_2)dg\\
&=\int_{N^{(2)}\setminus \GL_2(E)\times \hat N}W(\bm(a)\hat n)f_{\xi_s^{i,\epsilon}}(\bm(a)\hat n)\omega(\bm(a)\hat n)\Phi^{k}(e_2)|\det(a)|^{-2}da d\hat n\\
&=\int_{N^{(2)}\setminus \GL_2(E)\times \hat N_i}W(\bm(a)\hat n)|\det(a)|^{s-3/2} W_{\epsilon}^{(2)}(a)\omega(\bm(a)\hat n)\Phi^{k}(e_2)da d\hat n.
\end{align*}
Since $i\ge k>m$, we have $ \hat N_i\subset \hat N_k\subset \hat N_m$. Thus for $\hat n \in  \hat N_i$, we have $W(g\hat n)=W(g)$ and $\omega(\hat n)\Phi^{k}=\Phi^{k}$ by Lemma \ref{lemma21} and Proposition \ref{prop51}. Thus we have
\begin{align}
&\quad \Psi(W,\xi_s^{i,\epsilon},\Phi^{k}) \label{eq58}\\
&=\vol(\hat N_i)\int_{E^1N^{(2)}\setminus \GL_2(E)} W(\bm(a))W_{\epsilon}^{(2)}(a)|\det(a)|^{s-1}\mu(\det(a))\Phi^{k}(e_2a)da,\nonumber
\end{align}
where $E^1$ embeds into $T^{(2)}\subset \GL_2(E)$ diagonally.

Since $M\subset B\cup Bs_\alpha B$, and we have showed that $W_{v_m}(g)=W_{v_m'}(g)$ for $g\in B\cup Bs_\alpha B$ in Corollary \ref{cor26},  we get
\begin{equation}
\Psi(W_{v_m},\xi_s^{i,\epsilon}, \Phi^{k})=\Psi(W_{v_m'}, \xi_s^{i,\epsilon},\Phi^{k}).\label{eq59}
\end{equation}

Next, we compute the integral $\Psi(W,\tilde \xi^{i,\epsilon}_{1-s}, \Phi^{k})$ for $\tilde \xi^{i,\epsilon}_{1-s}=M(s)(\xi_{s}^{i,\epsilon})$. We will take this integral on the open dense set $U\setminus NMw_1N$ of $U\setminus G$. 
\begin{align}
&\quad \Psi(W,\tilde\xi_{1-s}^{i,\epsilon},\Phi^{k}) \nonumber\\
&=\int_{N^{(2)}\setminus \GL_2(E)\times N} W(\bm(a)w_1n) f_{\tilde \xi_{1-s}^{i,\epsilon}}(\bm(a)w_1n)\omega(\bm(a)w_1n)\Phi^{k}(e_2)|\det(a)|^{-2}da dn\nonumber\\
\label{eq510}&=\int_{N^{(2)}\setminus \GL_2(E)\times N_m}W(\bm(a)w_1n) f_{\tilde \xi_{1-s}^{i,\epsilon}}(\bm(a)w_1n)\omega(\bm(a)w_1n)\Phi^{k}(e_2)|\det(a)|^{-2}dadn\\
\label{eq511}&+\int_{N^{(2)}\setminus \GL_2(E)\times (N-N_m)}W(\bm(a)w_1n) f_{\tilde \xi_{1-s}^{i,\epsilon}}(\bm(a)w_1n)\omega(\bm(a)w_1n)\Phi^{k}(e_2)|\det(a)|^{-2}dadn.
\end{align}
Since $i\ge I(N_m,\epsilon)$, we have 
$$f_{\tilde \xi_{1-s}^{i,\epsilon}}( \bm(a)w_1n)=\vol(\hat N_i)|\det(a)|^{3/2-s}\tilde W^{(2)}_{\epsilon}(a), i\ge I(N_m, \epsilon), n\in N_m,$$
see Proposition \ref{prop33} and Eq.(\ref{eq35}).

For $n\in N_m\subset N_k$, by Lemma \ref{lemma21} and Proposition \ref{prop51} (2), we have $$W(\bm(a)w_1n)=\psi_U(n)W(\bm(a)w_1), \omega(\bm(a)w_1n)\Phi^{k}(e_2)=\psi_U^{-1}(n)\omega(\bm(a)w_1)\Phi^{k}(e_2).$$ 
Thus the term Eq.(\ref{eq510}) becomes
\begin{align*}
\vol(\hat N_i)\vol(N_m)\int_{N^{(2)}\setminus \GL_2(E)} W(\bm(a)w_1) \tilde W_{\epsilon}(a)|\det(a)|^{-s-1/2}\omega(\bm(a)w_1)\Phi^{k,\chi}(e_2)da.
\end{align*}

By assumption (1) and Proposition \ref{prop42}, we get $W_{v_m}(\bm(a)\dot w_1n)=W_{v_m'}(\bm(a)\dot w_1n)$ for $n\in N- N_m$. Thus the term Eq.(\ref{eq511}) for $W=W_{v_m}$ and for $W=W_{v_m'}$ are the same. We then get
\begin{align}
&\Psi(W_{v_m}, \tilde \xi_{1-s}^{i,\epsilon},\Phi^{k,\chi})-\Psi(W_{v_m'}, \tilde \xi_{1-s}^{i,\epsilon},\Phi^{k}) \label{eq512}\\
=&\vol(\hat N_i)\vol(N_m)\int_{E^1N^{(2)}\setminus \GL_2(E)} (W_{v_m}(\bm(a)w_1)-W_{v_m'}(\bm(a)w_1)) \nonumber\\
\quad &\cdot \omega(\bm(a)w_1)\Phi^{k}(e_2) \tilde W_{\epsilon}(a)|\det(a)|^{-s-1/2}da.\nonumber
\end{align}
By Eq.(\ref{eq59}), Eq.(\ref{eq512}) and the local functional equation, we get
\begin{align}
&d_m ( \gamma(s,\pi, \omega_{\mu,\psi^{-1},\chi},\tau)-\gamma(s,\pi', \omega_{\mu,\psi^{-1},\chi},\tau))\label{eq513}\\
\nonumber=&\int_{N^{(2)}\setminus \GL_2(E)} (W_{v_m}(\bm(a)w_1)-W_{v_m'}(\bm(a)w_1)) \omega(\bm(a)w_1)\Phi^{k}(e_2) \tilde W_{\epsilon}(a)|\det(a)|^{-s-1/2}da,
\end{align}
where $d_m=\Psi(W, \xi_s^{i,m}, \Phi^{k}) \vol(N_m)^{-1}\vol(\hat N_i)^{-1}$, which is independent of $i$.

By our assumption on $\gamma$-factors, we have
$$\int_{N^{(2)}\setminus \GL_2(E)} (W_{v_m}(\bm(a)w_1)-W_{v_m'}(\bm(a)w_1)) \omega(\bm(a)w_1)\Phi^{k}(e_2) \tilde W_{\epsilon}(a)|\det(a)|^{-s-1/2}da\equiv 0. $$
By Proposition \ref{prop53} and Lemma \ref{lemma54} (2), we get

\begin{equation}\label{eq514}
(W_{v_m}(\bm(a)w_1)-W_{v_m'}(\bm(a)w_1)) \omega(\bm(a)w_1)\Phi^{k}(e_2)\equiv 0.
\end{equation}

For $t=\bt(a_1,a_2)\in T$, we have $$\bt(a_1,a_2)\dot w_1=\bm\begin{pmatrix}& a_1\\ a_2 & \end{pmatrix}\dot w_0.$$
By Lemma \ref{lemma52}, for $|a_2|\le q_E^{3k}$, we have $\omega(t\dot w_1)\Phi^{k}(e_2)\ne 0$. By Eq.(\ref{eq514}), we get
\begin{equation} \label{eq515}W_{v_m}(t\dot w_1)=W_{v_m'}(t\dot w_1), \forall t=\bt(a_1,a_2) \textrm{ with }|a_2|\le q_E^{3k}.\end{equation}
From Lemma \ref{lemma54} (1),  we get \begin{equation}\label{eq516}W_{v_m}(t\dot w_1)=W_{v_m'}(t\dot w_1)=0, \textrm{ if }  |a_{2}|> q_E^{7m/2}.\end{equation}  Since $3k>7m/2$, by Eq.(\ref{eq515}) and Eq.(\ref{eq516}), we get 
$$W_{v_m}(t\dot w_1)=W_{v_m'}(t\dot w_1), \forall t\in T.$$

On the other hand, we have 
$$\bt(a_1,a_2)\dot w_0=\bm (a')\dot w_1, \textrm{ with } a'=\begin{pmatrix}& a_1\\ a_2 & \end{pmatrix}\in \GL_2(E).$$
By Lemma \ref{lemma52} , for $|a_2|\le q_E^k$, we have
$$\omega(\bt(a_1,a_2)\dot w_0)\Phi^{k}(e_2)\ne 0.$$
By Eq.(\ref{eq514}), we get
\begin{equation}\label{eq517}W_{v_m}(\bt(a_1,a_2)w_0) =W_{v_m}(\bm(a')w_1)=W_{v_m'}(\bm(a')w_1)=W_{v_m'}(\bt(a_1,a_2)w_0), \textrm{ if } |a_2|\le q_E^k.\end{equation}
By Lemma \ref{lemma54} (1), we have
\begin{equation}\label{eq518}W_{v_m}(\bt(a_1,a_2)w_0)=0=W_{v_m'}(\bt(a_1,a_2)w_0), \textrm{ if } |a_2|_E> q_E^{3m/2}=q_E^k. \end{equation}
From Eq.(\ref{eq517}) and Eq.(\ref{eq518}), we get 
$$W_{v_m}(tw_0)=W_{v_m'}(tw_0), \forall t\in T.$$
Thus we proved $W_{v_m}(tw_1)=W_{v_m'}(tw_1)$ and $W_{v_m}(tw_0)=W_{v_m'}(tw_0)$ for all $t\in T$ and $m=4^9L$. The same is true for $m\ge 4^9L$ by Lemma \ref{lemma21} (3) and Proposition \ref{prop25}. This finishes the proof.
\end{proof}
\begin{thm}[Local Converse Theorem for $\RU_{E/F}(2,2)$ when $E/F$ is unramified] \label{thm56}
Assume $E/F$ is unramified. Let $\pi,\pi'$ be two $\psi_U$-generic irreducible smooth representations of $\RU_{E/F}(2,2)$ with the same central character. If $$\gamma(s,\pi\times  \eta,\psi_U)=\gamma(s,\pi'\times \eta,,\psi_U), \textrm{ and } \gamma(s,\pi\times \tau,\psi_U)=\gamma(s,\pi'\times \tau,\psi_U),$$
for all quasi-character $\eta$ of $E^\times$ and all irreducible smooth representation $\tau$ of $\GL_2(E)$, then $\pi\cong \pi'$.
\end{thm}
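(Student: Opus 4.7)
The plan is to piece together the cell-by-cell equalities of partial Bessel functions established in \S\S2--5 via the Bruhat decomposition of $G$, obtain $W_{v_m}=W_{v_m'}$ as functions on all of $G$ for one sufficiently large $m$, and then invoke the rigidity of Whittaker models for irreducible generic representations. Fix any $v\in V_\pi$ and $v'\in V_{\pi'}$ with $W_v(1)=W_{v'}(1)=1$, let $C=C(v,v')$ be the associated stabilization level, and form the Howe vectors $v_m,v_m'$ of \S2.

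First I would induct on the Bruhat order on $\bW=\{1,s_\alpha,s_\beta,s_\alpha s_\beta,s_\beta s_\alpha, w_2, w_1, w_0\}$ to show, for a single $m$ large enough (it suffices to take $m\geq 4^{10}C$ with the sequence $a_0=0,a_1=1,a_2=3,a_3=6,a_4=10$ of Proposition~\ref{prop25}), that $W_{v_m}(g)=W_{v_m'}(g)$ on every Bruhat cell $BwB$. The base cases $w\in\{1,s_\alpha,s_\beta,s_\alpha s_\beta,s_\beta s_\alpha\}$ are delivered by Corollary~\ref{cor26}(1). For $w=w_2$, Proposition~\ref{prop41} supplies equality on the torus slice $T\dot w_2$, which together with the induction hypothesis on shorter Weyl elements feeds Proposition~\ref{prop25}(2) and propagates the equality to all of $Bw_2B$. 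For $w=w_1$ and $w=w_0$, Proposition~\ref{prop55} supplies the analogous input on $T\dot w_1$ and $T\dot w_0$, and the same application of Proposition~\ref{prop25}(2) extends each to its full Bruhat cell. Assembling these across all eight cells yields a single smooth function $W$ on $G$ with $W=W_{v_m}=W_{v_m'}$ and $W(1)=1$.

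With $W\neq 0$ sitting simultaneously in $\CW(\pi,\psi_U)$ and $\CW(\pi',\psi_U)$ as the same function, the intersection $\CW(\pi,\psi_U)\cap\CW(\pi',\psi_U)$ is a non-zero $G$-invariant (under right translation) subspace of two irreducible Whittaker models. Irreducibility of $\pi$ and $\pi'$ forces the intersection to coincide with both spaces, so $\CW(\pi,\psi_U)=\CW(\pi',\psi_U)$ as abstract $G$-representations; the uniqueness of the Whittaker model then identifies this common realization with $\pi$ and with $\pi'$, giving $\pi\cong\pi'$.

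The genuine work has already been carried out in the preparatory sections: the analysis of Weil-representation Howe vectors $\Phi^k$ (Proposition~\ref{prop51}, Lemma~\ref{lemma54}), the invocation of Jacquet--Shalika's uniqueness (Proposition~\ref{prop53}) inside the local functional equation to prove Proposition~\ref{prop55}, and the delicate stability property of partial Bessel functions (Proposition~\ref{prop25}). Given those ingredients, the remaining step is essentially bookkeeping: one must choose one exponent $m$ large enough that all the thresholds $4^{a_{l(w)}}C$ arising from the inductive use of Proposition~\ref{prop25} are simultaneously met, and then package the cell-by-cell identities into a single global equality on $G$. The hardest conceptual ingredient that this final assembly leans on is the $w_0$-cell case, since propagating equality across the big cell requires the Proposition~\ref{prop55} input on $T\dot w_0$, which in turn uses both the $\GL_1$ and $\GL_2$ twisted gamma factor hypotheses.
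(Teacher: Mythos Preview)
Your proposal is correct and follows exactly the paper's (very terse) strategy: assemble the torus-slice equalities from Corollary~\ref{cor26}, Proposition~\ref{prop41}, and Proposition~\ref{prop55} into the full Bruhat decomposition via Proposition~\ref{prop25}(2), then conclude by uniqueness of Whittaker models. One minor numerical slip worth fixing: with your sequence $a_3=6$, the hypothesis of Proposition~\ref{prop25}(2) at $w=w_0$ demands the equality on $T\dot w_1$ for all $k\ge 4^{6}C$, whereas Proposition~\ref{prop55} only supplies it for $k\ge 4^{9}C$; choosing instead $a_3=9$, $a_4=13$ (hence $m\ge 4^{13}C$) makes the induction go through without any change to the argument.
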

\begin{proof}
If $\psi_U$ is unramified, by Proposition \ref{prop55} and Proposition \ref{prop25}, we get 
$$W_{v_m}(g)=W_{v'_m}(g),$$
for all $g\in G$. Thus $\pi\cong \pi'$ by the uniqueness of the Whittaker functional. If $\psi_U$ is not unramified, it suffices to modify the above proof a little bit.
\end{proof}

\section{A new local zeta integral for $\RU(2,2)\times \Res_{E/F}(\GL(1))$}
The local zeta integral for $\RU(2,2)\times \Res_{E/F}(\GL_1)$ we considered in $\S$1 is the analogue of the $\Sp(n)\times \GL_m$ case developed by Ginzburg, Rallis and Soudry in \cite{GRS1, GRS2}, which comes from a global zeta integral. 

In this section, we consider a new local zeta integral for $\RU(2,2)\times \Res_{E/F}(\GL_1)$ in the generic case and prove the local functional equation and hence the existence of the $\gamma$-factors. We also prove that this new gamma factor can be used to obtain the local converse theorem. The advantage of the new local zeta integral is that it does not involve the Weil representation so that it is simpler than the old one. But it is not clear if the new local zeta integral comes from a global zeta integral.

\subsection{A new local zeta integral for $\RU(2,2)\times \Res_{E/F}(\GL_1)$ and the local functional equation}
Let $F$ be a $p$-adic field, $E/F$ be a quadratic field extension. Let $G=\RU(2,2)(F)$ and $\pi$ be an irreducible admissible smooth representation of $G$. Let $\psi$ (resp. $\psi_E$) be a fixed nontrivial additive character of $F$ (resp. $E$). We require that $\psi_E|_F$ is trivial. We define a generic character $\psi_U$ on $U$ as before, i.e., $\psi_U|_{U_\alpha}=\psi_E$ and $\psi_U|_{U_\beta}=\psi$. 

We assume $\pi$ is $\psi_U$-generic. Let $\CW(\pi,\psi)$ be the set of Whittaker functions of $\pi$. For $W\in \CW(\pi,\psi)$, and a quasi-character $\eta$ of $E^\times$, we consider the integral
 $$\Psi(s,W,\eta)=\int_{E^\times}W(\bt(a))\eta(a)|a|_E^{s-3/2}da.$$
 Recall that $\bt(a)=\bm(\diag(a,1)).$
From a standard estimate, we can show that $\Psi(s,W,\eta)$ is absolutely convergent when $\Re(s)>>0$ and defines a rational function of $q_E^{-s}$.

We consider the integral
\begin{align}&\quad \widetilde \Psi(1-s, W, \eta^{*}) \nonumber\\
&=\int_{E^\times} \int_{E\times F}W\left(\bt(a)\dot w_2 \bx_{\alpha+\beta}(x)\bx_{2\alpha+\beta}(y)\right)dxdy\eta^*( a)|a|^{-s-1/2}da, \label{eq63}
\end{align}
where $\eta^*(a)=\eta(\bar a^{-1})$. Similarly, one can show that $\widetilde \Psi(1-s,\widetilde W, \eta^{-1})$ is absolutely convergent for $\Re(s)<<0$ and defines a meromorphic function of $q_E^{-s}$.

Consider the linear form $W\mapsto B(W)=\widetilde \Psi(1-s,  W, \eta^{*})$ on $\CW( \pi, \psi_U)$. 
\begin{lem}\label{lem63} Let $\psi_N$ be the character of $N$ defined by $\psi_N=\psi_U|_{N}$. Then we have
$$B(\pi(n)W)=\psi_N(n)B(W), n\in N,$$
and 
$$B(\pi(\bt(a))W)=\eta^{-1}(a)|a|^{3/2-s}B(W), a\in E^\times.$$
\end{lem}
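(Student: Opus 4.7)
The plan is to verify both identities by directly manipulating the argument of $W$ inside the integrand: commute the acting element ($n$ or $\bt(a_0)$) past $\bt(a)\dot w_2\bx_{\alpha+\beta}(x)\bx_{2\alpha+\beta}(y)$ to the leftmost position, then use either a change of integration variables (to absorb translations) or the Whittaker transformation property (to pull out a character). Two matrix identities drive the computation: since $w_2(\beta)=\beta$, one checks directly with $\dot w_2=\dot s_\alpha\dot s_\beta\dot s_\alpha$ that $\dot w_2\bx_\beta(r)\dot w_2^{-1}=\bx_\beta(r)$ (with sign $+1$), and a short computation on $T$ gives $\dot w_2\bt(a_0,1)\dot w_2^{-1}=\bt(\bar a_0^{-1},1)$.

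For $n\in N$, since $N$ is abelian (as $\bn(b_1)\bn(b_2)=\bn(b_1+b_2)$), write $n=\bx_\beta(w_0)\bx_{\alpha+\beta}(y_0)\bx_{2\alpha+\beta}(z_0)$ where $w_0=b_{22}\in F$, $y_0=b_{12}\in E$, $z_0=b_{11}\in F$; note $\psi_N(n)=\psi(w_0)$. The factors $\bx_{\alpha+\beta}(y_0)$ and $\bx_{2\alpha+\beta}(z_0)$ merge with the running integration variables $x$ and $y$ and disappear under additive translation. The remaining $\bx_\beta(w_0)$ passes through $\dot w_2$ unchanged (by the first matrix identity above) and then commutes with $\bt(a)=\bt(a,1)$, since $\beta(\bt(a,1))=1\cdot\bar 1=1$. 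The Whittaker property of $W$ then extracts $\psi(w_0)=\psi_N(n)$, giving the first equivariance.

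For $\bt(a_0)$, combine the conjugation rules $\bt(a_0)\bx_{\alpha+\beta}(x)\bt(a_0)^{-1}=\bx_{\alpha+\beta}(a_0 x)$ and $\bt(a_0)\bx_{2\alpha+\beta}(y)\bt(a_0)^{-1}=\bx_{2\alpha+\beta}(\Nm(a_0)y)$ with $\dot w_2\bt(a_0)=\bt(\bar a_0^{-1})\dot w_2$ to rewrite the $W$-argument as
\[
\bt(a\bar a_0^{-1})\dot w_2\,\bx_{\alpha+\beta}(a_0^{-1}x)\,\bx_{2\alpha+\beta}(\Nm(a_0)^{-1}y).
\]
Now substitute $x\mapsto a_0 x$, $y\mapsto \Nm(a_0)y$, $a\mapsto a\bar a_0$. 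The Jacobians are $|a_0|_E$ (from $dx$ on $E$) and $|\Nm(a_0)|_F=|a_0|_E$ (from $dy$ on $F$); the multiplicative Haar measure $da$ on $E^\times$ contributes no Jacobian. Combined with $\eta^*(a\bar a_0)=\eta^*(a)\eta(a_0)^{-1}$ and $|a\bar a_0|_E^{-s-1/2}=|a|_E^{-s-1/2}|a_0|_E^{-s-1/2}$, these assemble to the factor $\eta(a_0)^{-1}|a_0|_E^{\,3/2-s}$, as desired.

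The only delicate points are the two matrix identities (which must be verified with the specific representatives fixed in \S1), and the absolute-value bookkeeping — in particular that the $y$-integral contributes $|\Nm(a_0)|_F=|a_0|_E$ rather than $|a_0|_F^2$, so that the $x$- and $y$-Jacobians balance exactly, and that $da$ is the multiplicative Haar measure so that the $a$-substitution introduces no extra factor of $|a_0|_E$.
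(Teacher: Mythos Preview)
Your proof is correct and carries out precisely the ``straightforward matrix calculation'' to which the paper alludes (the paper omits the details entirely). The two key identities $\dot w_2\bx_\beta(r)\dot w_2^{-1}=\bx_\beta(r)$ and $\dot w_2\bt(a_0)\dot w_2^{-1}=\bt(\bar a_0^{-1})$ hold with the paper's chosen representatives, and your Jacobian bookkeeping (in particular $|\Nm(a_0)|_F=|a_0|_E$) is accurate, so the argument goes through as written.
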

\begin{proof}
This follows from a straightforward matrix calculation, and we omit the details.
\end{proof}

\begin{prop}\label{prop64}
Except for finite number of $q^{-s}$, up to a scaler there is at most one linear functional $A$ on $\CW(\pi,\psi_U)$ such that
$$A(\pi(n)W)=\psi_N(n)A(W)$$
and 
$$A(\pi(\bt(a))W)=\eta^{-1}(a)|a|_E^{-s+3/2}.$$
\end{prop}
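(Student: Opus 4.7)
The plan is to reduce this uniqueness assertion to a Bruhat-theoretic count, in the spirit of Gelfand--Kazhdan. Let $H = N \cdot T'$, where $T' = \{\bt(a) : a \in E^\times\}$, and let $\chi_s$ be the character of $H$ defined by $\chi_s(n \bt(a)) = \psi_N(n)\eta^{-1}(a)|a|_E^{3/2-s}$ (this is the character the hypothetical $A$ is required to transform by, cf.\ Lemma \ref{lem63}). Since the Whittaker model gives an embedding $\CW(\pi,\psi_U) \hookrightarrow \Ind_U^G(\psi_U)$ of $G$-modules, it suffices to show that
$$\dim \Hom_H(\Ind_U^G(\psi_U), \chi_s) \le 1$$
for $q_E^{-s}$ outside a finite set; restriction to the submodule $\CW(\pi,\psi_U)$ then yields the desired bound for $\pi$.

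To carry this out, decompose $G$ into $(U, H)$-double cosets. Because $H$ is contained in the Borel subgroup $B$, the Bruhat decomposition $G = \bigsqcup_{w \in \bW} U \dot w B$ refines into a finite decomposition $G = \bigsqcup_i U g_i H$, with representatives $g_i = \dot w_i t_i$ indexed by $\bW$ together with a finite set of cosets in $T/T'$. The Mackey--Bernstein geometric lemma applied to $\Res_H \Ind_U^G(\psi_U)$ then produces a finite filtration whose successive subquotients have the form $\ind_{H \cap g_i^{-1} U g_i}^H(\psi_U^{g_i} \cdot \delta_i)$, where $\psi_U^{g_i}(h) := \psi_U(g_i h g_i^{-1})$ and $\delta_i$ is an explicit modular factor. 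By Frobenius reciprocity, each such subquotient contributes
$$\dim \Hom_{H \cap g_i^{-1} U g_i}(\psi_U^{g_i} \delta_i, \chi_s) \in \{0, 1\},$$
with the value $1$ occurring only when the character equation $\psi_U^{g_i} \delta_i \equiv \chi_s$ holds on $H \cap g_i^{-1} U g_i$.

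The orbit corresponding to $w_2 = s_\alpha s_\beta s_\alpha$ is the ``open'' cell: here the character condition is met, and the resulting one-dimensional functional is precisely the one realized by $\widetilde{\Psi}(1-s, \cdot, \eta^*)$ in \eqref{eq63} (which integrates exactly over the two root subgroups $U_{\alpha+\beta}, U_{2\alpha+\beta}$ that $w_2$ sends into the opposite unipotent, while fixing the one remaining root direction $U_\beta \subset H$). For every other orbit representative $g_i$, one must check that the character equation fails for generic $s$: one exhibits a root subgroup $U_\gamma \subset H \cap g_i^{-1} U g_i$ on which $\chi_s$ restricts nontrivially with $s$-dependent modulus while $\psi_U^{g_i}$ restricts to a fixed (possibly trivial) additive character, forcing an equation in $q_E^{-s}$ with only finitely many solutions. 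The main obstacle is this orbit-by-orbit verification; but because $\bW$ has only $8$ elements and the root system of $\RU(2,2)$ has only $4$ positive roots, the analysis reduces to a short, explicit case-check using the Chevalley commutation relations already exploited in the proof of Proposition \ref{prop25}.
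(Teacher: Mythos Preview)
Your argument has two genuine gaps. First, the logical direction of the reduction is wrong: the Whittaker model realizes $\pi$ as a \emph{submodule} of $\Ind_U^G(\psi_U)$, and a bound $\dim\Hom_H(\Ind_U^G(\psi_U),\chi_s)\le 1$ does not imply the same bound on a submodule (restriction of functionals need not be surjective). Second, and more seriously, the set of $(U,H)$-double cosets is \emph{not} finite. With $H=N\cdot T'$ and $T'=\{\bt(a):a\in E^\times\}$ you have $T/T'\cong E^\times$, which is infinite; the dimension count $\dim_F U+\dim_F H=6+6=12<16=\dim_F G$ shows that generic double cosets vary in a $4$-parameter family. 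So there is no Mackey--Bernstein filtration with finitely many pieces to run your case-check on, and the ``finitely many exceptional $q_E^{-s}$'' cannot come from a finite orbit list.

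The paper's proof circumvents this by working with the admissible representation $\pi$ itself rather than the huge space $\Ind_U^G(\psi_U)$. One first passes to the twisted Jacquet module $\pi_{N,\psi_N}$, a smooth module for the mirabolic $P_2^{(2)}\subset\GL_2(E)\cong M$, and uses the Bernstein--Zelevinsky sequence
\[
0\;\longrightarrow\;\ind_{N^{(2)}}^{P_2^{(2)}}(\psi_E)\;\longrightarrow\;\pi_{N,\psi_N}\;\longrightarrow\;(\pi_{N,\psi_N})_{N^{(2)}}\;\longrightarrow\;0,
\]
together with uniqueness of Whittaker functionals (so that $\pi_{U,\psi_U}$ is one-dimensional). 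The crucial input is a Kazhdan-type result (Proposition~\ref{prop65}): the quotient $(\pi_{N,\psi_N})_{N^{(2)}}$, which is the Jacquet module of $\pi$ with respect to a \emph{degenerate} character of $U$, is finite-dimensional. This finiteness genuinely uses the admissibility of $\pi$ and would fail for $\Ind_U^G(\psi_U)$; it is precisely what produces the finitely many bad values of $q_E^{-s}$. The one-dimensional contribution then comes from the sub, via the Jacquet--Langlands computation of $\Hom_{T^{(2)}}(\ind_{N^{(2)}}^{P_2^{(2)}}(\psi_E),\,\eta^{-1}|~|^{-s+3/2})$, which is the correct incarnation of your ``$w_2$-cell'' functional.
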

\begin{proof}
The first condition says that $A\in \Hom_\BC(\pi_{N.\psi_N},\BC)$. The twisted Jacquet module $\pi_{N,\psi_N}$ defines a representation of the mirabolic subgroup $P_2^{(2)}\subset \GL_2\cong M$, where 
$$P_2^{(2)}=\wpair{\begin{pmatrix}a& x\\ &1 \end{pmatrix}}\subset \GL_2(E)$$
 The second condition says that 
$$A\in \Hom_{T^{(2)}}(\pi_{N,\psi_N},\eta^{-1}|~|^{-s+3/2}),$$
where $T^{(2)}=\wpair{\diag(a,1)}\subset \GL_2(E)$. As before, let $N^{(2)}$ be the unipotent subgroup of $P_2^{(2)}$. As a representation of $P_2^{(2)}$, we have the exact sequence
$$0\ra \ind_{N^{(2)}}^{P_2^{(2)}}((\pi_{N,\psi_N})_{N^{(2)},\psi_E})\ra \pi_{N,\psi_N}\ra (\pi_{N,\psi_N})_{N^{(2)}}\ra 0.$$
Note that $ (\pi_{N,\psi_N})_{N_2,\psi_E}\cong \pi_{U,\psi_U}$ has dimension 1 by the uniqueness of Whitaker functionals, and $ (\pi_{N,\psi_N})_{N_2}$ has finite dimension by the following proposition, Proposition \ref{prop65}. The above sequence is equivalent to
\begin{equation}\label{eq64}0\ra \ind_{N^{(2)}}^{P_2^{(2)}}(\psi_E)\ra  \pi_{N,\psi_N}\ra (\pi_{N,\psi_N})_{N^{(2)}}\ra 0.\end{equation}
In the proof of the local functional equation for local zeta integral of $\GL_2(E)$ in \cite{JL}, Jacquet-Langlands showed that 
$$\Hom_{T^{(2)}}(\ind_{N^{(2)}}^{P_2^{(2)}}(\psi),\eta^{-1}|~|^{-s+3/2})$$
has dimension 1. Since $  (\pi_{N,\psi_N})_{N^{(2)}}$ has finite dimension, after excluding finite number of $q_E^{-s}$, we have $\dim \Hom_{T_2}(\pi_{N,\psi_N},\eta^{-1}|~|^{-s+3/2})\le 1$ by the exact sequence (\ref{eq64}).
\end{proof}
\begin{prop}[Kazhdan]\label{prop65}
Let $(\pi,V)$ be an irreducible smooth representation of $G$, and $\theta$ be the character on $U$ defined by 
$$\theta\left(\begin{pmatrix}1&x&&\\ &1&&\\ &&1&\\ &&-\bar x&1 \end{pmatrix}\begin{pmatrix}1&&b_{11}&b_{12}\\ &1& \bar b_{12}& b_{22}\\ &&1&\\ &&&1 \end{pmatrix}\right)=\psi(b_{22}).$$
Then the twisted Jacquet module $V_{U,\theta}$ has finite dimension.
\end{prop}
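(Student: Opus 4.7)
The plan is to reduce the problem to the uniqueness of Whittaker models on a smaller rank-one quasi-split group. Write $N_Q := U_\alpha U_{\alpha+\beta} U_{2\alpha+\beta}$. Inspection of the positive roots of $\RU_{E/F}(2,2)$ listed in $\S 1$ shows that the complement $\{\alpha,\alpha+\beta,2\alpha+\beta\}$ is the set of roots outside the Levi attached to the simple root $\beta$, so $N_Q$ is precisely the unipotent radical of the standard (Klingen-type) parabolic $Q = L N_Q$, whose Levi may be identified as
$$L \;\cong\; \Res_{E/F}(\GL_1) \times \RU_{E/F}(1,1),$$
with the root subgroups $U_{\pm\beta}$ lying inside the $\RU_{E/F}(1,1)$-factor. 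The character $\theta$ is trivial on $N_Q$, and $U/N_Q \cong U_\beta \cong F$, on which $\theta$ restricts to $\psi$. Consequently one has the standard identification of iterated Jacquet functors
$$V_{U,\theta} \;=\; \bigl(V_{N_Q}\bigr)_{U_\beta,\,\psi}.$$

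Next, by Jacquet's theorem in the form refined by Casselman, the Jacquet module $V_{N_Q}$ is a smooth admissible representation of $L$ of finite length. Inside $L$, the subgroup $U_\beta$ is the maximal unipotent of a Borel of the $\RU_{E/F}(1,1)$-factor, and $\psi$ is nontrivial, hence generic for that rank-one factor. Since $\RU_{E/F}(1,1)$ is quasi-split of rank one, uniqueness of Whittaker models guarantees that every irreducible smooth representation of this factor carries at most a one-dimensional space of $\psi$-Whittaker functionals; and since $\Res_{E/F}(\GL_1)$ is abelian with one-dimensional irreducibles that do not interact with $U_\beta$, each irreducible constituent $\sigma$ of $V_{N_Q}$ satisfies $\dim \sigma_{U_\beta,\psi}\le 1$. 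Combined with the finite length of $V_{N_Q}$, this immediately yields $\dim V_{U,\theta} < \infty$.

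The main technical point, and the only nontrivial obstacle, is verifying cleanly that $N_Q$ coincides with the unipotent radical of the Klingen-type parabolic and that its Levi decomposes as $\Res_{E/F}(\GL_1) \times \RU_{E/F}(1,1)$. Once these structural facts are extracted from the explicit root data of $\RU_{E/F}(2,2)$, the remainder of the argument is an application of two standard black boxes: admissibility and finite length of parabolic restriction (Jacquet--Casselman), and uniqueness of Whittaker functionals on a rank-one quasi-split unitary group.
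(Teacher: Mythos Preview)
Your argument is correct. The paper does not actually write out a proof: it only remarks that in the $\GL_n$ case this is Kazhdan's theorem (Bernstein--Zelevinsky, Theorem~5.21) and that ``the proof in our case is similar and we omit the details''. Your route --- factoring $V_{U,\theta}$ as $(V_{N_Q})_{U_\beta,\psi}$ via the Klingen-type parabolic $Q=LN_Q$, invoking Casselman's theorem to get that $V_{N_Q}$ is admissible of finite length over $L\cong\Res_{E/F}(\GL_1)\times\RU_{E/F}(1,1)$, and then bounding the $(U_\beta,\psi)$-coinvariants of each irreducible subquotient by uniqueness of Whittaker functionals on the rank-one factor --- is precisely the standard reduction by which such statements are established, and is the natural analogue of the $\GL_n$ argument the author invokes. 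The structural checks you flag (that $N_Q=U_\alpha U_{\alpha+\beta}U_{2\alpha+\beta}$ is the unipotent radical of $Q$, that $\theta|_{N_Q}$ is trivial while $\theta|_{U_\beta}=\psi$, and the identification of the Levi) are all correct and follow directly from the explicit root data and the embedding of $G_1=\RU(1,1)$ into $G$ given in \S1 of the paper.
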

In the $\GL_n$ case, this is a Theorem of Kazhdan, \cite{BZ} Theorem 5.21. The proof in our case is similar and we omit the details.

\begin{cor}\label{cor66}
There is a meromorphic function $\gamma'(s,\pi,\eta)$ such that
$$\widetilde \Psi(1-s,W,\eta^{*})=\gamma'(s,\pi\times\eta,\psi_U)\Psi(s,W,\eta), \forall W\in \CW(\pi,\psi_U).$$
\end{cor}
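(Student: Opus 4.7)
The plan is to apply the uniqueness result of Proposition \ref{prop64} to the two linear functionals
\[
A(W) := \Psi(s,W,\eta), \qquad B(W) := \widetilde\Psi(1-s,W,\eta^{*})
\]
on $\CW(\pi,\psi_U)$. Lemma \ref{lem63} already establishes that $B$ transforms under $N$ by $\psi_N$ and under the subtorus $\{\bt(a):a\in E^\times\}$ by $\eta^{-1}|\cdot|_E^{3/2-s}$. Therefore the main point is to verify that the same identities hold for $A$; once this is done, Proposition \ref{prop64} forces $A$ and $B$ to be proportional (for all but finitely many $q_E^{-s}$), and the resulting ratio is the desired $\gamma'(s,\pi\times\eta,\psi_U)$.

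First I would check the $N$-equivariance of $A$. For $n=\bn(b)\in N$ with $b=\begin{pmatrix}z&y\\ \bar y&t\end{pmatrix}\in \Herm_2(F)$, a direct matrix computation shows
\[
\bt(a)\bn(b)\bt(a)^{-1} = \bn\begin{pmatrix}a\bar a\,z & ay\\ \bar a\bar y & t\end{pmatrix},
\]
so $\psi_U(\bt(a)n\bt(a)^{-1})=\psi(t)=\psi_N(n)$ is independent of $a\in E^\times$. Hence
\[
A(\pi(n)W) = \int_{E^\times} W(\bt(a)n)\,\eta(a)|a|_E^{s-3/2}d^\times a = \psi_N(n) A(W).
\]
Next, the $\bt(b)$-equivariance follows from the change of variables $a\mapsto ab^{-1}$ in the definition of $A$, using the translation invariance of $d^\times a$: one obtains
\[
A(\pi(\bt(b))W) = \int_{E^\times} W(\bt(ab))\,\eta(a)|a|_E^{s-3/2}d^\times a = \eta^{-1}(b)|b|_E^{3/2-s} A(W).
\]

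With both transformation properties verified, $A$ and $B$ lie in the same one-dimensional (by Proposition \ref{prop64}) space of Whittaker-type functionals on $\CW(\pi,\psi_U)$, except possibly at a finite set of values of $q_E^{-s}$. Thus there is a scalar $\gamma'(s,\pi\times\eta,\psi_U)$, a priori defined for generic $s$, such that $B(W)=\gamma'(s,\pi\times\eta,\psi_U)A(W)$ for all $W$. Choosing $W$ so that $A(W)$ is a nonzero polynomial in $q_E^{\pm s}$ (which is possible since $A$ is not identically zero on $\CW(\pi,\psi_U)$), and using that both $A(W)$ and $B(W)$ are rational in $q_E^{-s}$, we see that $\gamma'(s,\pi\times\eta,\psi_U)$ extends to a meromorphic function of $s$, and the functional equation $B=\gamma' A$ then holds for all $W\in\CW(\pi,\psi_U)$ by linearity.

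The main obstacle is really Proposition \ref{prop64}, which has already been handled; for the corollary itself, the only subtlety is the bookkeeping of the transformation character of $A$ under conjugation by $\bt(a)$ and the verification that $A$ is not identically zero (so that $\gamma'$ is well defined and not just the trivial identity $0=0$). The latter is easy because $\Psi(s,W,\eta)$, being an integral of a generic matrix coefficient along a maximal torus direction, is a standard Hecke-type zeta integral that is nonzero for a suitable choice of Whittaker function.
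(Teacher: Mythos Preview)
Your proposal is correct and follows essentially the same approach as the paper's proof: verify that the functional $A(W)=\Psi(s,W,\eta)$ satisfies the same $N$- and $\bt(a)$-equivariance as $B$ does by Lemma \ref{lem63}, and then invoke Proposition \ref{prop64}. The paper simply asserts these transformation properties of $A$ as ``clear,'' whereas you spell out the conjugation $\bt(a)\bn(b)\bt(a)^{-1}$ and the change of variables; your additional remarks on meromorphic continuation and nonvanishing of $A$ are reasonable elaborations that the paper leaves implicit.
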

\begin{proof}
Consider the linear functional $W\mapsto A(W)=\Psi(s,W,\eta)$ on $\CW(\pi,\psi_U)$. It is clear that 
$$A(\pi(n)W)=\psi_N(n)A(W), n\in N,$$
and 
$$A(\pi(\bt(a))W)=\eta^{-1}(a)|a|^{3/2-s}A(W).$$
Now assertion follows from Lemma \ref{lem63} and Proposition \ref{prop64}.
\end{proof}

\subsection{Proof of the local converse theorem using the new $\gamma$-factor}
Now we are going to prove the local converse theorem using the new gamma factor $\gamma'(s,\pi\times\eta,\psi)$. We recall our notations in $\S$2. We assume $E/F$ is unramified and we are given two $\psi_U$-generic representation $\pi, \pi'$ of $G=\RU(2,2)(F)$ with the same central character. For $v\in V_\pi$, (resp $ v'\in V_{\pi'}$) with $W_v(1)$ (resp. $W_{v'}(1)=1$), we have defined Howe vectors $v_m$ (resp. $v'_m$) for positive integers $m$. Let $C$ be an integer such that $v$ and $v'$ are both fixed by $K_C$. 
\begin{prop}{\label{thm67}}
 If $\gamma'(s,\pi\times\eta,\psi_U)=\gamma'(s,\pi'\times\eta,\psi_U)$ for all quasi-characters $\eta$ of $F^\times$, then 
$$W_{v_m}(tw)=W_{v_m'}(tw)$$
for $t\in T$, and all $m\ge 4^6C$. 
\end{prop}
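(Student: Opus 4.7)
The plan is to mirror the strategy of Proposition \ref{prop41}, with the new Hecke-type factor $\gamma'$ in place of the Shimura-type $\gamma$. First set $k=4^6 C$ and let $w=w_2$. By Corollary \ref{cor24}, $W_{v_k}(b)=W_{v_k'}(b)$ for every $b\in B$; in particular
$$\Psi(s,W_{v_k},\eta)=\Psi(s,W_{v_k'},\eta)$$
for every quasi-character $\eta$ of $E^\times$. Combined with the local functional equation from Corollary \ref{cor66} and the hypothesis on the new gamma factors, this yields
$$\widetilde\Psi(1-s,W_{v_k},\eta^{*})=\widetilde\Psi(1-s,W_{v_k'},\eta^{*})$$
for every such $\eta$.

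The crux of the argument is to express $\widetilde\Psi$ evaluated on a Howe vector as a Mellin transform in $a$ of $W_{v_k}(\bt(a)\dot w_2)$. I would examine the difference
$$\widetilde\Psi(1-s,W_{v_k},\eta^{*})-\widetilde\Psi(1-s,W_{v_k'},\eta^{*})=\int_{E^\times}\left(\int_{E\times F}\Delta(a,x,y)\,dx\,dy\right)\eta^{*}(a)|a|^{-s-1/2}da,$$
where $\Delta(a,x,y)=(W_{v_k}-W_{v_k'})(\bt(a)\dot w_2\bx_{\alpha+\beta}(x)\bx_{2\alpha+\beta}(y))$. By Corollary \ref{cor26} applied to $w_2$, the function $\Delta(a,\cdot,\cdot)$ vanishes whenever $\bx_{\alpha+\beta}(x)\bx_{2\alpha+\beta}(y)\notin U_{w_2,k}^{-}$, which from the matrix description of $H_k$ translates to $x\notin\CP_E^{-3k}$ or $y\notin\CP_F^{-5k}$. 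On the complementary region $\CP_E^{-3k}\times\CP_F^{-5k}$, the element $\bx_{\alpha+\beta}(x)\bx_{2\alpha+\beta}(y)$ already lies in $H_k$; and since the $(1,2)$ and $(2,4)$ matrix entries of this element vanish identically, a direct inspection of the formula for $\tau_k$ gives $\psi_k(\bx_{\alpha+\beta}(x)\bx_{2\alpha+\beta}(y))=1$. Equation (\ref{eq21}) then collapses the inner integral to $\vol(\CP_E^{-3k})\vol(\CP_F^{-5k})$ times $(W_{v_k}-W_{v_k'})(\bt(a)\dot w_2)$.

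Combining both steps yields
$$0=\int_{E^\times}\bigl(W_{v_k}(\bt(a)\dot w_2)-W_{v_k'}(\bt(a)\dot w_2)\bigr)\eta^{*}(a)|a|^{-s-1/2}\,da$$
for every quasi-character $\eta$, and inverse Mellin transform forces $W_{v_k}(\bt(a)\dot w_2)=W_{v_k'}(\bt(a)\dot w_2)$ for every $a\in E^\times$. The passage from $\bt(a)$ to a general $t=\bt(a_1,a_2)\in T$ and from $k$ to all $m\ge 4^6C$ is then verbatim as in the end of the proof of Proposition \ref{prop41}: use $t\dot w_2\bx_\beta(r)=\bx_\beta(\beta(t)r)t\dot w_2$ to force $a_2\bar a_2\in 1+\CP_F^k$ when the Whittaker value is nonzero, split $a_2=eb_1$ with $e\in E^1$ and $b_1\in 1+\CP_E^k$, invoke equality of central characters, and finally apply Lemma \ref{lemma21}(3) together with Proposition \ref{prop25}.

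The main obstacle is the inner integration step. One must check that the stability in Corollary \ref{cor26} is sharp enough to annihilate $\Delta$ off the box $\CP_E^{-3k}\times\CP_F^{-5k}$, and then verify by explicit calculation with $d_k$ and $\tau_k$ that $\psi_k$ is trivial on the abelian subgroup $U_{\alpha+\beta}U_{2\alpha+\beta}\cap H_k$. Once this collapse to a clean Mellin integral on $E^\times$ is established, the rest is formal and parallels the Shimura-type argument.
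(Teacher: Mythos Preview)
Your proposal is correct and follows essentially the same approach as the paper's proof: both split the $(x,y)$-integral in $\widetilde\Psi$ according to whether $\bx_{\alpha+\beta}(x)\bx_{2\alpha+\beta}(y)$ lies in $H_k$, kill the off-box contribution via Corollary~\ref{cor26}(2), collapse the on-box contribution using Eq.~(\ref{eq21}) and the triviality of $\psi_U$ on the non-simple root subgroups, and then invert the resulting Mellin transform before finishing exactly as in Proposition~\ref{prop41}. The only cosmetic difference is that the paper first computes $\Psi(s,W_{v_m},\eta)$ explicitly and then invokes the functional equation, whereas you deduce equality of the $\widetilde\Psi$'s directly from equality of the $\Psi$'s and of the $\gamma'$-factors; logically these are equivalent.
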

\begin{proof}
Let $m\ge 4^6C$. By Corollary \ref{cor23}, we have $W_{v_m}(\bt(a))=0$ for $a\notin 1+\CP_E^m$. By Lemma \ref{lemma21}, we have $W_{v_m}(\bt(a))=1$ for $a\in 1+\CP_E^m$. Thus we have
 \begin{align*}\Psi(s,W_{v_m},\eta)&=\int_{E^\times}W_{v_m}(t(a))\eta(a)|a|_E^{s-3/2}da\\
&=\int_{1+\CP_E^m}\eta(a)da.
\end{align*}
The same calculation works for $W_{v_m'}$. Thus we have
\begin{equation}\label{eq65} \Psi(s,W_{v_m},\chi)=\Psi(s,W_{v_m'},\chi). \end{equation}

Let $W=W_{v_m}$ or $W_{v_m'}$. We have
\begin{align}
&\quad\widetilde \Psi(1-s, W,\eta^{*}) \nonumber\\
&=\int_{E^\times}\int_{E\times F}W(\bt( a)w \bx_{\alpha+\beta}(x)\bx_{2\alpha+\beta}(y))dxdy \eta^{*}(a)|a|^{-s-1/2}da \nonumber \\
&=\int_{E^\times}\int_{x\in \CP_E^{-3m},y\in \CP_F^{-5m}}W(\bt( a)\dot w_2 \bx_{\alpha+\beta}(x)\bx_{2\alpha+\beta}(y))dxdy \eta^{*}(a)|a|^{-s-1/2}da \label{eq66} \\
&+\int_{E^\times}\int_{x\notin \CP_E^{-3m}, \textrm{ or }y\notin \CP_F^{-5m}}W(\bt( a) \dot w_2\bx_{\alpha+\beta}(x)\bx_{2\alpha+\beta}(y))dxdy \eta^{*}(a)|a|^{-s-1/2}da.\label{eq67}
\end{align}
 For $x\in \CP_E^{-3m}, y\in \CP_F^{-5m}$, we have $\bx_{\alpha+\beta}(x)\bx_{2\alpha+\beta}(y) \in H_m$, and thus 
$$W(\bt(a)\dot w_2 \bx_{\alpha+\beta}(x)\bx_{2\alpha+\beta}(y))=W(\bt(a)\dot w_2),$$
by Lemma \ref{lemma21} or Eq.(\ref{eq21}) .  Then the term Eq.(\ref{eq66}) becomes 
$$\vol(\CP_E^{-3m})\vol(\CP_F^{-5m})\int_{E^\times} W(\bt( a)w)\eta^{*}(a)|a|^{-s-1/2}da.$$
By Corollary \ref{cor26}, we have $W_{v_m}(\bt( a)w \bx_{\alpha+\beta}(x)\bx_{2\alpha+\beta}(y) )=W_{v_m'}(\bt(a)w\bx_{\alpha+\beta}(x)\bx_{2\alpha+\beta}(y))$ for $x\notin \CP_E^{-3m}$ or $y\notin \CP_F^{-5m}$. Thus the term Eq.(\ref{eq67}) for $W=W_{v_m}$ and for $W=W_{v_m'}$ are the same. 
Thus
\begin{align}
&\widetilde \Psi(1-s, W_{v_m}, \eta^{*})-\tilde \Psi(1-s, W_{v_m'}, \eta^{*})\label{eq68}\\
=& \vol(\CP_E^{-3m})\vol(\CP_F^{-5m})\int_{E^\times}( W_{v_m}(\bt( a)\dot w)-W_{v_m'}(\bt( a)\dot w))\eta^{*}(a)|a|^{-s-1/2}da. \nonumber
\end{align}
By Eq.(\ref{eq65}), Eq.(\ref{eq68}) and the local functional equation, i.e., Corollary \ref{cor66}, we have
\begin{align}
&d_m(\gamma'(s,\pi\times \eta,\psi_U)-\gamma'(s,\pi'\times\eta,\psi_U))\label{eq69}\\
=&\int_{E^\times}( W_{v_m}(\bt( a)w)-W_{v_m'}(\bt( a)w))\eta^{*}(a)|a|^{-s-1/2}da, \nonumber
\end{align}
where $d_m=\Psi(s,W_{v_m}, \eta)$. Thus by the assumption, we get
$$\int_{E^\times}( W_{v_m}(\bt( a)\dot w_2)-W_{v_m'}(\bt( a)\dot w_2))\eta^{*}(a)|a|^{-s-1/2}da\equiv 0,$$
which implies that $W_{v_m}(\bt(a)\dot w_2)=W_{v_m'}(\bt(a)\dot w_2)$. The following proof is the same as the proof of Proposition \ref{prop41}.
\end{proof}
From the proof of Theorem \ref{thm56}, we see that it can be restated as
\begin{thm}[Local Converse Theorem for generic representations of unramified $\RU(2,2)$]
Suppose that $E/F$ is unramified. Let $\pi,\pi'$ be two $\psi_U$-generic irreducible representations of $G$ with the same central character. Suppose that
$$\gamma'(s,\pi\times\eta,\psi_U)=\gamma'(s,\pi'\times\eta,\psi_U), \gamma(s,\pi\times \tau,\psi_U)=\gamma(s,\pi\times\tau,\psi_U) $$
for all quasi-characters $\eta$ of $E^\times$ and all irreducible representations $\tau$ of $\GL_2(E)$,
then $\pi\cong \pi'$.
\end{thm}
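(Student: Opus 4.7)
The plan is to reproduce the proof of Theorem \ref{thm56} with the only modification occurring in the character-twist step: Proposition \ref{thm67} will play the role of Proposition \ref{prop41}. Since the $\gamma$-factor hypothesis for $\GL_2(E)$-twists is unchanged, and since Propositions \ref{prop42} and \ref{prop55} use the character-twist hypothesis only through the conclusion $W_{v_m}(t\dot w_2)=W_{v_m'}(t\dot w_2)$ (rather than through the $\gamma$-factor itself), the rest of the argument carries over verbatim.

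More precisely, I would proceed in the following order. First, apply Proposition \ref{thm67} to the hypothesis $\gamma'(s,\pi\times\eta,\psi_U)=\gamma'(s,\pi'\times\eta,\psi_U)$ for all quasi-characters $\eta$ of $E^\times$ in order to deduce
\[
W_{v_m}(t\dot w_2)=W_{v_m'}(t\dot w_2),\qquad t\in T,\ m\ge 4^6 C.
\]
Next, invoke the proof (not the statement) of Proposition \ref{prop42}: the argument there depends on the hypothesis $\gamma(s,\pi\times\mu\eta,\psi)=\gamma(s,\pi'\times\mu\eta,\psi)$ only through the conclusion of Proposition \ref{prop41}, which we have just re-established by other means. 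Consequently, we obtain
\[
W_{v_m}(\bm(a)\dot w_1 n)=W_{v_m'}(\bm(a)\dot w_1 n),\qquad a\in\GL_2(E),\ n\in N-N_m,\ m\ge 4^9 C.
\]

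With these two equalities in hand, the proof of Proposition \ref{prop55} can be run verbatim: the Jacquet--Shalika uniqueness (Proposition \ref{prop53}) combined with the $\GL_2(E)$-twist hypothesis produces
\[
W_{v_m}(t\dot w)=W_{v_m'}(t\dot w),\qquad t\in T,\ w\in\{w_1,w_0\},\ m\ge 4^9 C.
\]
Combining this with Corollary \ref{cor26} and Lemma \ref{lemma22}(2), every double coset $BwB$ in the Bruhat decomposition has been handled on Howe vectors: for $w=1,s_\alpha,s_\beta,s_\alpha s_\beta,s_\beta s_\alpha$ by Corollary \ref{cor24} and Corollary \ref{cor26}, and for $w=w_1,w_2,w_0$ by the combination of Propositions \ref{prop25}, \ref{thm67}, \ref{prop42}, and \ref{prop55}. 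A final application of Proposition \ref{prop25}(2) yields $W_{v_m}(g)=W_{v_m'}(g)$ for all $g\in G$ and $m$ sufficiently large, and the uniqueness of the Whittaker functional then gives $\pi\cong\pi'$.

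I do not anticipate any genuinely new obstacle, since all the hard analytic work (the stability of partial Bessel functions in Proposition \ref{prop25}, the Howe vector machinery, and the use of Jacquet--Shalika) has already been done in Sections 2--5. The only genuinely new input is Proposition \ref{thm67}, whose proof has already been outlined and relies on the local functional equation of Corollary \ref{cor66}; the observation that makes this substitution work is that the Howe vector $W_{v_m}$ restricted to $T$ is essentially the characteristic function of $1+\CP_E^m$, so the Hecke-type integral $\Psi(s,W_{v_m},\eta)$ is computable (and equal for $\pi$ and $\pi'$), allowing Mellin inversion to extract the desired pointwise equality on $\bt(a)\dot w_2$. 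If $\psi_U$ fails to be unramified, a mild twist of the Howe vector construction absorbs the conductor of $\psi_U$ into the definition of $d_m$, exactly as in the final sentence of the proof of Theorem \ref{thm56}.
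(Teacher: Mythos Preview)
Your proposal is correct and follows exactly the approach the paper takes: the paper itself gives no separate proof for this theorem, merely observing that the proof of Theorem \ref{thm56} carries over once Proposition \ref{thm67} replaces Proposition \ref{prop41} in supplying the equality $W_{v_m}(t\dot w_2)=W_{v_m'}(t\dot w_2)$. Your detailed unpacking of how Propositions \ref{prop42} and \ref{prop55} depend on the character-twist hypothesis only through this conclusion is accurate and is precisely what the paper's one-line remark is implicitly relying on.
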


\section{Concluding remarks}
\subsection{When $E/F$ is ramified}
In this subsection, we give a brief account on how to modify the method we used in previous sections for unramified $E/F$ so that it adapts when $E/F$ is ramified and the residue characteristic of $F$ is odd.

We assume that the field extension $E/F$ is ramified. Let $p$ be the characteristic of the residue field of $F$. Let $\CP_E$ (resp. $\CP_F$) be the maximal ideal of $\CO_E$ (resp. $\CO_F$), and let $p_E$ (resp. $p_F$) be a prime element of $E$ (resp, $F$). Then we have $\CP_F\cdot \CO_E= \CP_E^2$ and $p_F=p_E^2 u$ for some $u\in \CO_E^\times$.

Let $\fD_{E/F}=\CP_E^d$ be the different of $E/F$ for some positive integer $d$.

Let $x\in \CO_E$ be an element such that $\CO_E=\CO_F[x]$. Let $v_E$ be the discrete valuation of $E$. Define $i_{E/F}=v_E(\bar x-x)$. The integer $i_{E/F}$ is independent of choice of $E/F$. It is known that $i_{E/F}=1$ if $p\ne 2$, and $i_{E/F}\ge 2$ if $p=2$, see Chapter IV of \cite{Se}.

Let $\psi_U$ be the unramified generic character of $U$ as in $\S$2. In the ramified case, we need to change the definition of Howe vectors a little bit. The reason is that $\CP_E\cap F\ne \CP_F$. To remedy this, we basically need to replace $\CP_E$ in $\S$3 by $\CP_F\cdot \CO_E=\CP_E^2$. We give some details on this. We define $J_m=d_m K_{2m}d_m^{-1}$, where $d_m=t(p_F^{-3m}, p_F^{-m})$ and $K_{2m}=(1+\textrm{Mat}_{4\times 4}(\CP_E^{2m}))$ are defined in $\S$3. Define the character $\psi_m$ on $J_m$ in the same way as in $\S$3. We still have $\psi_m|_{U_m}=\psi_U|_{U_m}$. 

Let $(\pi,V_\pi)$ be a $\psi_U$-generic representation of $G=\RU(2,2)$. Starting from a vector $v\in V_\pi$, we define the Howe vectors $v_m$ in the same way. Let $C$ be an integer such that $v$ is fixed by $K_{2C}$, then Lemma \ref{lemma21} still holds. Part (1) of Lemma \ref{lemma22} should be modified to 
\begin{lem}{\label{lem71}}
Let $t=\bt(a_1,a_2)\in T$. If $W_{v_m}(t)\ne 0$, then $a_1/a_2\in 1+\CP_E^{2m}$ and $a_2\bar a_2\in 1+\CP_F^m$.
\end{lem}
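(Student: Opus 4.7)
The plan is to adapt the argument of Lemma \ref{lemma22}(1) to the modified Howe-vector setup $J_m = d_m K_{2m} d_m^{-1}$. The key quantitative observation is that the congruence exponent doubles from $m$ to $2m$ in $K_{2m}$, but this doubling is washed out for $F$-valued root coordinates by the identity $\CP_E^{2m}\cap F = \CP_F^m$; for $E$-valued coordinates it survives. This is precisely why the two assertions of the lemma carry the asymmetric exponents $2m$ and $m$.

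Concretely, I would first determine the root-subgroup slices of $U_m = U\cap J_m$. From $\alpha(d_m) = \beta(d_m) = p_F^{-2m}$ and $p_F\CO_E = \CP_E^2$, conjugation by $d_m$ yields
\[ \bx_\alpha(x) \in U_m \iff x \in \CP_E^{-2m}, \qquad \bx_\beta(r) \in U_m \iff r \in \CP_F^{-m}. \]
A short check verifies that $\psi_m$ restricted to each slice equals $\psi_U$, exactly as in the unramified case.

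Second, assuming $W_{v_m}(t) \neq 0$ for $t = \bt(a_1,a_2)$, I would compute $W_{v_m}(t\bx_\alpha(x))$ two ways: once via Eq.~(\ref{eq21}) applied on the right, and once via the commutation $t\bx_\alpha(x) = \bx_\alpha(\alpha(t)x)t$ followed by Eq.~(\ref{eq21}) applied on the left. Equating the two forces $\psi_E((1-\alpha(t))x) = 1$ for every $x \in \CP_E^{-2m}$, so $1-\alpha(t) \in \CP_E^{2m}$ provided $\psi_E$ has conductor $\CO_E$. The parallel argument with $\bx_\beta(r)$ and the unramified $\psi$ on $F$ gives $1 - a_2\bar a_2 \in \CP_F^m$.

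The only non-routine ingredient, and the main obstacle, is the normalization of $\psi_E$: to have $\psi_E|_F = 1$ and conductor $\CO_E$ simultaneously in the ramified case one cannot take $\psi_E = \psi\circ\tr_{E/F}$ naively, since $\fD_{E/F}^{-1} = \CP_E^{-1}$ would shift the conductor. Instead I would set $\psi_E(x) = \psi(\tr_{E/F}(ex))$ for a pure-imaginary $e \in E$ of valuation $-1$, for instance $e = p_E^{-1}$ with a uniformizer satisfying $\bar p_E = -p_E$. Such an $e$ exists exactly because the residue characteristic of $F$ is odd, which guarantees $i_{E/F}=1$; this is the one place where the odd-residue hypothesis enters the argument. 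Once this normalization is fixed, the proof proceeds mechanically along the template of Lemma \ref{lemma22}(1).
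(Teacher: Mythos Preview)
Your proof is correct and follows exactly the approach the paper has in mind: the paper does not write out a proof of Lemma~\ref{lem71} at all, simply stating that ``Part (1) of Lemma~\ref{lemma22} should be modified to'' the present statement, so the intended argument is precisely the adaptation you carry out. Your computation of the simple-root slices $U_\alpha\cap J_m=\{\bx_\alpha(x):x\in\CP_E^{-2m}\}$ and $U_\beta\cap J_m=\{\bx_\beta(r):r\in\CP_F^{-m}\}$ from $J_m=d_mK_{2m}d_m^{-1}$ together with $\CP_E^{2m}\cap F=\CP_F^m$ is right, and the commutation step is identical to that of Lemma~\ref{lemma22}(1).

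One remark on your final paragraph: your discussion of how to normalize $\psi_E$ so that it is simultaneously unramified and trivial on $F$ is a legitimate setup issue, but it is not where the paper locates the odd-residue-characteristic hypothesis, and it is not needed for Lemma~\ref{lem71} itself. The paper simply assumes in \S7 (as in \S2) that such a $\psi_E$ has been fixed; the lemma then holds without any restriction on $p$. The paper's explicit use of $p\neq 2$ comes in the \emph{next} step, the analogue of Corollary~\ref{cor24}, where one needs $\Nm_{E/F}(1+\CP_E^{2m})=1+\CP_F^m$ to pass from $a_2\bar a_2\in 1+\CP_F^m$ to $a_2\in E^1(1+\CP_E^{2m})$; see the Remark following that lemma. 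So your identification of ``the one place where the odd-residue hypothesis enters'' is slightly off relative to the paper's own bookkeeping, though your observation about the construction of $\psi_E$ is a reasonable additional point.
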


Now assume that $(\pi,V_{\pi})$ and $(\pi',V_{\pi'})$ are two $\psi_U$-generic representations of $G=\RU_{E/F}(2,2)$ with the same central character as in $\S$2. We take $v\in V_\pi, v'\in V_{\pi'}$ and define Howe vectors $v_m, v_m'$. Let $C$ be an integer such that $v,v'$ are fixed by $K_{2C}$.
\begin{lem}
If the residue characteristic $p$ of $F$ is not $2$, then $W_{v_m}(t)=W_{v_m'}(t)$ for all $t\in T$.
\end{lem}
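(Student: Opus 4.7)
The plan is to mimic the proof of Corollary \ref{cor24} from the unramified case, with Lemma \ref{lem71} playing the role of Corollary \ref{cor23} and the surjectivity of the norm map on the appropriate higher unit groups replacing the easy surjectivity $\Nm(1+\CP_E^m)=1+\CP_F^m$ that was used in the unramified setting. The hypothesis $p\ne 2$ enters precisely at this norm step.

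First, by symmetry it suffices to prove that whenever $W_{v_m}(t)\ne 0$ for $t=\bt(a_1,a_2)\in T$, one has $W_{v_m}(t)=W_{v_m'}(t)$ (and then the vanishing case is automatic by running the same argument with the roles of $\pi,\pi'$ swapped). Assume $W_{v_m}(t)\ne 0$. By Lemma \ref{lem71}, $a_1/a_2\in 1+\CP_E^{2m}$ and $a_2\bar a_2\in 1+\CP_F^m$.

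Now I invoke the norm computation in the ramified case: for $\epsilon\in\CP_E^{2m}$, using $\CP_E^{2m}\cap F=\CP_F^m$, one sees $\Nm(1+\epsilon)=1+\tr(\epsilon)+\Nm(\epsilon)\in 1+\CP_F^m$, so $\Nm(1+\CP_E^{2m})\subset 1+\CP_F^m$. For the reverse inclusion, write $\epsilon=p_F^m y$ with $y\in\CO_E$ and note that modulo $\CP_F^{m+1}$, $\Nm(1+\epsilon)-1\equiv p_F^m\tr(y)$. Since $E/F$ is ramified, $\CO_E/\CP_E\cong\CO_F/\CP_F$ and, choosing a uniformizer $\pi_E$ with $\bar\pi_E=-\pi_E$ (possible for $p\ne 2$), the trace acts as multiplication by $2$ on residues, which is a bijection precisely because $p\ne 2$. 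A Hensel/Newton iteration on $m$ then shows $\Nm:1+\CP_E^{2m}\twoheadrightarrow 1+\CP_F^m$ is surjective. Consequently there exists $b\in 1+\CP_E^{2m}$ with $b\bar b=a_2\bar a_2$, so $z:=a_2/b\in E^1$.

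With this decomposition, write $t=\bt(z,z)\bt(a_1/z,b)$. Since $a_1/z=b\cdot(a_1/a_2)\in(1+\CP_E^{2m})(1+\CP_E^{2m})=1+\CP_E^{2m}$ and $b\in 1+\CP_E^{2m}$, the diagonal element $\bt(a_1/z,b)$ lies in $T\cap K_{2m}$, hence commutes with $d_m$ and so lies in $J_m=d_mK_{2m}d_m^{-1}$. Because $\psi_m$ is built from the $(1,2)$ and $(2,4)$ entries of $d_m^{-1}\cdot d_m=1$ (which vanish on a diagonal matrix), $\psi_m(\bt(a_1/z,b))=1$. The ramified analogue of Lemma \ref{lemma21}(2) (valid under our choice $C$ so that $v$ is fixed by $K_{2C}$) then yields
\[
W_{v_m}(t)=W_{v_m}(\bt(z,z)\bt(a_1/z,b))=\omega_\pi(z)\,\psi_m(\bt(a_1/z,b))\,W_{v_m}(1)=\omega_\pi(z).
\]
The identical computation for $\pi'$ gives $W_{v_m'}(t)=\omega_{\pi'}(z)$, and the hypothesis $\omega_\pi=\omega_{\pi'}$ finishes the proof.

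The main obstacle is the norm surjectivity; everything else is essentially formal manipulation paralleling Corollary \ref{cor24}. Once the standard fact $\Nm(1+\CP_E^{2m})=1+\CP_F^m$ (for $p\ne 2$ and ramified $E/F$) is in hand, the rest of the argument is mechanical. If one wished to handle $p=2$, this is exactly the step that would fail, since then the image of $\Nm$ on $1+\CP_E^{2m}$ lies in a strictly smaller open subgroup of $1+\CP_F^m$ and the reduction $a_2\in E^1\cdot(1+\CP_E^{2m})$ is no longer available.
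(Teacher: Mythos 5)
Your argument is correct and follows essentially the same route as the paper: use the constraint from the ramified analogue of Lemma~\ref{lemma22} (i.e.\ Lemma~\ref{lem71}), upgrade $a_2\bar a_2\in 1+\CP_F^m$ to $a_2\in E^1(1+\CP_E^{2m})$ via $\Nm_{E/F}(1+\CP_E^{2m})=1+\CP_F^m$ (which is where $p\neq 2$ enters), and then run the central-character decomposition exactly as in Corollary~\ref{cor24}. The only difference is cosmetic: you prove the norm surjectivity by hand via the trace-on-residues argument, whereas the paper simply cites Corollary~3 of Chapter~V, \S3 of \cite{Se}.
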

\begin{proof}
Let $t=\bt(a_1,a_2)\in T$ with $a_1,a_2\in E^\times$. If $W_{v_m}(t)\ne 0$, then $a_1/a_2\in 1+\CP_E^{2m}$ and $a_2\bar a_2\in 1+\CP_F^m$ by Lemma \ref{lem71}. If $p\ne 2$, we have $\Nm_{E/F}(1+\CP_E^{2m})=1+\CP_F^m$ by Corollary 3, $\S$3, Chapter V of \cite{Se}. From this, we get $a_2\in E^1(1+\CP_E^{2m})$. Notice that $\bt(a_1,a_2)\in H_m$ for $a_1,a_2\in 1+\CP_E^{2m}$, the following proof is the same as in the unramified case. 
\end{proof}
\begin{rem}\upshape
If $p=2$, then Corollary 3, $\S$3, Chapter V of \cite{Se} says that $\Nm_{E/F}(1+\CP_E^{2m-i_{E/F}+1})=1+\CP_E^F$ for $m\ge i_{E/F}$. Thus from $a_2\bar a_2\in 1+\CP_F^m$, we only get $a_2\in E^1(1+\CP_E^{2m-i_{E/F}+1})$ for $m\ge i_{E/F}$. Note that $T\cap H_m=\bt(1+\CP_E^{2m},1+\CP_E^{2m})$ and thus $\bt(a_1, a_2)\notin J_m$ for some $a_2\in 1+\CP_E^{2m-i_{E/F}+1}$ and $a_1$ with $a_1/a_2\in 1+\CP_E^{2m}$, since $i_{E/F}>1$ in the case $p=2$. Thus we cannot conclude that $W_{v_m}(\bt (a_1, a_2))=W_{v_m'}(\bt(a_1,a_2))$ from Lemma \ref{lemma21} (2) or Eq.(\ref{eq21}). This is the reason that we need to exclude the case $p=2$.
\end{rem}

 With slightly modification, one can check easily that the proof of the local converse theorem goes through. We omit the details.
 
 \subsection{Local converse theorem for $\Sp_4(F)$ and $\widetilde \Sp_4(F)$}
The gamma factors for generic irreducible smooth representations (resp. genuine generic irreducible smooth representations) of $\Sp_4(F)$ (resp. $\widetilde \Sp_4(F)$) are studied in \cite{Ka}. The local zeta integrals in these cases are defined in the same manner as the $\RU_{E/F}(2,2)$ case as we considered before. Thus, with similar argument, we can obtain the local converse theorem in these cases, i.e., we have
\begin{thm}
Let $F$ be a $p$-adic local field with odd residue characteristic. Let $\pi,\pi'$ be two $\psi_U$-generic irreducible smooth representations $($resp. genuine $\psi_U$-generic irreducible smooth representations of $)$ $\Sp_4(F)$ (resp. $\widetilde \Sp_4(F)$) with the same central character. If $$\gamma(s,\pi\times \eta,\psi)=\gamma(s,\pi'\times \eta,\psi), \textrm{ and } \gamma(s,\pi\times \tau, \psi)=\gamma(s,\pi'\times \tau,\psi),$$
for all quasi-characters $\eta$ of $F^\times$ and all irreducible smooth representations of $\GL_2(F)$, then $\pi\cong \pi'$.
\end{thm}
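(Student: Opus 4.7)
The plan is to mimic, almost verbatim, the argument developed in Sections~2--5 (and modified in Section~7.1) for $\RU_{E/F}(2,2)$, exploiting the fact that $\Sp_4$ and $\RU_{E/F}(2,2)$ share the same Weyl group structure (type $C_2$) and that in \cite{Ka} the Rankin--Selberg zeta integrals for $\Sp_{2n}\times\GL_m$ and $\widetilde{\Sp}_{2n}\times\GL_m$ are defined by the direct analogue of the integrals of Section~1. In particular the local functional equation defining $\gamma(s,\pi\times\eta,\psi)$ and $\gamma(s,\pi\times\tau,\psi)$ for $k=1,2$ is already available, together with the Weil representation formulas needed to realize the integrals.

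First I would fix a uniformizer $p_F$ of $F$ and construct Howe vectors $\{v_m\}_{m\ge C}$ and the partial Bessel functions $W_{v_m}$ for a $\psi_U$-generic $\pi$ (resp.\ a genuine one for $\widetilde{\Sp}_4$), taking the congruence subgroup $K_m=(1+\Mat_{4\times4}(\CP_F^m))\cap\Sp_4(F)$ and conjugating by $d_m=\bt(p_F^{-3m},p_F^{-m})$. (For the metaplectic cover the same $K_m$ splits for $m$ sufficiently large, so $v_m$ is well defined as a genuine vector.) The analogues of Lemma~\ref{lemma21}, Lemma~\ref{lemma22} and Corollary~\ref{cor23} go through verbatim; the role of the unramified norm $\Nm_{E/F}(1+\CP_E^m)=1+\CP_F^m$ is played here by the identity $(1+\CP_F^m)^2=1+\CP_F^m$, valid when the residue characteristic is odd, which produces the analogue $a_2\in\{\pm1\}(1+\CP_F^m)$ in Corollary~\ref{cor23}. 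Combined with equality of central characters this gives $W_{v_m}(b)=W_{v'_m}(b)$ for all $b\in B$ (Corollary~\ref{cor24} analogue).

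Next I would establish the stability property of partial Bessel functions (Proposition~\ref{prop25}), which is a purely Weyl--group/Bruhat statement; since the Weyl group of $\Sp_4$ is isomorphic to that of $\RU(2,2)$, the same inductive Chevalley--relation argument and Claim $(*)$ carry over word for word, with $\alpha,\beta,w_0,w_1,w_2$ replaced by the corresponding elements in type $C_2$. The section-construction of Section~3 for $I(s,\eta)$ and $I(s,\tau)$ and the computations of Lemma~\ref{lemma32} and Proposition~\ref{prop33} likewise carry over, as they are local and rely only on the Bruhat decomposition and on the explicit form of the intertwining operator.

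Then I would run the twisting arguments of Sections~4 and 5. Twisting by quasi-characters of $F^\times$ (Proposition~\ref{prop41}) yields $W_{v_m}(t\dot w_2)=W_{v'_m}(t\dot w_2)$ for all $t\in T$; combined with the stability property this upgrades to equality on $B\dot w_1B$ off the small cell (Proposition~\ref{prop42}). Twisting by irreducible representations $\tau$ of $\GL_2(F)$ and applying Jacquet--Shalika's Proposition~\ref{prop53} gives equality on $T\dot w_1$ and $T\dot w_0$, at which point Proposition~\ref{prop25} forces $W_{v_m}=W_{v'_m}$ on all of $\Sp_4(F)$; uniqueness of the Whittaker model then yields $\pi\cong\pi'$. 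For $\widetilde{\Sp}_4(F)$ the argument is identical once one works consistently with genuine Whittaker functions and with the genuine splitting $s_\mu$; the Weil representations appearing in the zeta integrals are of the same shape as in Section~1.

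The main obstacle is the step corresponding to Proposition~\ref{prop42}, where one needs to know that equality of $W_{v_m}(t\dot w_2)$ on the torus propagates through the full $B\dot w_1B$ cell modulo $N_m$. This relies on the Chevalley relations and on a delicate bookkeeping of the Howe levels $4^{a_t}C$, and although the argument is structurally the same as in the unitary case, one must redo the computation of the commutators of $\bx_\alpha$ with the unipotent root subgroups in type $C_2$, and verify that the relation $a_2^2=1\bmod\CP_F^m\Rightarrow a_2\in\{\pm1\}(1+\CP_F^m)$ used repeatedly in lieu of the norm map indeed survives (this is where the hypothesis of odd residue characteristic is essential, exactly as in Section~7.1). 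A secondary nuisance, for $\widetilde{\Sp}_4$, is to check that the splitting of $K_m$ and the genuine central character behave well enough to carry the argument on the center, but since $\psi$ is fixed this reduces to the well-known splitting results used in \cite{Ka}.
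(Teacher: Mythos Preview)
Your proposal is correct and follows exactly the approach the paper itself indicates: the paper does not give a separate proof but simply states that the local zeta integrals for $\Sp_4$ and $\widetilde{\Sp}_4$ from \cite{Ka} are defined in the same manner as the $\RU_{E/F}(2,2)$ case, so that ``with similar argument'' the local converse theorem follows. Your outline fleshes out precisely that similar argument, and you have correctly identified the one place where the analogy requires care---replacing the unramified norm identity $\Nm_{E/F}(1+\CP_E^m)=1+\CP_F^m$ by the squaring identity $(1+\CP_F^m)^2=1+\CP_F^m$, valid for odd residue characteristic, so that $a_2^2\in 1+\CP_F^m$ forces $a_2\in\{\pm1\}(1+\CP_F^m)$ and the central character argument of Corollary~\ref{cor24} goes through with the center $\{\pm I_4\}$ of $\Sp_4$. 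One small remark: the analogue of equation~(\ref{eq41}) in Proposition~\ref{prop41} is not merely parallel but is in fact already established in \cite{Zh2} for $\Sp_{2n}$ and $\widetilde{\Sp}_{2n}$ (the paper explicitly cites Eq.~(4.4) of \cite{Zh2}), so that step requires even less adaptation than you suggest.
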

\begin{rem}\upshape We remark that, using the local Langlands conjecture for $\Sp_4(F)$ which is now known by the work of Gan-Takeda \cite{GT1,GT2}, and the recently proved Jacquet's conjecture for local converse theorem for $\GL_5$ \cite{ALSX, JLiu, Chai}, the local converse theorem for $\Sp_4(F)$ is now known without any restriction on $F$ and the central character. In fact, given an arbitrary $p$-adic field $F$, suppose that we have two $\psi_U$-generic irreducible representations $\pi,\pi'$ of $\Sp_4(F)$ such that 
$$\gamma(s,\pi\times \eta,\psi)=\gamma(s,\pi'\times \eta,\psi), \textrm{ and } \gamma(s,\pi\times \tau, \psi)=\gamma(s,\pi'\times \tau,\psi),$$
for all quasi-characters $\eta$ of $F^\times$ and all irreducible smooth representations $\tau$ of $\GL_2(F)$.
Using the local Langlands correspondence for $\Sp_4$, we get two Langlands parameters $\phi,\phi': WD(F)\ra \SO_5(\BC)\subset \GL_5(\BC)$. Now apply the local Langlands correspondence for $\GL_n$ \cite{HT, He}, we get two irreducible representations $\sigma, \sigma'$ of $\GL_5(\BC)$. In each step $\pi\mapsto \phi\mapsto \sigma$, the gamma factors are preserved. Thus we get
$$\gamma(s,\sigma\times \eta,\psi)=\gamma(s,\sigma'\times \eta,\psi), \textrm{ and } \gamma(s,\sigma\times \tau, \psi)=\gamma(s,\sigma'\times \tau,\psi),$$
for all quasi-characters $\eta$ of $F^\times$ and all irreducible smooth representations $\tau$ of $\GL_2(F)$. From the Jacquet's conjecture for the local converse problem for $p$-adic $\GL_n$, which is recently proved for prime $n$ in \cite{ALSX} and for general $n$ in \cite{JLiu, Chai}, we get $\sigma\cong \sigma'$. Since the local Langlands correspondence for $\GL_n$ is bijective, we get $\phi=\phi'$ (up to equivalence). Thus we get $\pi$ and $\pi'$ are in the same $L$-packet, say $\Pi_\varphi$. In \cite{GT2}, it is shown that in each $L$-packet, there is at most one $\psi_U$-generic representation. Thus we get $\pi\cong \pi'$.
\end{rem}

\end{document}